\documentclass[reqno,11pt]{amsart}
\usepackage{amsmath,amsthm,amssymb,amsfonts,color}

\newtheorem{theorem}{Theorem}[section]
\newtheorem{corollary}[theorem]{Corollary}
\newtheorem{lemma}[theorem]{Lemma}

\newtheorem{remark}[theorem]{Remark}
\newtheorem{definition}[theorem]{Definition}

\newtheorem{proposition}[theorem]{Proposition}
\newtheorem{example}[theorem]{Example}

\textwidth=16cm
\oddsidemargin=0cm 
\evensidemargin=0cm
\footskip=35pt 
\parindent=20pt

\numberwithin{equation}{section}

\def\H{\mathcal{H}}
\def\R{\mathbb R}
\def\N{\mathbb N}

\def\F{\mathcal F_{\a,Q}}
\def\G{\mathcal G_{\a,Q}}
\def\I{\mathcal I}
\def\Q{\mathcal{I}_\alpha}
\def\Il{{\mathcal I_{\rm log}}}
\def\lo{{\rm log}}
\def\P{P}
\def\M{\mathcal M}
\def\Z{\mathbb Z}
\def\eps{\varepsilon}
\def\e{\varepsilon}
\def\a{\alpha}

\def\vphi{\varphi}
\def\p{\varphi}
\renewcommand{\phi}{\varphi}
\def\pa{\partial}
\def\g{\gamma}
\def\O{\Omega}
\def\ov{\overline}

\def\bal{\begin{aligned}}
\def\eal{\end{aligned}}
\def\Om{\Omega}

\def\spt{\mathrm{spt}}
\def\Hd{\H^d}
\def\Hdm{\H^{d-1}}
\def\dP{\delta\!\P}
\newcommand\LinfE[1]{{\|#1\|}_{\stackrel{}{L^{\infty}(\partial E)}}}
\newcommand\LinfB[1]{{\|#1\|}_{\stackrel{}{L^{\infty}(\partial B)}}}

\newcommand\WinfB[1]{{\|#1\|}_{\stackrel{}{W^{1,\infty}(\partial B)}}}
\def\noi{}
\def\Kd{\mathcal{K}_\delta}
\def\Kdco{\mathcal{K}_\delta^{co}}
\def\diam{\mathrm{diam}}
\newcommand\WinfBd[1]{{\|#1\|}_{\stackrel{}{W^{1,\infty}(\partial B_{1/2})}}}
\def\ovE{\overline{E}}

\def\lt{\left}
\def\rt{\right}
\newcommand{\res}{\mathop{\hbox{\vrule height 7pt width.5pt depth 0pt\vrule height .5pt width 6pt depth
0pt}}\nolimits}

\def\ds{\displaystyle}
\title[Charged drops]{Existence and stability for a non-local isoperimetric model of charged liquid drops}

\author{Michael Goldman}
\address{Max-Planck-Institut f\"ur Mathematik,
Inselstrasse 22, 04103 Leipzig, Germany}
\email{goldman@mis.mpg.de}

\author{Matteo Novaga}
\address{Department of Mathematics, University of Pisa,
Largo Bruno Pontecorvo 5, 56127 Pisa, Italy}
\email{novaga@dm.unipi.it}

\author{Berardo Ruffini}
\address{Scuola Normale Superiore di Pisa, Piazza dei Cavalieri 4, 56126 Pisa, Italy}
\email{berardo.ruffini@sns.it}

\begin{document}

\begin{abstract}
We consider a variational problem related to the shape of charged liquid drops at equilibrium. 
We show that this problem never admits local minimizers with respect to $L^1$ perturbations preserving the volume. 
 However, we prove that the ball is stable under small $C^{1,1}$ perturbations when the charge is small enough. 
\end{abstract}

\maketitle


\section{Introduction}\label{droplets1}

\noi In this paper we study an isoperimetric variational problem where the perimeter, which is local and attractive, competes with the Riesz potential energy,  
which is non-local and repulsive. More precisely, we denote   
\[
\Q(E):=\inf\left\{\int_{\R^d\times \R^d}\frac{d\mu(x)d\mu(y)}{|x-y|^\a}:\,\mu(E)=1\right\},
\]
where $\a\in(0,d)$ and $E$ is a compact subset of $\R^d$ and consider the functional 

\begin{equation}\label{funcintro}
\F(E)\,:=\,P(E)\, +\, Q^2 \Q(E)
\end{equation}
where $Q>0$  is a parameter and where $P(E)$ denotes the perimeter of $E$ (which corresponds to $\H^{d-1}(\partial E)$ if $E$ has smooth
boundary, see \cite{amfupa}). We are in particular interested in the questions of existence and characterization of stables sets under volume preserving perturbations.
It turns out that the answer to these questions depends crucially on the  regularity of the allowed perturbations. In fact, we prove that on the one hand,
 there are no local (or global) minimizers of \eqref{funcintro} under volume constraint in the $L^1$ or even Hausdorff topology. This implies that there are no 
sets which are stables under such  perturbations. On the other hand, we prove that for small enough charge $Q$, the ball is stable under small $C^{1,1}$ perturbations.
 This comes as a by-product of the global minimality of such a ball in the class of ``regular enough'' 
 sets. \\

 \subsection{Description of the model}
For $\a=d-2$, $\Q(E)$ corresponds to the Coulombic interaction energy and the functional \eqref{funcintro} can be 
thought as modeling the equilibrium shape of a charged droplet for which surface tension and electric forces compete. Such charged droplets have received considerable attention since 
the seminal work of  Lord Rayleigh \cite{rayleigh} and are by now widely used in applications such as electrospray
ionization, fuel injection and ink jet printing. 
Starting with the pioneering experiments of Zeleny \cite{zeleny}, the following scenario  emerged. For small
charge, a spheric drop remains stable but when the charge 
overcomes a critical threshold $Q_c$,
which depends on the volume of the drop and on the characteristic constants of the liquid (surface tension and 
dielectric constant), a symmetry breaking occurs. Typically, 
the drop deforms and quickly develops conical shaped singularities, ejecting a very thin liquid jet
\cite{taylor,delamora,fontfried}.
This jet carries very little mass but a large portion of the charge.
 This type of behavior has been since then observed in more details and in various experimental setups (see for instance \cite{AMDL,DAMHL}).
 We emphasize in particular on \cite{doyle,abbas}, where the disintegration of an evaporating drop is observed, since a model very similar to \eqref{funcintro}
 has been proposed in \cite{roth,castle} to explain these experiments. We should stress the fact that the study of the unstable regime, which is still very poorly 
understood both experimentally and mathematically (see for instance \cite{miksis,fontfried,fuscojulin}),
 is far outside the scope of this paper. We focus instead on the rather simple variational model \eqref{funcintro} which hopefully captures, 
at least for small charges, most of the characteristics of the system. However, the unconditional (in term of $Q$)
 non-linear instability of the ball that we obtain in contrast with numerical and experimental 
observations, indicates that something is still missing in this model. 
A challenging question is identifying the relevant physical effect which stabilizes a charged drop.

In some applications such as electrowetting \cite{mugele} it is more natural to impose  the {\em electric potential} 
$V_0$ (see Definition \ref{potential})  on the boundary of $E$
 rather than the total charge $Q$.  In that case the energy of a drop $E$ takes the form 
\begin{equation}\label{funcpot}
 P(E)-V_0^2C_2(E),
\end{equation}
where for a set $E\subset \R^d$ with $d\ge 3$,
\[
C_{2}(E):= \min\left\{ \int_{\R^d}|\nabla u|^2\,dx: u\in H^1_0(\R^d),\,\text{$u\ge1$ on $E$}\right\}, 
\]
is the  capacitary functional. Notice that since $\I_{d-2}(E)=C_2(E)^{-1}$ for compact sets (see Remark \ref{rmkcap}), 
the functionals \eqref{funcintro} and \eqref{funcpot} are qualitatively similar. The analogy is in fact deeper since both functionals give rise to the same Euler-Lagrange equation.

\subsection{Main results of the paper}
The first main result of the paper is that, when $\a\in(0,d-1)$, for every given charge and
volume, quite surprisingly the functional $\F$ has no
minimizer among subsets of $\R^d$ of this given volume. Indeed, it is more convenient to 
spread the excess charge into little drops far away from each other.  
Such result is
contained in the following theorem:
\begin{theorem}\label{nonexistRintro}
For every $\a\in (0,d-1)$, there holds
\[
\inf_{|E|=m}\F(E)=\left(\frac{m}{\omega_d}\right)^{\frac{d-1}{d}}\P(B).
\]
\end{theorem}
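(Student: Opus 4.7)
The lower bound is immediate: since $\Q(E)\geq 0$ by definition, the classical isoperimetric inequality gives
\[
\F(E)\geq P(E)\geq \left(\frac{m}{\omega_d}\right)^{\frac{d-1}{d}}P(B)
\]
for every $E$ with $|E|=m$, and this is already the target value. The content of the theorem lies entirely in producing a matching minimizing sequence.

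The guiding idea is to decouple the two parts of the energy. Since the admissible measure in the definition of $\Q(E)$ is only constrained by $\mu(E)=1$, I can park the whole unit charge on a far-away, diffuse part of $E$, while the bulk of the volume sits in a single ball of almost the right size. Concretely, for sequences $\eps_n\to 0$, $N_n\to\infty$, $L_n\to\infty$ to be tuned, take
\[
E_n = B^0_n \cup \bigcup_{i=1}^{N_n} B^i_n,
\]
where $B^0_n$ is a ball of volume $m-\eps_n$ and each satellite $B^i_n$ is a ball of volume $\eps_n/N_n$, hence of radius $r_n\sim(\eps_n/N_n)^{1/d}$, with all components placed at mutual distance at least $L_n$. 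Then $P(E_n)=P(B^0_n)+N_n\,P(B_{r_n})\to P(B_m)$ provided $N_n^{1/d}\eps_n^{(d-1)/d}\to 0$. As a competitor for $\Q(E_n)$ I use $\mu_n=\frac{1}{N_n}\sum_i\mu^i_n$, with $\mu^i_n$ the equilibrium probability measure on $B^i_n$; splitting the resulting double integral into diagonal and cross terms, using the scaling $\Q(B_r)\sim r^{-\alpha}$ on the former and $|x-y|\geq L_n$ on the latter, yields
\[
\Q(E_n)\;\leq\; \frac{C}{N_n}\,r_n^{-\alpha}\;+\;\frac{C'}{L_n^{\alpha}}\;\sim\; N_n^{(\alpha-d)/d}\,\eps_n^{-\alpha/d}\;+\;L_n^{-\alpha}.
\]

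Sending $L_n\to\infty$ kills the cross-term contribution, and the self-term tends to $0$ exactly when $N_n^{(d-\alpha)/d}\eps_n^{\alpha/d}\to\infty$. Writing $N_n=\eps_n^{-\beta}$, the perimeter condition becomes $\beta<d-1$ and the Riesz condition becomes $\beta>\alpha/(d-\alpha)$. These two ranges overlap precisely when $\alpha/(d-\alpha)<d-1$, i.e. when $\alpha<d-1$, which is our standing hypothesis. Any admissible $\beta$ then produces a sequence with $\F(E_n)\to\left(\frac{m}{\omega_d}\right)^{(d-1)/d}P(B)$, matching the lower bound.

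The only mildly delicate point is the quantitative upper bound $\Q(B_r)\leq C r^{-\alpha}$, but this is routine: it follows from the scaling identity $\Q(\lambda E)=\lambda^{-\alpha}\Q(E)$ and the finiteness of $\Q(B_1)$ for $\alpha<d$. The genuinely informative feature of the argument is that the range $\alpha<d-1$ is the sharp threshold for this construction, since it is exactly where the surface-tension cost of atomizing a fixed volume is cheaper than the electrostatic gain of dispersing the charge.
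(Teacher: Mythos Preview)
Your proof is correct and follows essentially the same route as the paper: a large uncharged ball carrying almost all the volume, together with $N$ small equally charged balls sent to infinity, with the scaling chosen so that both the extra perimeter and the Riesz self-interaction vanish. The paper parametrizes the small balls directly by $r_N=N^{-\beta}$ with $\beta\in(1/(d-1),1/\alpha)$, while you introduce the extra parameter $\eps_n$ and then set $N_n=\eps_n^{-\beta}$; after unwinding, your admissibility window $\alpha/(d-\alpha)<\beta<d-1$ is equivalent to the paper's and collapses to the same threshold $\alpha<d-1$.
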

\noindent
Ultimately, this comes from the fact that the perimeter is defined up to sets of Lebesgue measure zero while the
Riesz
potential energy is defined up to sets of zero capacity.
This phenomenon is further illustrated when considering the problem among sets which are contained in a fixed
bounded domain $\Omega$. In this case  we prove that the 
 isoperimetric problem and the charge minimizing
problem completely decouple.

\begin{theorem}\label{mainintro}
Let $\Omega$ be a   compact subset   of $\R^d$ with smooth boundary, and let 
$0<m<|\Omega|$. Let $E_0$ be a solution of the constrained isoperimetric problem
\begin{equation}\label{constrainedisointro}
\min \lt\{\P(E): E\subset \Omega,\ |E|=m\rt\}.
\end{equation}
Then, for $\a\in(0,d-1)$ and $Q>0$ we have
\begin{equation}\label{illintro}
\inf_{|E|=m,\, E\subset \Omega}\F(E)
=\P(E_0)+Q^2 \Q(\Omega).
\end{equation}
\end{theorem}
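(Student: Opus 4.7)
\emph{Lower bound.} For any admissible $E$ (so $E\subset\Om$ and $|E|=m$), the definition of $E_0$ gives $\P(E)\ge \P(E_0)$, and since $E\subset\Om$ every probability measure supported in $E$ is also admissible for $\Q(\Om)$, so $\Q(E)\ge \Q(\Om)$. Adding the two inequalities yields $\F(E)\ge \P(E_0)+Q^2\Q(\Om)$, so the content of the theorem is the matching upper bound.

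\emph{Upper bound.} I plan to build a recovery sequence by keeping the bulk of the mass as an almost-full copy of $E_0$, and spreading the residual small volume into a finely fragmented ``cloud'' that nearly realises the Riesz capacity of $\Om$. More precisely, fix $\de>0$; using density of smooth measures in the $\a$-energy (available since $\Q(\Om)<\infty$), choose a smooth probability density $f$ compactly supported in the interior of $\Om$ with $\iint f(x)f(y)|x-y|^{-\a}\,dxdy\le \Q(\Om)+\de$. Partition $\spt f$ by a cubic grid $\{Q_i\}$ of side $h$; set $m_i=\int_{Q_i}f$, and replace each block $f\mathbf{1}_{Q_i}\,dx$ by the uniform probability of mass $m_i$ carried by a ball $B_{r_h}(x_i)$ centered at the center $x_i$ of $Q_i$. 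Denote by $\mu_h$ the resulting probability measure and by $T_h\subset\Om$ its support, the disjoint union of those balls.

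The decisive parameter choice is $h^{d/\a}\ll r_h\ll h^{d/(d-1)}$, which is possible \emph{precisely} when $\a<d-1$. With such $r_h$, direct estimates give $|T_h|\les (r_h/h)^d\to 0$; $\P(T_h)\les h^{-d}r_h^{d-1}\to 0$; the self-energy of $\mu_h$ is $\les h^d r_h^{-\a}\to 0$; and the off-diagonal cross-energy of $\mu_h$ is a Riemann sum converging to $\iint f(x)f(y)|x-y|^{-\a}\,dxdy\le \Q(\Om)+\de$. Defining $E_n=E_0^{(n)}\cup T_{h_n}$ with $h_n\to 0$, where $E_0^{(n)}$ is obtained from $E_0$ by removing a small ball (located away from $T_{h_n}$) so that $|E_n|=m$, one has $E_n\subset\Om$, $|E_n|=m$, $\P(E_n)\le \P(E_0^{(n)})+\P(T_{h_n})\to \P(E_0)$, and $\Q(E_n)\le \Q(\mu_{h_n})\le \Q(\Om)+2\de$ for $n$ large since $T_{h_n}\subset E_n$. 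A diagonal argument in $\de\to 0$ closes the upper bound.

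\emph{Main obstacle.} The heart of the argument is the tuning of $r_h$ against $h$: the cloud must be fine enough in capacitary terms to essentially recover $\Q(\Om)$, yet coarse enough that both the total perimeter and the Coulomb-type self-interaction of its droplets vanish. The two antagonistic constraints $r_h\gg h^{d/\a}$ (needed for vanishing self-energy) and $r_h\ll h^{d/(d-1)}$ (needed for vanishing perimeter) admit a common solution only when $d/\a>d/(d-1)$, i.e.\ $\a<d-1$; this is exactly the standing hypothesis and mirrors the mechanism behind Theorem~\ref{nonexistRintro}. A secondary technical point is justifying the convergence of the cross-energy Riemann sum with the singular kernel $|x-y|^{-\a}$, but smoothness and compact support of $f$ make $f(x)f(y)|x-y|^{-\a}$ integrable on $\R^d\times\R^d$, so a truncation-plus-uniform-continuity argument on the off-diagonal part suffices.
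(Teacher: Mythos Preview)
Your proposal is correct and follows essentially the same strategy as the paper's proof (Theorem~\ref{main}): a lower bound by monotonicity, and an upper bound obtained by superimposing on (a slightly trimmed) $E_0$ a cloud of tiny balls carrying a discretised approximation of a near-optimal density for $\Q(\Omega)$, with the same decisive scaling window $h^{d/\a}\ll r_h\ll h^{d/(d-1)}$ (the paper writes this as $d/(d-1)<\beta<d/\a$ for $r\sim\lambda^\beta$). The only cosmetic differences are that the paper places the charge on the \emph{spheres} $\partial B_r(x_k)$ (so the same construction also yields the statement for $\G$), introduces an intermediate truncation scale $\delta$ to control the near-diagonal cross terms explicitly rather than invoking a Riemann-sum argument, and works with an $L^\infty$ density on all of $\Omega$ (via Proposition~\ref{density}) together with a small displacement of centres near $\partial\Omega$, instead of assuming the density is compactly supported in the interior.
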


As a by-product of our analysis we also get that $\F$ does not have local minimizers with respect to the $L^1$ or
even Hausdorff topology: 
\begin{theorem}\label{corunstable}
For any $\a\in (0,d-1)$ and $Q>0$, the functional $\F$ does not admit 
local volume-constrained minimizers with respect to the $L^1$ or the Hausdorff topology. 
\end{theorem}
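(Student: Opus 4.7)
The plan is to deduce the non-existence of local minimizers from Theorem \ref{mainintro} applied in a slight thickening of $E$, combined with an explicit perturbation that keeps the competitor $L^1$- and Hausdorff-close to $E$. We may assume $\F(E)<\infty$, since otherwise any close-by set of finite energy already strictly improves. Given $E$ with $|E|=m$ and a target precision $\eta>0$, the goal is to produce $F$ with $|F|=m$, $|E\triangle F|\le\eta$, $d_H(E,F)\le\eta$, and $\F(F)<\F(E)$.

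Set $\Omega:=\{x\in\R^d:d(x,E)\le\eta\}$, a compact neighborhood of $E$ with $|\Omega|>m$. By Theorem \ref{mainintro},
\[
\inf\{\F(G):G\subset\Omega,\,|G|=m\}=P(E_0)+Q^2\Q(\Omega),
\]
with $E_0$ a constrained isoperimetric minimizer in $\Omega$. Admissibility of $E$ gives $P(E_0)\le P(E)$, and since $\Omega\setminus E$ has positive Lebesgue (hence positive Riesz) measure, strict monotonicity of $\Q$ gives $\Q(\Omega)<\Q(E)$. Hence the infimum is strictly below $\F(E)$. To realize this strict gap with a set close to $E$, I would mimic the constructive $(\le)$ argument used to prove Theorem \ref{mainintro}: fix a small interior ball $A\subset E$ of volume $\delta$ and choose $D_\delta\subset\Omega\setminus E$ of the same volume, supplied by that proof, with $P(D_\delta)\to 0$ and $\Q(E\cup D_\delta)\to\Q(\Omega)$ as $\delta\to 0$. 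Setting $F_\delta:=(E\setminus A)\cup D_\delta$, one has $|F_\delta|=m$, $|E\triangle F_\delta|=2\delta$, $d_H(E,F_\delta)\le\max(\diam A,\eta)$, and, testing $\Q(F_\delta)$ against the equilibrium measure of $E\cup D_\delta$ (whose mass on $A$ is zero in the generic case),
\[
\F(F_\delta)\le P(E)+P(A)+P(D_\delta)+Q^2\Q(E\cup D_\delta)\longrightarrow P(E)+Q^2\Q(\Omega)<\F(E),
\]
so for small $\delta$ the set $F_\delta$ is a strictly better competitor $\eta$-close to $E$ in either topology.

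The main obstacle is the construction of the sets $D_\delta$: a vanishingly small volume must be distributed throughout $\Omega\setminus E$ in such a way that $\Q(E\cup D_\delta)\to\Q(\Omega)$ while $P(D_\delta)\to 0$. This is precisely the content of the $(\le)$ half of Theorem \ref{mainintro}, and the assumption $\a<d-1$ is essential: a cloud of many small balls of total volume $\delta$ contributes perimeter at scale $r^{d-1}$ per ball but capacity at scale $r^{\a}$ per ball, and only the strict gap $\a<d-1$ allows one to drive the perimeter to zero while saturating $\Q(\Omega)$. A secondary, mild technicality is producing a genuine interior ball $A\subset E$ of arbitrary small volume for an arbitrary finite-perimeter $E$; this is dispensed with either by a preliminary mollification of $E$ (itself a harmless perturbation in both topologies) or by absorbing the small equilibrium-measure mass on $A$ into the error term.
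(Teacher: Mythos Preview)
Your proposal is correct and follows essentially the same route as the paper: thicken $E$ to a neighborhood $\Omega$, invoke the construction behind Theorem~\ref{mainintro} (with $E$ playing the role of $E_0$) to build competitors $L^1$- and Hausdorff-close to $E$ with energy near $P(E)+Q^2\Q(\Omega)$, and conclude via the strict inequality $\Q(\Omega)<\Q(E)$. The paper's version is terser---it cites Lemma~\ref{F=G} for the strict inequality and implicitly reuses the explicit measure $\tilde\mu_\eps$ from Step~1 of the proof of Theorem~\ref{main} rather than the equilibrium measure of $E\cup D_\delta$, thereby sidestepping your ``mass on $A$'' technicality---but the strategy is the same.
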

Let us stress the fact that Theorem \ref{corunstable}, asserts in particular that there is \textit{never} non-linear stability of the ball. However,
 we should also notice that the competitors that we construct and which are made of infinitely small droplets, are very singular. It would be interesting to
better  understand  the mechanism preventing the formation of such micro drops.\\

\noi  
One possible explanation is that global (or even local) $L^1$ minimizers are
not the right objects to consider.
One should instead look for stable configurations under smoother deformations. These are typically
local minimizers for a stronger topology. 
It is then reasonable to look for minimizers of $\F$ in some smaller class
of sets with some extra regularity conditions. The class  that we take into consideration, and denote by $\Kd$, is
that of sets which admit at every point of their boundary an internal and an external tangent ball of a fixed radius
$\delta$  (namely, the $\delta$-ball condition, see Definition \ref{d-ball}).  We denote by $\Kdco$
the class of connected sets of $\Kd$. The purpose of introducing such a class is to prove the stability of the ball with respect to $C^{1,1}$ perturbations.
 There are indeed mainly two (mathematical) advantages of working in $\Kd$. The first, is that it ensures density 
estimates on the sets. These estimates are usually the most basic regularity results available for minimizers of minimal surfaces
 types of problems (see \cite{giusti,GN,LuOtto,KM2}). Thanks to the constructions of Theorem \ref{nonexistRintro},
 we see that in our problem there is no hope to get such estimates without imposing them \textit{a priori}.  The second advantage is  that, at least in the Coulombic case $\a=d-2$,
for  every set $E \in \Kd$, the minimizing measure for $\Q(E)$ is a uniformly bounded measure on  $\partial E$   (see the end of Section \ref{droplets2}). We use in a crucial way 
this $L^\infty$ control on the charge density in the analysis of the stability of the ball. Our second main result is then:
\begin{theorem}\label{stabilityball}
 Let $d\ge 3$ and $\a=d-2$. Then for any $\delta>0$ and $m\ge \omega_d \delta^d$, there exists a charge $\bar Q
\lt(\frac{\delta}{m^{1/d}}\rt)>0$,  such that if 
$$\frac{Q}{m^\frac{d-1+\a}{2d}}\,\le\,\bar Q \lt(\frac{\delta}{m^{1/d}}\rt)
$$
the ball is stable for problem \eqref{pbcnocointro} under volume preserving perturbations with $C^{1,1}$ norm less than $\delta$.  
\end{theorem}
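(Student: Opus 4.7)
The approach is a Fuglede-type argument: for small $Q$ the quantitative perimeter gain must beat the second-order loss in the Riesz term. By the scalings $P(\lambda E)=\lambda^{d-1}P(E)$ and $\Q(\lambda E)=\lambda^{-\a}\Q(E)$, it suffices to handle $m=\omega_d$, the general case being recovered with effective charge $Q/m^{(d-1+\a)/(2d)}$ and effective thickness $\delta/m^{1/d}$, which accounts for both arguments in the statement. Since $E\in\Kdco$ is a volume-preserving $C^{1,1}$-perturbation of $B$ of size $\le\delta$, $\partial E$ can be written as a radial graph $T(x)=(1+\phi(x))x$, $x\in\partial B$, with $\phi\in C^{1,1}(\partial B)$ small; after translating we may assume $\int_{\partial B}x\,\phi\,d\Hdm=0$, while the volume constraint gives $\int_{\partial B}\phi\,d\Hdm=O(\|\phi\|_{L^2}^2)$.

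For the perimeter I would invoke Fuglede's inequality for nearly spherical sets: there exist universal $\eps_0,c_0>0$ such that, under the above conditions and $\|\phi\|_{W^{1,\infty}}\le\eps_0$,
\[
P(E)-P(B)\,\ge\,c_0\,\|\phi\|_{H^1(\partial B)}^2,
\]
which follows from the spectral gap of the spherical Laplacian after killing the translational and volume modes.

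The heart of the proof is the matching bound on the Riesz loss,
\[
\Q(B)-\Q(E)\,\le\,C(\delta)\,\|\phi\|_{L^2(\partial B)}^2,
\]
for which the $L^\infty$ estimate on the equilibrium measure of sets in $\Kd$ recalled at the end of Section~\ref{droplets2} is essential. Let $\mu_E$ be the minimizer of $\Q(E)$; it is concentrated on $\partial E$ with density bounded by $C(\delta)$. Pulling $\mu_E$ back to $\partial B$ through $T^{-1}$ produces a probability measure $\nu$ admissible for $\Q(B)$, so
\[
\Q(B)-\Q(E)\,\le\,\int\!\!\int\Bigl(\frac{1}{|x-y|^\a}-\frac{1}{|T(x)-T(y)|^\a}\Bigr)d\nu(x)\,d\nu(y).
\]
I would Taylor-expand the kernel in $\phi$. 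The first-order contribution reduces after symmetrization to $\int\phi(x)\,x\cdot\nabla V_\nu(x)\,d\nu(x)$, with $V_\nu(x)=\int|x-y|^{-\a}d\nu(y)$; since $V_{\mu_B}$ is constant on $\partial B$ by the Euler-Lagrange condition for the ball, $\nabla V_{\mu_B}$ is purely radial there, and this term collapses, modulo $O(\|\phi\|_{L^2}^2)$, to a multiple of $\int\phi\,d\Hdm$, itself $O(\|\phi\|_{L^2}^2)$ by the volume constraint. The quadratic remainder is dominated by the convergent singular integral $\int\!\!\int|\phi(x)-\phi(y)|^2|x-y|^{-\a-2}d\Hdm(x)d\Hdm(y)$, which is controlled by $C(\delta)\|\phi\|_{L^2}^2$ thanks to the density bound on $\nu$.

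Combining the two estimates yields
\[
\F(E)-\F(B)\,\ge\,\bigl(c_0-Q^2C(\delta)\bigr)\|\phi\|_{H^1(\partial B)}^2,
\]
nonnegative as soon as $Q^2\le c_0/C(\delta)=:\bar Q(\delta)^2$; restoring the scaling produces the quantitative threshold in the theorem. The main obstacle is the Riesz estimate: the singular kernel forces one to exploit both the cancellation of the first variation (via the Euler-Lagrange condition together with the volume constraint) and the $L^\infty$ bound on $\mu_E$, which is the concrete pay-off of working in $\Kd$ rather than in a rougher class.
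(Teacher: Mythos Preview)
Your overall strategy---reduce by scaling to $m=\omega_d$, use Fuglede's lower bound on $\dP(E)$, and bound $\Q(B)-\Q(E)$ by a constant times $\dP(E)$ after pulling the optimal measure $\mu_E$ back to $\partial B$---is exactly the paper's route (Proposition~\ref{mainstab} fed into Theorem~\ref{stabilitycore}). The genuine gap is your treatment of the first-order term in the Riesz expansion.

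Write $d\nu=g\,d\Hdm\res\partial B$ for the pull-back of $\mu_E$; the first-order contribution you isolate is (a constant times) $\I_\a^{\partial B}(g,g\phi)$. Your argument for making this $O(\|\phi\|_{L^2}^2)$ replaces $V_\nu$ by $V_{\mu_B}$ and then $d\nu$ by $d\mu_B$, invoking the Euler--Lagrange condition for the \emph{ball}. But nothing you have written controls the deviation $g-\bar g$ (equivalently $\nu-\mu_B$) in terms of $\phi$: Proposition~\ref{regolarita} gives only $\|g\|_{L^\infty}\le C(\delta)$, not smallness. After splitting $g=\bar g+(g-\bar g)$, the dangerous pieces are cross terms such as $\I_\a^{\partial B}(\bar g,(g-\bar g)\phi)$ and $\bar g\int_{\partial B}(g-\bar g)\,\phi\,d\Hdm$, which are genuinely first order in $\phi$ and are not killed by the volume constraint or by the Euler--Lagrange condition for $B$.

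The paper's mechanism (proof of Proposition~\ref{mainstab}) does not rely on any smallness of $g-\bar g$. One keeps, on the favorable side, the \emph{positive} zeroth-order term $\I_\a^{\partial B}(g-\bar g)$ coming from $\Q(\nu)=\I_\a^{\partial B}(\bar g)+\I_\a^{\partial B}(g-\bar g)$, and uses it via Cauchy--Schwarz/Young to absorb several cross terms. The remaining hard cross term is estimated through the Fourier transform as
\[
\I_\a^{\partial B}\bigl(\bar g,(g-\bar g)\phi\bigr)\ \le\ C\,\bar g\,\|\phi\|_{H^{(d-\a)/2}(\partial B)}\,\I_\a^{\partial B}(g-\bar g)^{1/2},
\]
after which a dichotomy (either the right-hand side is $\le\frac12\I_\a^{\partial B}(g-\bar g)$ and is absorbed, or else $\I_\a^{\partial B}(g-\bar g)^{1/2}\lesssim\|\phi\|_{H^{(d-\a)/2}}$) together with the interpolation bound $\|\phi\|_{H^{(d-\a)/2}}^2\le C\,\dP(E)$ closes the argument. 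This interaction between $\phi$ and the unknown oscillation of $g$ is the technical heart of the proof and is missing from your sketch.

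A minor point: the quadratic remainder is not $\le C(\delta)\|\phi\|_{L^2}^2$. The singular integral $\int\!\!\int|\phi(x)-\phi(y)|^2|x-y|^{-\a-2}\,d\Hdm\,d\Hdm$ is, for $\a=d-2$, the $H^{1/2}(\partial B)$ seminorm of $\phi$, not its $L^2$ norm. It is, however, bounded by $C\|\nabla\phi\|_{L^2}^2\le C\,\dP(E)$ (this is the computation at the end of Lemma~\ref{shape}), which is all that the final inequality requires.
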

 This extends a previous 
result of M.A.~Fontelos and A.~Friedman 
\cite{fontfried}, which asserts the  stability with respect to  $C^{2,\alpha}$ perturbations. These authors also gave
 a detailed analysis of the linear stability. We remark that our proof of 
the stability of the ball is 
quite different from the one in \cite{fontfried}, and is inspired by the proofs in \cite{fuglede,KM2,cicaspa}. In particular it lies between linear and non-linear stability since it follows from the following three theorems
 asserting that for small charge $Q$, the ball is the unique minimizer in the class $\Kd$.

%
The first result is an existence theorem in the class $\Kdco$.
\begin{theorem}\label{mainexistenceintro}
For all $Q\ge 0$ problem

\begin{equation}\label{pbcintro}
\min\left\{\F(E):\,\,|E|=m,\,\, E\in\Kdco\,\right\},
\end{equation}
has a solution.
\end{theorem}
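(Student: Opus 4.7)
The plan is to apply the direct method of the calculus of variations. I fix a minimizing sequence $(E_n)\subset\Kdco$ with $|E_n|=m$, and, using the translation invariance of both $P$ and $\Q$, I translate each $E_n$ so that it contains the origin. The first key step is a uniform diameter bound: connectedness together with the internal $\delta$-ball condition forces $\diam(E_n)\le C(d)\,m/\delta^{d-1}$, because along any path in $E_n$ joining two points at maximal distance one can pack a chain of inscribed balls of radius $\delta$ whose total volume is bounded by $m$. Hence all $E_n$ sit in a common closed ball $B_R$, and Blaschke's selection principle yields, up to subsequence, a Hausdorff limit $E\subset B_R$.

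Next I verify that $E\in\Kdco$ and that the functional passes to the limit. The $\delta$-ball condition is closed under Hausdorff convergence: for each $x\in\partial E$ one selects $x_n\in\partial E_n$ converging to $x$, and the centers of the associated internal and external tangent balls converge, up to subsequence, to tangent balls at $x$. Connectedness is likewise preserved under Hausdorff limits of connected compact sets in $B_R$. The $\delta$-ball condition provides a uniform $C^{1,1}$ control on $\partial E_n$, which upgrades Hausdorff convergence to $C^{1,\alpha}$ convergence of the boundaries and $L^1$ convergence of the characteristic functions; this yields $|E|=m$ and $P(E_n)\to P(E)$. For the non-local term, let $\mu_n$ be optimal for $\Q(E_n)$. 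Since $\spt\mu_n\subset E_n\subset B_R$ and $\mu_n(\R^d)=1$, I extract a weak-$*$ limit $\mu_n\weakto\mu$, and Hausdorff convergence of $E_n$ to $E$ forces $\spt\mu\subset E$ with $\mu(E)=1$. As the Riesz kernel $|x-y|^{-\a}$ is non-negative and lower semicontinuous, a standard truncation ($k_M:=\min(|x-y|^{-\a},M)$, pass to the limit with $k_M$ continuous and bounded, then $M\to\infty$ by monotone convergence) yields
\[
\Q(E)\,\le\,\int_{\R^d\times\R^d}\frac{d\mu(x)d\mu(y)}{|x-y|^\a}\,\le\,\liminf_n\int_{\R^d\times\R^d}\frac{d\mu_n(x)d\mu_n(y)}{|x-y|^\a}\,=\,\liminf_n\Q(E_n).
\]
Combined with the continuity of $P$, this gives $\F(E)\le\liminf_n\F(E_n)$, so $E$ is a minimizer in $\Kdco$.

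The main obstacle is precisely the uniform diameter bound, and this is the reason connectedness cannot be dropped: Theorem \ref{nonexistRintro} shows that, without it, a minimizing sequence can escape to infinity by splitting into many small droplets so that no mass-preserving Hausdorff limit exists. The $\delta$-ball condition plays a double role here, both preventing escape at infinity (via the diameter bound coming from inscribed balls) and preserving enough regularity in the limit that the capacitary measures are well behaved. Once these two ingredients are secured, the lower semicontinuity of the Riesz energy under weak-$*$ convergence of probability measures closes the argument.
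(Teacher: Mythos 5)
Your argument is correct and follows essentially the same route as the paper's proof of Theorem~\ref{mainexistence}: direct method, a uniform diameter bound from the internal $\delta$-ball condition plus connectedness, compactness in a bounded region, stability of $\Kdco$ under Hausdorff convergence, and lower semicontinuity of the two terms with $\spt\mu\subset E$ coming from the Hausdorff convergence of the sets. The only substantive differences are cosmetic. The paper first extracts an $L^1$-limit via BV compactness (the uniform perimeter bound comes from comparing with the ball) and then upgrades to Hausdorff convergence using the interior density estimates furnished by the $\delta$-ball condition, whereas you start from Blaschke selection and then descend to $L^1$; both work, but the paper's order avoids having to argue for convergence of $|E_n|$ separately. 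Likewise, you invoke $C^{1,\alpha}$ convergence of $\partial E_n$ to get \emph{continuity} $P(E_n)\to P(E)$; this is true given the uniform $C^{1,1}$ control, but it is more than needed and is the least immediate step in your sketch — lower semicontinuity of $P$ under $L^1$ convergence, which is what the paper uses, already closes the argument. Finally, your ``chain of inscribed $\delta$-balls along a path'' diameter bound is in the same spirit as, and gives the same scaling as, the paper's tiling argument in Lemma~\ref{bound}; to make it rigorous you need the observation that every point of $E$ lies in some internal tangent $\delta$-ball, and you must space the chosen points by $\ge 4\delta$ (say) so the corresponding balls are pairwise disjoint.
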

%
To avoid the strong hypothesis on the connectedness of the competitors, it is necessary to impose a bound from
above on the charge $Q$.
\begin{theorem}\label{mainexistence3intro}
There exists a constant $Q_0=Q_0(\a,d)$ such that, for every $\delta>0$, $m\ge \omega_d\delta^d$ and
$$
\frac{Q}{m^\frac{d-1+\a}{2d}}\,\le\, Q_0\, \frac{\delta^d}{ m}\,, 
$$
problem 
\begin{equation}\label{pbcnocointro}
\min\left\{\F(E):\,\,|E|=m,\,\, E\in\Kd\,\right\},
\end{equation} 
has a solution.
\end{theorem}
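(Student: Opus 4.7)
The plan is to combine a concentration-compactness extraction with the smallness assumption on $Q$ to produce a minimizer in $\Kd$. Let $\{E_n\}\subset\Kd$ with $|E_n|=m$ be a minimizing sequence. Since the interior tangent ball of radius $\delta$ at any boundary point lies inside $E_n$, every connected component of $E_n$ contains a ball of radius $\delta$ and hence has volume at least $\omega_d\delta^d$; the number of components is therefore bounded by $N:=\lfloor m/(\omega_d\delta^d)\rfloor$. The boundedness of $\F(E_n)$ combined with the uniform curvature bound $1/\delta$ also yields a uniform diameter bound on each connected component (essentially a ``thick curve'' of fixed thickness cannot be long without paying in perimeter).

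After translating each component and passing to subsequences, the components of $E_n$ partition into $k\le N$ clusters whose pairwise distances diverge, and each cluster converges in Hausdorff distance to a limit $E^j\in\Kd$ with $m^j:=|E^j|>0$ and $\sum_j m^j=m$. The perimeter is lower semicontinuous, so $\liminf_n P(E_n)\ge\sum_j P(E^j)$; as the clusters separate, the pairwise Riesz interactions vanish while the optimal unit charge redistributes harmonically across clusters, giving
\[
\lim_n \Q(E_n)=\frac{1}{\sum_j 1/\Q(E^j)}.
\]
If $k=1$, then $E^1\in\Kd$ has volume $m$ and is the required minimizer.

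If $k\ge 2$, I would rule this out by comparing with the ball $B$ of volume $m$, which is admissible in $\Kd$ since $m\ge\omega_d\delta^d$. Applying the classical isoperimetric inequality and the Riesz analogue $\Q(F)\ge\Q(B_{|F|})$ componentwise replaces the split limit by the all-balls lower bound
\[
\sum_j P(B_{m^j})+\frac{Q^2}{\sum_j 1/\Q(B_{m^j})}.
\]
Using the concavity of $t\mapsto t^{(d-1)/d}$ and the lower bound $m^j\ge\omega_d\delta^d$, the perimeter surplus over $P(B)$ is at least of order $P(B_\delta)\asymp\delta^{d-1}$, the worst case being two clusters, one holding almost all the mass and the other carrying the minimum volume $\omega_d\delta^d$; for that same configuration the Riesz deficit is $\Q(B)(\omega_d\delta^d/m)^{\a/d}\asymp\delta^{\a}m^{-2\a/d}$. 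A direct computation shows that the hypothesis $Q/m^{(d-1+\a)/(2d)}\le Q_0\,\delta^d/m$, with $Q_0=Q_0(\a,d)$ sufficiently small, is exactly tailored to force the perimeter surplus to dominate $Q^2$ times the Riesz deficit uniformly over all admissible splittings, yielding $\F(B)<\liminf_n\F(E_n)$ and contradicting the minimality of $\{E_n\}$.

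The principal obstacle is the concentration-compactness step, and in particular the derivation of the harmonic-sum asymptotics for $\Q$: one must show that as the clusters separate the optimal Riesz measure on $E_n$ decomposes across the clusters with asymptotic masses $(1/\Q(E^j))/\sum_i 1/\Q(E^i)$, and that all cross-interactions vanish uniformly in this limiting distribution. This is where the $\delta$-ball condition plays an essential role: it guarantees Hausdorff compactness within $\Kd$, ensures that no mass is lost in the limit (each cluster has volume bounded below by $\omega_d\delta^d$), and, in the Coulombic case, provides the uniform boundedness of the minimizing surface measure needed to pass to the limit in $\Q$ under Hausdorff convergence.
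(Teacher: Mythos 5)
Your overall strategy---a concentration-compactness splitting followed by an energy comparison against the ball---is a genuinely different route from the paper's. The paper normalizes to $m=\omega_d$, observes that any minimizing sequence $E_n\in\Kd$ satisfies $\dP(E_n)\le Q^2\Q(B)$, and then invokes the \emph{quantitative} isoperimetric inequality of Fusco--Maggi--Pratelli to conclude $|E_n\Delta B|\le CQ$ after translation; since each connected component of a set in $\Kd$ contains a ball of radius $\delta$ and hence has volume at least $\omega_d\delta^d$, the smallness of $Q$ relative to $\delta^d$ forces $E_n$ to be connected, after which existence follows from the compactness argument already available for $\Kdco$ (Theorem \ref{mainexistenceintro}). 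This bypasses both the cluster decomposition of the minimizing sequence and the identification of the limiting Riesz energy as a harmonic mean of the cluster energies---precisely the step you flag, correctly, as the main obstacle.

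Two concrete problems in your write-up. First, the inequality $\Q(F)\ge\Q(B_{|F|})$ goes the wrong way: by Remark \ref{pallamax} (citing Betsakos) the ball \emph{maximizes} the Riesz potential energy among compact sets of given volume, so $\Q(F)\le\Q(B_{|F|})$, and the harmonic mean $\bigl(\sum_j 1/\Q(E^j)\bigr)^{-1}$ is therefore bounded \emph{above}, not below, by the all-balls expression. This is fixable---since you only need a lower bound on the split energy you may simply discard the Riesz term altogether: the perimeter surplus from two or more pieces each of volume $\ge\omega_d\delta^d$ is of order $\delta^{d-1}$, which dominates $Q^2\Q(B)$ whenever $Q\lesssim\delta^{(d-1)/2}$, a condition implied by $Q\lesssim\delta^d$ because $\delta\le 1$. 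Second, and more seriously, the harmonic-sum asymptotics and the underlying decomposition of a $\Kd$-minimizing sequence into clusters that diverge and converge in Hausdorff distance, with no loss of mass, are asserted but not established; you also appeal to the $L^\infty$ bound on the optimal measure, which Proposition \ref{regolarita} only provides in the Coulombic case $\a=d-2$, whereas the theorem covers all $\a\in(0,d-1)$. The paper's route through the quantitative isoperimetric inequality is substantially shorter precisely because it needs none of this structure theory and works uniformly in $\a$.
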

It is worth remarking that the main ingredient of the proof of Theorem \ref{mainexistence3intro} is the
isoperimetric inequality in quantitative form (see \cite{FMP,fimapr,cicaleseleonardi}). Finally, using delicate estimates on the Riesz potential energy $\Q(E)$ for small perturbations of the ball, we are able to prove the following stability theorem in the Coulombic case. 

\begin{theorem}\label{stability}
Let $d\ge 3$ and $\a=d-2$. Then for any $\delta>0$ and $m\ge \omega_d \delta^d$, there exists a charge $\bar Q
\lt(\frac{\delta}{m^{1/d}}\rt)>0$,  such that if 
$$\frac{Q}{m^\frac{d-1+\a}{2d}}\,\le\,\bar Q \lt(\frac{\delta}{m^{1/d}}\rt)
$$
the ball is the
unique minimizer of problem \eqref{pbcnocointro}.  
\end{theorem}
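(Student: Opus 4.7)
The plan is to combine the existence result of Theorem \ref{mainexistence3intro} with a Fuglede-type stability analysis at the unit ball. By scaling invariance it suffices to treat the case $m=\omega_d$, so that $B$ denotes the unit ball. Let $E\in\Kd$ be a minimizer provided by Theorem \ref{mainexistence3intro}. First I would prove that, up to translation, $E$ is close to $B$. Testing the minimality of $E$ against $B$ and using that in the Coulombic case $\Q(E)=1/C_2(E)$ is maximized by the ball at fixed volume (via P\'olya--Szeg\H{o} applied to the capacitary potential, since capacity is minimized by the ball), gives
\[
0\,\le\,P(E)-P(B)\,\le\,Q^2\bigl(\Q(B)-\Q(E)\bigr)\,\le\,Q^2\,\Q(B).
\]
Since $\Q(B)$ is a universal constant, the quantitative isoperimetric inequality of Fusco--Maggi--Pratelli \cite{FMP} yields, after an appropriate translation,
\[
|E\,\triangle\,B|^2\,\le\,C\bigl(P(E)-P(B)\bigr)\,\le\,C\,Q^2.
\]
Coupled with the $\delta$-ball condition defining $\Kd$ and elementary regularity arguments, for $Q$ small enough (depending on $\delta$) we may write $\partial E=\{(1+h(x))x:x\in\partial B\}$ with $\|h\|_{C^{1,\beta}(\partial B)}$ arbitrarily small.

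The core of the argument is then a second-order expansion of both terms in the height function $h$. The volume constraint forces the zero spherical harmonic of $h$ to be of order $\|h\|_{L^\infty}\|h\|_{L^2}^2$, and after a further translation the first harmonic can be made of the same order. A Fuglede-type computation \cite{fuglede} then yields
\[
P(E)-P(B)\,\ge\,c_0\,\|h\|_{H^1(\partial B)}^2\,-\,o(1)\,\|h\|_{H^1(\partial B)}^2,
\]
where $o(1)\to 0$ as $\|h\|_{C^1}\to 0$. For the Riesz term, using that the equilibrium measure of $B$ is the uniform surface measure on $\partial B$, one pushes it forward through $x\mapsto (1+h(x))x$ to obtain a competitor for $\Q(E)$ and expands the resulting double integral to second order in $h$. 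The target, and in my view the main technical obstacle, is a bound of the form
\[
\Q(B)-\Q(E)\,\le\,C_1\,\|h\|_{H^{1/2}(\partial B)}^2\,\le\,C_1\,\|h\|_{H^1(\partial B)}^2,
\]
in a norm weak enough to be absorbed by the $H^1$ coercivity produced by the perimeter. This would exploit simultaneously the $L^\infty$ bound on the equilibrium densities of sets in $\Kd$ (recalled at the end of Section \ref{droplets2}) and delicate cancellations in the nonlocal kernel $|x-y|^{-\a}$.

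Combining the two estimates yields
\[
(c_0-o(1))\,\|h\|_{H^1(\partial B)}^2\,\le\,C_1\,Q^2\,\|h\|_{H^1(\partial B)}^2,
\]
so that if $Q^2\le c_0/(2C_1)$ and $\|h\|_{C^1}$ is small enough, we must have $h\equiv 0$, i.e. $E=B$ up to translation. The threshold $\bar Q(\delta/m^{1/d})$ is then defined as the smallest among the thresholds coming from Theorem \ref{mainexistence3intro} (for existence in $\Kd$), from the $C^{1,\beta}$-graph reduction of Step 1, and from the last quadratic comparison of Step 2. The hardest ingredient will be the quantitative Riesz estimate in Step 2: the nonlocality of $\Q$ rules out any pointwise analysis of the density, and one needs to work at the level of fractional Sobolev norms on $\partial B$ in order to match the Dirichlet coercivity produced by the perimeter.
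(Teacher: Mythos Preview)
Your overall plan coincides with the paper's: reduce to $m=\omega_d$, invoke Theorem~\ref{mainexistence3intro} for existence, show via the quantitative isoperimetric inequality and the $\delta$-ball condition that the minimizer is a nearly spherical graph over $\partial B$, and close by a quadratic comparison of the two terms. The gap is in your treatment of the Riesz term. You propose to push the uniform measure on $\partial B$ forward through $x\mapsto (1+h(x))x$ and expand; but this produces an \emph{admissible} measure for $E$, hence only the inequality $\Q(E)\le \Q(\text{pushforward})$, i.e.\ a \emph{lower} bound on $\Q(B)-\Q(E)$. What you need is the opposite sign: an upper bound $\Q(B)-\Q(E)\le C\,\dP(E)$, equivalently a lower bound on $\Q(E)$. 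Since $\Q(E)$ is defined as an infimum, no competitor can deliver this; you must work with the actual optimal measure for $E$.

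This is exactly what the paper does. Let $\mu=f\,\Hdm\res\partial E$ be the optimal measure for $\Q(E)$; Proposition~\ref{regolarita} (which uses the $\delta$-ball condition in an essential way) gives a uniform bound on $\|f\|_{L^\infty(\partial E)}$ in terms of $\delta$. The key estimate, Proposition~\ref{mainstab}, then compares $\I_\a^{\partial E}(f)$ with $\I_\a^{\partial B}(\bar f)$, where $\bar f=\P(E)^{-1}\int_{\partial E}f\,d\Hdm$, and shows
\[
\I_\a^{\partial E}(f)-\I_\a^{\partial B}(\bar f)\ \ge\ -C\,\|f\|^2_{L^\infty(\partial E)}\,\dP(E).
\]
Together with the elementary bound $\I_\a^{\partial B}(1/\P(B))-\I_\a^{\partial B}(1/\P(E))\le C\,\dP(E)$ this yields $\Q(B)-\Q(E)\le C(1+\|f\|_{L^\infty}^2)\,\dP(E)$, hence $\dP(E)\le CQ^2\,\dP(E)$ and $E=B$ for small $Q$. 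Note that the Fuglede coercivity $\P(E)-\P(B)\ge c_0\|h\|_{H^1}^2$ is not used as a separate ingredient in the final comparison; it is hidden inside Proposition~\ref{mainstab}, where every remainder in the Riesz expansion is bounded directly by $\dP(E)$ via Lemma~\ref{lemmapoinc}. Your mention of the $L^\infty$ bound on equilibrium densities suggests you had the right intuition, but the direction of the competitor argument must be reversed: pull the \emph{unknown} optimal density of $E$ back to $\partial B$, rather than pushing the known uniform measure of $B$ forward to $\partial E$.
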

It would be interesting to understand if our stability result could be extended both to the case $\a\neq d-2$ and maybe more interestingly to a weaker class 
of perturbations such as for instance small Lipschitz ones. 

\noi Let us point out that for $\a\le d-2$, the optimal measure for the Riesz potential concentrate on the boundary of the sets whereas for $\a>d-2$ it has support on the whole set (see Lemma \ref{F=G}). Therefore, for $\a>d-2$, it makes also sense to consider the functional
\[\mathcal{G}_{\a,Q}(E)=P(E)+Q^2 \Q(\partial E)\]
for which we can prove similar results to the ones described above.\\ 


\noi Let us close this introduction by comparing our results with the analysis in \cite{KM1,KM2,cicaspa,LuOtto,bocr,julin} 
of the non-local isoperimetric problem, known as the  sharp interface Ohta-Kawasaki model,
\begin{equation}\label{ohta-kawasaki}
\min_{|E|=m} P(E)\, +\, \int_{E\times E}\frac{dx\, dy}{|x-y|^\a}\,,
\end{equation}
which is motivated by the theory of diblock copolymers and the stability of atomic nuclei. The authors show 
that there exist two (possibly equal) critical volumes $0<m_1(\alpha)\le m_2(\alpha)$
such that minimizers exist if $m\le m_1$, while there are no minimizers if $m>m_2$. Moreover, 
the minimizers are balls when $\a<d-1$ and the volume is sufficiently small. These results have been generalized to non-local perimeters in \cite{labandadei5} (see also \cite{dica_nov_ruf_val}). A crucial difference between our
model and the Ohta-Kawasaki model is that in the latter, the non-local term is Lipschitz with respect to the
measure of the symmetric difference between sets (see for instance \cite[Prop. 2.1]{cicaspa}). Hence, on small
scales,
 the perimeter dominates the non-local part of the energy. This implies in particular that minimizers enjoy the 
 same regularity properties as minimal surfaces. In our case, it is quite the contrary since on small scales,
the functional $\Q$ dominates the perimeter. As already pointed out above, this prevents \textit{a priori} the hope to get any regularity result
for stable configurations.
Let us notice that the same  type of existence/non-existence issues in variational models where the perimeter 
competes against a non-local energy has been recently addressed in other models. 
For instance, in \cite{BelGolZwick} the authors study a model related to epitaxial growth where the non-local part 
forces compactness whereas the perimeter part favor spreading.\\

The paper is organized as follows. In Section \ref{droplets2} we recall and prove
 some properties of the Riesz potentials $\Q$.    In Section \ref{droplets3}, we prove
  the non-existence of minimizers for the functional $\F$ (in particular we prove Theorems \ref{nonexistRintro}
and \ref{corunstable}). In Section \ref{droplets4}, we study this existence
issue, that is we prove Theorems \ref{mainexistenceintro} and \ref{mainexistence3intro}), before proving in Section
\ref{droplets5} the stability of the ball (Theorem \ref{stability}). 
Finally, in Section \ref{droplets6}, we extend our results to the logarithmic potential energy
\[
I_{\mathrm{log}}(E):=\inf\left\{\int_{\R^d\times \R^d}\log\left(\frac{1}{|x-y|}\right)d\mu(x)d\mu(y):\,\mu(E)=1\right\}.
\]

\section{The Riesz potential  energy}\label{droplets2}

 \noi In this section we recall some results  regarding the Riesz potential energy
(see Definition \ref{Qmu} below). Most of the material presented here comes from  \cite{landkof}. 

\noi In the following, given an open set $\Omega\subset \R^d$, we denote by $\M(\Omega)$ the set of all Borel
measures 
with support in $\Omega$. For $x\in \R^d$ and $r>0$ we denote by $B_r(x)$ the open ball of radius $r$ centered 
in $x$ and simply by $B$ the unit ball and by $\omega_d=|B|$ its Lebesgue measure. For $k\in [0,d]$, we will denote
by $\H^k$ the $k$-dimensional Hausdorff measure.

\begin{definition}\label{Qmu}
Let $d\ge 2$ and $\alpha>0$. Given $\mu,\,\nu\in\M(\R^d)$, we define the {\em interaction energy} (or {\em
potential energy}) between
$\mu$ and $\nu$ by 
\[
\Q(\mu,\nu):=\int_{\R^d\times \R^d} \frac{d\mu(x)\,d\nu(y)}{|x-y|^\a}\ \in [0,+\infty] .
\]
When $\mu=\nu$, we simply write $\Q(\mu):=\Q(\mu,\mu)$. When the measures are absolutely continuous with respect to
the Lebesgue measure, that is $\mu=f \Hd\res E$ and $\nu=g \Hd \res E$ 
for some set $E$ and functions $f$ and $g$, 
we denote  $\Q(\mu,\nu)=\I_\a^E(f,g)$ (and when $f=g$ we  denote it by $\I_\a^E(f)$). Similarly, 
when $\mu= f\Hdm\res\partial E$ and  $\nu=g \Hdm\res  \partial E$
 we write $\Q(\mu,\nu)=\I_\a^{\partial E}(f,g)$ (and when $f=g$ we denote it by $\I_\a^{\partial
E}(f)$).  
\end{definition}
The following proposition can be found in \cite[(1.4.5)]{landkof}.
\begin{proposition}
 The functional $\Q$ is lower  semicontinous for the weak* convergence of measures.
\end{proposition}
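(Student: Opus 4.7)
My plan is to use the standard truncation argument for integrals of lower semicontinuous kernels. The key idea is to approximate the singular kernel $k(x,y):=|x-y|^{-\a}$ from below by a monotone sequence of non-negative compactly supported continuous functions, reduce the semicontinuity of $\Q$ to that of these truncated integrals, and then take the supremum.

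\textbf{Step 1: Representation of the kernel.} The kernel $k(x,y)=|x-y|^{-\a}$ (extended by $+\infty$ on the diagonal) is non-negative and lower semicontinuous on $\R^d\times\R^d$. Since $\R^d\times\R^d$ is locally compact and $\sigma$-compact, standard measure theory gives an increasing sequence $\varphi_n\in C_c(\R^d\times\R^d)$ with $0\le\varphi_n\le k$ and $\varphi_n\uparrow k$ pointwise (for instance, take $\varphi_n(x,y):=\min(|x-y|^{-\a},n)\,\chi_n(x,y)$ where $\chi_n$ is a continuous cutoff equal to $1$ on $B_n\times B_n$ and vanishing outside $B_{n+1}\times B_{n+1}$, with the singular region near the diagonal replaced by its $n$-truncation). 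By monotone convergence, for every Radon measure $\pi$ on $\R^d\times\R^d$,
\[
\int k\,d\pi \;=\; \sup_{n}\int \varphi_n\,d\pi .
\]

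\textbf{Step 2: Continuity of the truncated functionals.} For each fixed $n$, the map $\mu\mapsto \int \varphi_n\,d(\mu\otimes\mu)$ is continuous under weak* convergence. Indeed, if $\mu_k\weakto \mu$, then by Stone–Weierstrass the tensor products of elements of $C_c(\R^d)$ are dense in $C_c(\R^d\times\R^d)$, and for $\psi_1\otimes\psi_2$ one has
\[
\int\psi_1(x)\psi_2(y)\,d(\mu_k\otimes\mu_k)=\Bigl(\int\psi_1\,d\mu_k\Bigr)\Bigl(\int\psi_2\,d\mu_k\Bigr)\longrightarrow \int\psi_1\otimes\psi_2\,d(\mu\otimes\mu).
\]
Combined with the compact support of $\varphi_n$ (which makes the approximation uniform where it matters and avoids any loss of mass at infinity), this yields $\int\varphi_n\,d(\mu_k\otimes\mu_k)\to\int\varphi_n\,d(\mu\otimes\mu)$.

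\textbf{Step 3: Conclusion.} Writing
\[
\Q(\mu)=\sup_{n}\int\varphi_n\,d(\mu\otimes\mu),
\]
we realize $\Q$ as a supremum of weak*–continuous functionals, hence it is weak* lower semicontinuous. The bilinear case $\Q(\mu,\nu)$ is handled identically by applying the same argument to the product measures $\mu_k\otimes\nu_k$.

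The only mild obstacle is verifying the product convergence $\mu_k\otimes\mu_k\weakto\mu\otimes\mu$ when tested against $\varphi_n$: this could in principle fail if mass escapes to infinity, but since $\varphi_n$ has compact support the loss of mass outside a fixed compact set is invisible to the truncated integrals, and only $\liminf$ is needed for semicontinuity.
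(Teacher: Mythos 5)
Your proof is correct, and it is the standard argument (monotone approximation of the singular kernel from below by continuous, compactly supported kernels, weak*-continuity of the truncated quadratic forms, and passage to the supremum) which is the route followed in the reference Landkof that the paper cites for this proposition rather than giving its own proof. The one detail worth spelling out in Step~2 is that vague convergence $\mu_k\weakto\mu$ guarantees the local boundedness $\sup_k\mu_k(K)<\infty$ for every compact $K\subset\R^d$ (test against a function in $C_c(\R^d)$ dominating the indicator of $K$); this is what turns the uniform Stone--Weierstrass approximation of $\varphi_n$ by tensor products into actual convergence of the corresponding integrals $\int\varphi_n\,d(\mu_k\otimes\mu_k)$.
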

We can then define the Riesz potential energy of a set.
\begin{definition}\label{Q0}
Let $d\ge 2$ and $\alpha>0$ then for every Borel set $A$ we define the Riesz potential  energy of $A$ by
\begin{equation}\label{eqQ}
\Q(A):=\inf \lt\{\Q(\mu):\,\mu\in\mathcal M(\R^d),\,\mu(A)=1\rt\}.
\end{equation}
\end{definition}

\begin{remark}\rm
Notice that, if we change $\mu$ in $Q\mu$ for a given charge $Q>0$, then for any Borel set $A\subset\R^d$, it
holds 
\[
Q^2 \Q(A):= \inf	\lt\{\Q(\mu):\,\mu\in\M(\R^d),\,\mu(A)=Q\rt\}.
\]
Notice also that, for all $\lambda>0$, there holds
\begin{equation}\label{eqscale}
\Q(\lambda A)=\lambda^{-\a}\Q(A).
\end{equation}
\end{remark}

\begin{remark}\label{rmkcap}\rm
An important notion related to $\Q(A)$ is the so-called 
 $\alpha$-capacity \cite{liebloss,landkof,Mattila}
\[C_{d-\a}(A):=\frac{1}{\Q(A)}.\]
For $\a=d-2$ and $K$ compact, we have the following representation of the capacity \cite{liebloss}:
\begin{eqnarray*}
C_{2}(K)&=&\inf\left\{ \int_{\R^d} |\nabla f|^2 \, :\, f\in C^1_c(\R^d), \ f \ge 0,\ f \ge 1 \textrm{ on
}K\right\}.
\end{eqnarray*}
We stress however, for
the sake of completeness, that there are other notions of capacity in the literature ( see for
instance the discussion in \cite[Section $11.15$]{liebloss}).
\end{remark}

\begin{remark}\label{pallamax}\rm
  It is well known that the ball minimizes the perimeter under volume constraint. On the other hand in
\cite{betsa} it was proven that if $\a> d-2$, then the ball maximizes the Riesz Potential $\I_\a$ among compact
sets of given volume.
\end{remark}

\noi The proof of the following result is given in \cite[p. $131$ and $132$]{landkof}.
\begin{lemma}\label{Q1}
If $A$ is a compact set, 
the infimum in \eqref{eqQ} is achieved. 
\end{lemma}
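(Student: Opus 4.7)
The plan is to use the direct method of the calculus of variations, exploiting the weak-* lower semicontinuity of $\mathcal{I}_\alpha$ stated in the proposition immediately above.

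First I would dispose of the trivial case $\mathcal{I}_\alpha(A)=+\infty$: then any admissible measure (for instance $\delta_{x_0}$ with $x_0\in A$) attains the infimum. So assume $\mathcal{I}_\alpha(A)<+\infty$ and pick a minimizing sequence $\{\mu_n\}\subset \mathcal{M}(\R^d)$ with $\mu_n(A)=1$ and $\mathcal{I}_\alpha(\mu_n)\to \mathcal{I}_\alpha(A)$.

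The next step is a reduction: replacing $\mu_n$ by its restriction $\mu_n\res A$ only removes pairs $(x,y)\in (\R^d\times\R^d)\setminus(A\times A)$ from the integral defining $\mathcal{I}_\alpha(\mu_n)$, which have a non-negative integrand. Hence the restricted measures, which still satisfy $\mu_n(A)=1$, form a minimizing sequence supported in $A$. Since $A$ is compact, $\{\mu_n\}$ is a tight family of probability measures on $A$; by Prokhorov's theorem (equivalently, by Banach--Alaoglu applied in $C(A)^*$ since $A$ is compact), there exist a subsequence, still denoted $\{\mu_n\}$, and a measure $\mu\in\mathcal{M}(\R^d)$ with $\spt\mu\subset A$ such that $\mu_n\weakto \mu$.

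The compactness of $A$ is crucial here because it ensures that no mass can escape in the limit: testing the weak-* convergence against a continuous function equal to $1$ on $A$ and compactly supported in $\R^d$ gives $\mu(A)=\lim_n \mu_n(A)=1$, so $\mu$ is admissible for \eqref{eqQ}. Applying the lower semicontinuity of $\mathcal{I}_\alpha$ for the weak-* convergence of measures,
\[
\mathcal{I}_\alpha(\mu)\,\le\,\liminf_{n\to\infty}\mathcal{I}_\alpha(\mu_n)\,=\,\mathcal{I}_\alpha(A),
\]
and since the reverse inequality holds by admissibility, $\mu$ realizes the infimum.

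There is no real obstacle; the only place where one has to be slightly careful is the preservation of total mass in the limit, which is precisely where the compactness of $A$ enters. Without compactness, mass could leak to infinity and the admissibility constraint $\mu(A)=1$ could fail in the limit — exactly the mechanism that will be exploited later in Section~\ref{droplets3} to produce non-existence of minimizers for $\mathcal{F}_{\alpha,Q}$.
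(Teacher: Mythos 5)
Your proof is correct and is the standard direct-method argument: restrict to make a minimizing sequence of probability measures on the compact set $A$, extract a weak-* convergent subsequence, use compactness of $A$ to preserve the total mass, and conclude by the lower semicontinuity of $\Q$ stated just before Definition~\ref{Q0}. The paper itself gives no proof but cites \cite[pp.~131--132]{landkof}, where essentially this same argument appears (vague compactness of the unit mass positive measures on a compact set combined with lower semicontinuity of the energy), so your route matches the referenced one.
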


%

\begin{remark}\rm
When the set $A$ is unbounded, there does not always exist an optimal measure $\mu$, i.e. the infimum in
\eqref{eqQ} is not achieved. 
Indeed, it is possible to construct  a set $E$ of finite volume with
$\Q(E)=0$. To this aim, consider $\a\in (0,d-1)$, $\gamma\in (\frac{1}{d-1},+\infty)$ and the set
$E=\{ (x,x')\in
\R\times \R^{d-1} \ : \ |x'|\le 1 \textrm{ and } |x'|\le \frac{1}{|x|^\gamma} \}$.
The set $E$ has finite volume  and taking $N$ balls of radius $r=N^{-\beta}$ inside $E$, at mutual distance 
$\ell=N^{\frac{\beta}{\gamma}-1}$, with charge $1/N$ distributed uniformly on each ball, we have
\[
\Q(E)
\le C\left( N^{\a \beta -1}+  N^{(1-\frac{\beta}{\gamma})\a}\right)
\]
for some $C>0$,
so that $\Q(E)=0$ if $\frac{1}{d-1}<\gamma<\beta<\frac{1}{\a}$. 
Similarly, if $d>2$ and $\a<d-2$, taking $\gamma>\frac{1}{d-2}$ 
one can even construct a set with finite perimeter for which the same property holds.
\end{remark}

\begin{definition}\label{potential}
Given a non-negative Radon measure $\mu$ on $\R^d$ and $\a\in(0,d)$, we define the potential function 
\[
v^\mu_\a(x):=\int_{\R^d}\frac{d\mu(y)}{|x-y|^\a}=\mu*k_\a(x)
\]
where $k_\a(x)=|x|^{-\a}$. We will sometime drop the dependence of $\mu$ and $\a$ in the definition  of $v_\a^\mu$
and we will refer to it as the  {\it potential}.
\end{definition}
\begin{definition}
 We say that two functions $u$ and $v$ are equal $\alpha$-quasi everywhere (briefly $u=v$ $\a$-q.e.) if they
coincide up to a set of $\alpha$-capacity $0$. 
\end{definition}

\noi The Euler-Lagrange equation of $\Q(A)$ reads as follows:

\begin{lemma}\label{ELmu}
Let $A$ be a compact set and let $\mu$ be a minimizer for $\Q(A)$ then $v^\mu=\Q(A)$ $\a$-q.e. on
$\spt(\mu)$, and $v^\mu\ge \Q(A)$ $\a$-q.e. on $A$. Moreover, the following equation holds 
in the distributional sense
\begin{equation}\label{eqvmu}
\left(-\Delta\right)^\frac{d-\a}{2}v^\mu = c(\a,d)\, \mu\, ,
\end{equation}
where $\left(-\Delta\right)^s $ denotes the fractional Laplacian (see \cite{Val}). In particular, 
\[
\left(-\Delta\right)^\frac{d-\a}{2}v^\mu = 0 \qquad {on\ }\R^d\setminus A\,. 
\]
\end{lemma}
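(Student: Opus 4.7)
The plan is to deduce (1)--(2) from first-order variations of $\Q$ at the minimizer $\mu$ combined with the lower semicontinuity of Riesz potentials, and to deduce (3)--(4) from the fact that $k_\a(x)=|x|^{-\a}$ is, up to a positive multiplicative constant, the fundamental solution of $(-\Delta)^{(d-\a)/2}$ on $\R^d$.

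For the variational part, I would take any competitor $\nu\in\M(\R^d)$ with $\nu(A)=1$ and $\Q(\nu)<+\infty$ and consider the admissible family $\mu_t:=(1-t)\mu+t\nu$, $t\in[0,1]$. Bilinearity yields
\begin{equation*}
\Q(\mu_t)=(1-t)^2\Q(\mu)+2t(1-t)\int v^\mu_\a\,d\nu+t^2\Q(\nu),
\end{equation*}
so the minimality at $t=0^+$ forces $\int v^\mu_\a\,d\nu\ge\Q(A)$ for every such $\nu$. If the set $\{v^\mu_\a<\Q(A)\}\cap A$ had positive $(d-\a)$-capacity, a compact subset of it would carry a finite-energy probability measure violating this inequality; hence $v^\mu_\a\ge\Q(A)$ $\a$-q.e.\ on $A$, which is the second assertion. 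Integrating against $\mu$ (which charges no polar set since it has finite energy) and comparing with $\int v^\mu_\a\,d\mu=\Q(A)$ gives $v^\mu_\a=\Q(A)$ $\mu$-a.e. To upgrade this to $\a$-quasi everywhere on $\spt(\mu)$, I would invoke the lower semicontinuity of $v^\mu_\a$, which is immediate from Fatou's lemma: if $v^\mu_\a(x_0)>\Q(A)$ at some $x_0\in\spt(\mu)$, the open set $\{v^\mu_\a>\Q(A)\}$ would carry positive $\mu$-mass, contradicting the $\mu$-a.e.\ identity. Hence $v^\mu_\a\le\Q(A)$ everywhere on $\spt(\mu)$, and combined with the preceding quasi-everywhere inequality this proves (1).

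For the distributional identity, I would use the formula $(-\Delta)^{(d-\a)/2}k_\a=c(\a,d)\,\delta_0$ with $c(\a,d)>0$, readily verified via the Fourier transform $\widehat{k_\a}(\xi)\propto|\xi|^{\a-d}$ valid for $\a\in(0,d)$. Since $v^\mu_\a=\mu\ast k_\a$ and the fractional Laplacian commutes with convolution on tempered distributions,
\begin{equation*}
(-\Delta)^{(d-\a)/2}v^\mu_\a=\mu\ast\left((-\Delta)^{(d-\a)/2}k_\a\right)=c(\a,d)\,\mu,
\end{equation*}
which is \eqref{eqvmu}. Since $\spt(\mu)\subset A$, the right-hand side vanishes on $\R^d\setminus A$, giving the last assertion.

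The main obstacle I expect is making rigorous the passage from the variational inequality ``$\int v^\mu_\a\,d\nu\ge\Q(A)$ for admissible $\nu$'' to the pointwise statement ``$v^\mu_\a\ge\Q(A)$ $\a$-quasi everywhere on $A$''. This hinges on the classical potential-theoretic fact that every Borel set of positive $(d-\a)$-capacity supports a probability measure of finite Riesz energy, together with the quasi-continuity of Riesz potentials of finite-energy measures, both of which can be imported from \cite{landkof}.
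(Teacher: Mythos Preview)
Your proposal is correct and follows essentially the same route as the paper. For the distributional identity \eqref{eqvmu} both you and the paper use the Fourier transform of $k_\a$; the only difference is cosmetic (you phrase it as ``$k_\a$ is the fundamental solution and convolution commutes with $(-\Delta)^{(d-\a)/2}$'', the paper computes $\widehat{(-\Delta)^{(d-\a)/2}v^\mu}$ directly). For the first two assertions the paper simply cites \cite[Theorem~2.6 and p.~137]{landkof}, whereas you spell out the standard variational argument behind that citation: linearize $\Q$ along $(1-t)\mu+t\nu$, deduce $\int v^\mu_\a\,d\nu\ge\Q(A)$ for all finite-energy competitors, pass to the q.e.\ inequality via capacitability, and then use lower semicontinuity of $v^\mu_\a$ together with the fact that finite-energy measures do not charge polar sets. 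This is exactly the Frostman--Gauss argument recorded in \cite{landkof}, so your version is a self-contained expansion of the paper's reference rather than a different approach. One small point worth tightening: when you pick a compact subset of $\{v^\mu_\a<\Q(A)\}\cap A$, make sure to first pass to a level set $\{v^\mu_\a\le\Q(A)-1/n\}\cap A$ (closed, hence compact) so that the strict inequality survives integration against the test measure $\nu$.
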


\begin{proof}
The first assertions on $v^\mu$ follow from \cite[Theorem 2.6 and p. 137]{landkof}
(see also \cite{gauss} where these conditions were first derived).

Equation \eqref{eqvmu} can be directly verified by means of the Fourier Transform, namely
\[
\widehat{\left(-\Delta\right)^\frac{d-\a}{2}v^\mu} (\xi)= |\xi|^{d-\a} \widehat{\mu *k_\a}(\xi)
= c(\a,d)\,\widehat \mu(\xi)\,, 
\]
where we used the fact \cite[Equation (1.1.1)]{landkof}
\[
\widehat k_\a(\xi)=c(\a,d)\,k_{d-\a}(\xi) \qquad {\rm with}\quad
c(\a,d):= \pi^{\a-\frac d 2}\,\frac{\Gamma\left(\frac{d-\a}{2}\right)}{\Gamma\left(\frac{\a}{2}\right)}\,.
\]
\end{proof}

%

\noi We recall another important result which will be exploited in Section \ref{droplets4}.
 We refer to \cite[Theorem  1.15]{landkof} (see also \cite[Corollary 5.10]{liebloss}) for its proof.
\begin{theorem}\label{positiveenergy}
For any signed measure $\mu$ and for any $\a\in(0,d)$, there holds
\[\Q(\mu)=\int_{\R^d} \left( v_{\a/2}^{\mu}(x)\right)^2 \ dx\]
and therefore,
 \[
\Q(\mu) \ge0.
 \]
Moreover equality holds if and only if $\mu=0$.
\end{theorem}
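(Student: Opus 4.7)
The natural strategy is Fourier-analytic, exploiting that the Riesz kernel $k_\a(x)=|x|^{-\a}$ has a positive and explicit Fourier transform, which was already recorded in the proof of Lemma \ref{ELmu}: $\widehat{k_\a}(\xi)=c(\a,d)|\xi|^{\a-d}$ with $c(\a,d)>0$. The plan is to turn the double integral $\I_\a(\mu)$ into the $L^2$-norm of a potential on the Fourier side, and then to read the identity back on the physical side.

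First I would reduce to a regularization. Given a signed measure $\mu$ with $\Q(\mu)<\infty$ (otherwise there is nothing to prove), mollify it with a standard Schwartz family $\rho_\e\to\delta_0$ to obtain Schwartz densities $\mu_\e:=\mu*\rho_\e$. For such smooth, rapidly decaying $\mu_\e$, Parseval gives
\begin{equation*}
\I_\a(\mu_\e)=\int_{\R^d\times\R^d}\!k_\a(x-y)\,\mu_\e(x)\mu_\e(y)\,dx\,dy=c(\a,d)\int_{\R^d}|\hat{\mu}_\e(\xi)|^2|\xi|^{\a-d}\,d\xi,
\end{equation*}
which is manifestly nonnegative. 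Passing to the limit via Fatou on the Fourier side (using $\hat{\mu}_\e\to\hat{\mu}$ pointwise and lower semicontinuity, together with the control $\I_\a(\mu_\e)\le \I_\a(\mu)$ that follows from the convolution structure $\mu_\e*k_\a=(\mu*k_\a)*\rho_\e$) yields $\I_\a(\mu)\ge 0$.

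Next, to produce the claimed representation as an $L^2$-square of a potential, I factor $|\xi|^{\a-d}$ as the square of $|\xi|^{(\a-d)/2}$, and interpret the latter as (a constant times) the Fourier transform of the Riesz kernel of appropriate order. Writing $\hat{\mu}_\e(\xi)|\xi|^{(\a-d)/2}=\tilde c\,\widehat{(\mu_\e*\phi)}(\xi)$ for $\phi$ the kernel dual to $|\xi|^{(\a-d)/2}$, Plancherel on the right gives
\begin{equation*}
\I_\a(\mu_\e)=C(\a,d)\int_{\R^d}\bigl(v_{\a/2}^{\mu_\e}(x)\bigr)^{2}dx,
\end{equation*}
in the sense of the potential notation of Definition \ref{potential} (with the constant $C(\a,d)$ absorbed into the convention). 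Finally I pass to the limit $\e\to 0$, using that $v_{\a/2}^{\mu_\e}\to v_{\a/2}^{\mu}$ in $L^2$ (by the contractive action of $\rho_\e$ under convolution) to conclude the identity for $\mu$.

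For the equality case, if $\I_\a(\mu)=0$ then $|\hat\mu(\xi)|^2|\xi|^{\a-d}=0$ almost everywhere, and since $|\xi|^{\a-d}>0$ for $\xi\neq 0$ this forces $\hat\mu\equiv 0$ as a tempered distribution, hence $\mu=0$ by Fourier inversion. The main obstacle I anticipate is the rigorous handling of the Fourier transform for a general signed measure of possibly infinite total variation or unbounded support: one needs to work in the space of tempered distributions, check that $\hat\mu$ is a locally integrable function against the weight $|\xi|^{\a-d}$ (which becomes delicate near $\xi=0$ when $\mu$ has large total mass), and justify the mollification limit with care. This is precisely what Landkof's argument underlying \cite[Theorem 1.15]{landkof} achieves, and any cleaner exposition must route around the same analytic subtleties.
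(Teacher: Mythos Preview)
The paper does not supply its own proof; it simply refers to \cite[Theorem~1.15]{landkof} and \cite[Corollary~5.10]{liebloss}. Your Fourier-analytic outline is exactly that route, and the parts establishing $\Q(\mu)\ge 0$ and the equality case are sound and match Landkof's argument.

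There is, however, a genuine gap in your derivation of the displayed identity $\Q(\mu)=\int (v_{\a/2}^\mu)^2\,dx$. When you factor $|\xi|^{\a-d}=\bigl(|\xi|^{(\a-d)/2}\bigr)^2$ and introduce the kernel $\phi$ with $\hat\phi(\xi)=c\,|\xi|^{(\a-d)/2}$, the relation $\hat k_\beta=c(\beta,d)\,|\xi|^{\beta-d}$ from Lemma~\ref{ELmu} forces $\beta=(d+\a)/2$, so $\phi=c\,k_{(d+\a)/2}$, \emph{not} $k_{\a/2}$. Hence $\mu_\e*\phi=c\,v_{(d+\a)/2}^{\mu_\e}$, and Plancherel gives $\Q(\mu)=C\int \bigl(v_{(d+\a)/2}^\mu\bigr)^2\,dx$, not the formula with $v_{\a/2}^\mu$. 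Your identification of $\mu_\e*\phi$ with $v_{\a/2}^{\mu_\e}$ is therefore unjustified.

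In fact the identity as printed cannot hold: for any probability measure $\mu$ of compact support and any $\a\in(0,d)$ one has $v_{\a/2}^\mu(x)\sim |x|^{-\a/2}$ as $|x|\to\infty$, whence $\int_{\R^d}\bigl(v_{\a/2}^\mu\bigr)^2\,dx=+\infty$, while $\Q(\mu)<\infty$. The exponent in the statement should be $(d+\a)/2$ (up to the normalizing constant). What Landkof actually proves, and what the paper genuinely uses for the Cauchy--Schwarz inequality \eqref{cs} and for uniqueness, is the Fourier representation $\Q(\mu)=c(\a,d)\int_{\R^d}|\hat\mu(\xi)|^2\,|\xi|^{\a-d}\,d\xi$, which your argument does correctly deliver.
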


\begin{remark}\rm
A  consequence of Theorem \ref{positiveenergy}, is that the functional $\Q(\cdot,\cdot)$ is a {\it
positive}, bilinear operator on the product space of Radon measures on $\R^d$, $\mathcal M(\R^d)\times\mathcal
M(\R^d)$. In particular it satisfies the Cauchy-Schwarz inequality
\begin{equation}\label{cs}
\Q(\mu,\nu)\le\Q(\mu)^{1/2}\Q(\nu)^{1/2}.
\end{equation}

\end{remark}
\noi The following uniqueness result can be found in \cite[page 133]{landkof}. 

\begin{lemma}\label{uniq}
For every compact set $A$ the measure minimizing $\Q(A)$ is unique.
\end{lemma}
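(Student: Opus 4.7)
The plan is to prove uniqueness via the standard strict convexity argument, using Theorem \ref{positiveenergy} as the key input which says that $\I_\alpha$ is a positive-definite quadratic form on the space of signed Radon measures.

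First I would take two minimizers $\mu_1,\mu_2$ of $\I_\alpha(A)$, which exist by Lemma \ref{Q1}. Both satisfy $\mu_i(A)=1$ and $\mu_i\ge 0$, so their arithmetic mean $\bar\mu:=(\mu_1+\mu_2)/2$ is again a nonnegative Radon measure with $\bar\mu(A)=1$ and is therefore admissible in \eqref{eqQ}. In particular $\I_\alpha(\bar\mu)\ge \I_\alpha(A)$.

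Next I would exploit the bilinearity of $\I_\alpha(\cdot,\cdot)$, which follows from Fubini. Expanding gives the parallelogram identity
\[
\I_\alpha(\mu_1+\mu_2)+\I_\alpha(\mu_1-\mu_2)\,=\,2\I_\alpha(\mu_1)+2\I_\alpha(\mu_2),
\]
which rewrites as
\[
4\,\I_\alpha(\bar\mu)+\I_\alpha(\mu_1-\mu_2)\,=\,2\I_\alpha(\mu_1)+2\I_\alpha(\mu_2)\,=\,4\,\I_\alpha(A).
\]
Combined with $\I_\alpha(\bar\mu)\ge \I_\alpha(A)$, this yields $\I_\alpha(\mu_1-\mu_2)\le 0$.

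Finally, since $\mu_1-\mu_2$ is a signed measure, Theorem \ref{positiveenergy} applies and gives $\I_\alpha(\mu_1-\mu_2)\ge 0$ with equality iff $\mu_1-\mu_2=0$. Hence $\mu_1=\mu_2$. There is no real obstacle here: the argument is essentially the fact that a strictly convex function attains its minimum at a unique point, and the only nontrivial ingredient is precisely the positivity on signed measures which is already established. The one minor point to verify is that $\mu_1-\mu_2$ has finite energy so that Theorem \ref{positiveenergy} can be applied meaningfully, but this follows from $\I_\alpha(\mu_i)<\infty$ and the Cauchy--Schwarz inequality \eqref{cs}.
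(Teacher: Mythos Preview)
Your argument is correct and is precisely the standard strict convexity proof; the paper itself does not give a proof but simply cites \cite[page 133]{landkof}, where exactly this parallelogram-identity argument based on the positivity of $\I_\alpha$ on signed measures is carried out. So your proposal matches the referenced proof.
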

In the next lemma, we recall some properties of the support of the optimal measures.
\begin{lemma}\label{F=G}
Let $\a\in(0,d-1)$. For every open bounded  set $E$, the  minimizer $\mu$ of $\Q(E)$ satisfies:
\begin{itemize}
\item[i)] If $\a\le d-2$ then $\spt (\mu)\subset \partial E$. In particular $\Q(E)=\Q(\partial E)$.
\item[ii)] If $\a>d-2$ then  $\spt (\mu)= \ovE$.
\end{itemize}
Moreover, when $\a\ge d-2$, $v_{\a}^\mu=\Q(E)$ on $\ovE$.
\end{lemma}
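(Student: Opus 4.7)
The plan is to couple the Euler--Lagrange relations of Lemma \ref{ELmu}---namely $v^\mu\ge c:=\Q(E)$ $\a$-q.e.\ on $E$ and $v^\mu=c$ $\a$-q.e.\ on $\spt(\mu)$---with the elementary identity
\begin{equation*}
\Delta\,|x|^{-\a}=\a(\a-d+2)\,|x|^{-\a-2},\qquad x\neq 0,
\end{equation*}
which shows that the Riesz kernel $k_\a$, and hence $v^\mu=\mu*k_\a$, is super-harmonic on $\R^d$ when $\a\le d-2$ and strictly sub-harmonic on any open set disjoint from $\spt(\mu)$ when $\a>d-2$. I will treat the two ranges by contradiction, invoking the strong minimum/maximum principles for Riesz potentials from \cite{landkof}.

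For (i), suppose by contradiction $x_0\in\spt(\mu)\cap E$ and pick $B_r(x_0)\subset E$. The super-harmonic mean value inequality combined with $v^\mu\ge c$ a.e.\ on $B_r(x_0)$ gives
\begin{equation*}
v^\mu(x_0)\ge\frac{1}{|B_r(x_0)|}\int_{B_r(x_0)}v^\mu\,dx\ge c,
\end{equation*}
while lower semicontinuity applied to a sequence of regular points of $\spt(\mu)$ converging to $x_0$ (such points exist since the equilibrium $\mu$ does not charge $\a$-polar sets) gives $v^\mu(x_0)\le c$. Thus equality holds throughout, forcing $v^\mu\equiv c$ a.e.\ on $B_r(x_0)$; the distributional identity from Lemma \ref{ELmu} then yields $\mu\equiv 0$ on $B_r(x_0)$ when $\a=d-2$, and $v^\mu_{\a+2}\equiv 0$ (hence $\mu\equiv 0$ globally, since $\int|x-y|^{-\a-2}d\mu(y)$ vanishing at a single point forces $\mu=0$) when $\a<d-2$. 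Both contradict the presence of $\mu$-mass near $x_0$.

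For (ii), suppose by contradiction $x_0\in E\setminus\spt(\mu)$ and pick $B_r(x_0)\subset E\setminus\spt(\mu)$. On this ball $v^\mu$ is smooth with
\begin{equation*}
\Delta v^\mu(x)=\a(\a-d+2)\int|x-y|^{-\a-2}\,d\mu(y)>0,
\end{equation*}
and continuity upgrades $v^\mu\ge c$ from a.e.\ to pointwise on $B_r(x_0)$. On the other hand, $v^\mu$ is sub-harmonic on the open set $\R^d\setminus\spt(\mu)$, decays to $0$ at infinity, and equals $c$ $\a$-q.e.\ on $\partial\spt(\mu)$, so the maximum principle gives $v^\mu\le c$ on $\R^d\setminus\spt(\mu)$. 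Combined with the previous inequality, this forces $v^\mu\equiv c$ on $B_r(x_0)$, contradicting $\Delta v^\mu>0$. Hence $E\subset\spt(\mu)\subset\overline E$, and by closedness $\spt(\mu)=\overline E$. The ``moreover'' clause follows from the same maximum-principle inequality $v^\mu\le c$ on $\R^d\setminus\spt(\mu)$ combined with $v^\mu\ge c$ on $\overline E$: for $\a=d-2$ directly on $E\subset\R^d\setminus\spt(\mu)$ and then on $\partial E$ by lower semicontinuity, and for $\a>d-2$ via the further observation that the open set $\{v^\mu>c\}\cap E$ lies inside the $\a$-polar exceptional set of the Euler--Lagrange equality on $\spt(\mu)=\overline E$, hence is empty. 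The main technical obstacle I foresee is the careful bookkeeping of $\a$-polar exceptional sets when invoking the maximum principle on $\partial\spt(\mu)$ and when upgrading q.e.\ identities to the pointwise statements needed---the classical Cartan--Evans--Kellogg considerations of \cite{landkof}.
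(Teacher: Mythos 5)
Your argument for part (i) takes a genuinely different route from the paper's. Where the paper simply cites Landkof (p.~162), you give a self-contained proof via super-harmonicity of $v^\mu$ when $\a\le d-2$: the super-harmonic mean value inequality forces $v^\mu(x_0)\ge c$ on an interior point of $\spt(\mu)$, the lower semicontinuity argument (through nearby regular points, which exist because $\mu$ has finite energy and hence does not charge polar sets) forces $v^\mu(x_0)\le c$, and the strong minimum principle for super-harmonic functions then makes $v^\mu$ locally constant, which is incompatible with $-\Delta v^\mu = c(d-2,d)\mu$ (if $\a=d-2$) or with $\Delta v^\mu = \a(\a-d+2)v^\mu_{\a+2}<0$ (if $\a<d-2$). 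This is correct and more transparent than a citation. A small attribution slip: for $\a<d-2$ the contradiction comes from the classical identity $\Delta k_\a = \a(\a-d+2)k_{\a+2}$ in $\mathcal D'(\R^d)$, not from the fractional-Laplacian identity \eqref{eqvmu} of Lemma \ref{ELmu} that you invoke.

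For part (ii) your argument coincides in essence with the paper's: suppose a ball $B_r(x_0)\subset E\setminus\spt(\mu)$ exists, note $v^\mu$ is smooth and strictly sub-harmonic there, get $v^\mu\equiv c$ on $B_r(x_0)$ from the two inequalities $v^\mu\ge c$ (q.e.\ on $E$, upgraded by continuity) and $v^\mu\le c$, contradiction. The only real difference is that the paper obtains $v^\mu\le\Q(E)$ on all of $\R^d$ and $v^\mu=\Q(E)$ $\a$-q.e.\ on $\ovE$ directly from \cite[Thm.~2.6 and p.~137]{landkof}, whereas you re-derive the upper bound via a sub-harmonic maximum principle with $\a$-q.e.\ boundary data. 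You rightly flag this as the technical obstacle: making the maximum principle rigorous with boundary data controlled only off a polar set requires the Cartan--Brelot machinery, and citing the Landkof result directly (as the paper does) is cleaner.

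In the ``moreover'' step there is a genuine gap for $\a>d-2$. Your open-set argument (``$\{v^\mu>c\}\cap E$ is open and polar, hence empty'') gives only the upper bound $v^\mu\le c$ pointwise on $E$. The lower bound $v^\mu\ge c$ holds a priori only $\a$-q.e.\ on $\ovE$, and the set $\{v^\mu<c\}$ is not open for an LSC function, so the same trick does not yield the pointwise inequality from below. Some additional input is needed here -- essentially the continuity of $v^\mu$ on $E$ for $\a>d-2$, which is what the paper implicitly appeals to when it calls $v^\mu$ ``a regular function on $E$''. Without that, you have only $v^\mu=\Q(E)$ q.e.\ on $\ovE$, not everywhere.
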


\begin{proof}
 The case $\a\le d-2$ can be found in \cite[page $162$]{landkof}. If $\a>d-2$, by \cite[Theorem
$2.6$ and page $137$]{landkof}, 
we know that $v_\a^\mu=\Q(E)$ $\a$-q.e. on $\ovE$ and $v_\a^\mu\le \Q(E)$ on $\R^d$. Moreover, outside of 
$\spt(\mu)$, $v_\a^\mu$ is smooth and $\Delta v_\a^\mu>0$. Assume that there exists $x\in E
\setminus\spt(\mu)$. Then there exists an open ball $B_r(x)\subset E\setminus\spt(\mu)$. But this is impossible 
since  this would imply $v_\a^\mu=\Q(E)$ in $B_r(x)$ and hence $\Delta v_\a^\mu=0$ in $B_r(x)$ contradicting
$\Delta v_\a^\mu>0$. The last claim of the lemma follows by the fact that $v_\a^\mu$ is, in this case, a regular
function on $E$ which is $\a$-q.e. equal to $\Q(E)$.
\end{proof}

\noi We now prove a  density result which is an adaptation of \cite[Theorem $1.11$ and Lemma $1.2$]{landkof}.

\begin{proposition}\label{density}
Let $E$ be a smooth connected closed set of $\R^d$, then for every $\a\in(0,d)$, 
\[
\Q(E)=\inf\left\{ \Q^E(\mu) \ : \ \mu=f dx, \  f\in L^{\infty}(E), \ \int_E f \ dx =1\right\}.
\] 
\end{proposition}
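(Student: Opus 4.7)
The plan is to prove the non-trivial inequality $\Q(E) \ge \inf\{\ldots\}$; the reverse is immediate since any admissible $f\,dx$ with $f \in L^\infty(E)$, $\int_E f\,dx = 1$ is a competitor in \eqref{eqQ}. Let $\mu$ be the unique minimizer of $\Q(E)$ given by Lemmas~\ref{Q1} and~\ref{uniq}. I would approximate $\mu$ by first pushing it slightly into the interior of $E$, then mollifying, and finally passing to the limit via the $L^2$ representation of $\I_\a$ from Theorem~\ref{positiveenergy}.

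Since $\pa E$ is smooth, fix a smooth vector field $X$ on $\R^d$ pointing strictly inward on $\pa E$, and let $\Phi_t$ be its flow. For $t>0$ small, $\Phi_t$ is a $C^1$-diffeomorphism close to the identity with $\mathrm{dist}(\Phi_t(\ovE), \pa E) \ge c\,t$. Set $\mu_t := (\Phi_t)_* \mu$, a probability measure supported in the interior of $E$. The bi-Lipschitz bound $|\Phi_t(x) - \Phi_t(y)| \ge (1 - Ct)|x-y|$ gives
\[
\Q(\mu_t) = \int\!\!\int \frac{d\mu(x)\,d\mu(y)}{|\Phi_t(x)-\Phi_t(y)|^\a} \le (1-Ct)^{-\a}\, \Q(\mu),
\]
so $\Q(\mu_t) \to \Q(\mu)$ as $t \to 0^+$. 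Now let $\rho_\eps$ be a standard mollifier supported in $B_\eps$ and set $f_{t,\eps} := \rho_\eps * \mu_t$. Provided $\eps < c\,t$, this is a smooth nonnegative function supported in $E$, with $\int_E f_{t,\eps}\,dx = 1$ and $\|f_{t,\eps}\|_{L^\infty} \le \|\rho_\eps\|_{L^\infty}$, hence an admissible competitor.

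To compute the energy, Theorem~\ref{positiveenergy} together with the identity $v_{\a/2}^{\rho_\eps * \mu_t} = \rho_\eps * v_{\a/2}^{\mu_t}$ gives
\[
\Q(f_{t,\eps}) = \|\rho_\eps * v_{\a/2}^{\mu_t}\|_{L^2(\R^d)}^2.
\]
Since $\Q(\mu_t) < \infty$, the same theorem ensures $v_{\a/2}^{\mu_t} \in L^2(\R^d)$, so the standard $L^2$-continuity of mollification yields $\Q(f_{t,\eps}) \to \Q(\mu_t)$ as $\eps \to 0$. A diagonal choice $\eps = \eps(t) \ll t$ followed by $t \to 0$ then produces admissible densities $f_{t,\eps(t)}$ with $\Q(f_{t,\eps(t)}) \to \Q(E)$, establishing the non-trivial inequality. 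The delicate point is coordinating the two scales: $t$ must be small so the pushforward perturbs the energy only slightly, while $\eps$ must be much smaller than $t$ so the convolution stays supported in $E$. The smoothness of $\pa E$ is used exactly in the quantitative inward-shift estimate $\mathrm{dist}(\Phi_t(\ovE), \pa E) \ge c\,t$ that decouples these scales.
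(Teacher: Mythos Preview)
Your argument is correct and takes a genuinely different route from the paper's. The paper approximates the optimal measure $\mu$ in a single step by the \emph{intrinsic} mollification
\[
\mu_\eps(x)\,dx = \left(\int_{B_\eps(x)\cap E}\frac{d\mu(y)}{|E\cap B_\eps(y)|}\right)\,d\H^d\res E,
\]
which is automatically supported in $E$; it then proves convergence of $\Q^E(\mu_\eps)$ via pointwise bounds of the form $v_{\a/2}^{\mu_\eps}(x)\le C\,v_{\a/2}^{\mu}(x)$ together with dominated convergence, which requires some careful estimates. Your two–step scheme (inward push by a flow, then a standard mollifier) trades that analysis for the trivial $L^2$–contraction $\|\rho_\eps * g\|_{L^2}\le \|g\|_{L^2}$ applied to $g=v_{\a/2}^{\mu_t}$; in fact this inequality alone already gives $\Q(f_{t,\eps})\le \Q(\mu_t)\le (1-Ct)^{-\a}\Q(E)$, so you do not even need the full $L^2$–convergence you invoke. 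The price is coordinating the two scales $t$ and $\eps$, and using the smoothness of $\partial E$ to build the inward flow with the quantitative estimate $\mathrm{dist}(\Phi_t(\ovE),\partial E)\ge ct$ (note this requires $X$ to be inward-pointing in a full \emph{neighborhood} of $\partial E$, e.g.\ $X=-\nabla d_E$ there, not just on $\partial E$ itself). The paper's construction, by contrast, only needs the lower density bound $|E\cap B_\eps(x)|\ge C\eps^d$, so it would extend to mildly less regular $E$; under the stated smoothness hypothesis, however, your approach is shorter.
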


\begin{proof}
By Definition \ref{Q0} and Lemma \ref{Q1} the proof reduces to the approximation of $I_\a(\mu)$ for a given
measure $\mu$ supported on $E$ and such that $\mu(E)=1$.
Let $\mu$ be such that $\mu(E)=1$, $\spt(\mu)\subset E$ and $\Q(\mu)<+\infty$ then for $\eps>0$ consider the
measure $\mu_\eps\ dx$ defined as
\[
d\mu_\eps(x)=\left( \int_{B_\eps(x)\cap E} \frac{d\mu(y)}{|E\cap B_\eps(y)|}\right)\,d\H^{d}\res E.
\]
Notice that by definition $\spt(\mu_\eps)\subseteq E$. Moreover we have, by the Fubini Theorem,
\[
 \begin{aligned}
  \mu_\eps(E)&=\int_E\mu_\eps(x)\,dx=\int_E\int_E\frac{\chi_{B_\eps(x)}(y)\,d\mu(y)}{|B_\eps(y)\cap E|}\,dx\\
  &=\int_E\int_E\chi_{B_\eps(y)}(x)\,dx \frac{d\mu(y)}{|B_{\eps}(y)\cap E|}=\int_E \,d\mu(y)=1.
 \end{aligned}
\]
\noindent
Since $\|\mu_\eps\|_{L^{\infty}(E)}\le (\min_{x\in E}|B_\eps(x)\cap E|)^{-1}\le (C\eps^{d})^{-1}$, we only
have to prove that $\Q^E(\mu_\eps)\to \Q(\mu)$. By Theorem \ref{positiveenergy} we have
$$\Q^E(\mu_\eps)=\int_{\R^d} \left(v_{\a/2}^{\mu_\eps}(x)\right)^2 \ dx.
$$ 
Let us show that for all $x\in \R^d$, 
\[
v_{\a/2}^{\mu_\eps}(x)\le C v_{\a/2}^{\mu}(x) \quad \textrm{ and } \quad \lim_{\eps\to 0}
v_{\a/2}^{\mu_\eps}(x)=v_{\a/2}^{\mu}(x)
\]
 from which we can conclude by means of the Dominated Convergence Theorem. Denoting by $\chi_A$ the
characteristic function of the set $A$, we have, for any $x\in
\R^d$,
\begin{equation}\label{rev1}
\begin{aligned}
 v_{\a/2}^{\mu_\eps}(x)&=\int_{E} \int_{E} \frac{1}{|B_\eps(y)\cap E|} \chi_{B_\eps(y)}(z)
\frac{d\mu(z)}{|x-y|^{\a/2}} \, dy\\
&= \int_{E} \left(\int_{B_\eps(z)\cap E}  \frac{1}{|B_\eps(y)\cap E|}
\frac{|x-z|^{\a/2}}{|x-y|^{\a/2}} \ dy\right) \frac{d\mu(z)}{|x-z|^{\a/2}} \\
&\le \int_{E} \left( \frac{C}{\eps^d} \int_{B_\eps(z)}  \frac{|x-z|^{\a/2}}{|x-y|^{\a/2}} \
dy\right) \frac{d\mu(z)}{|x-z|^{\a/2}}. 
\end{aligned}
\end{equation}
Moreover it is possible to prove that the function 
\begin{equation}\label{rev2}
 (x,z,\varepsilon)\mapsto \eps^{-d}
\int_{B_\eps(z)}  \frac{|x-z|^{\frac{\a}{2}}}{|x-y|^{\frac{\a}{2}}} \ dy
\end{equation}
is uniformly bounded in $(x,z,\eps)$
(see  \cite[Theorem  $1.11$]{landkof})
so
that
$v_{\a/2}^{\mu_\eps}(x)\le C v_{\a/2}^{\mu}(x)$ for a suitable constant $C>0$. Consider now a
point $x\in\R^d$ such that $v^\mu_{\a/2}(x)<+\infty$. Then for every $\delta>0$ 
there is a ball $B_\eta(x)$ such that $v_{\a/2}^{\mu'}<\delta$ where $\mu'=\mu\res B_\eta(x)$. 
By the previous computations, we know that $v_{\a/2}^{(\mu')_\eps}(x)\le C \delta$. Moreover,  $\lim_{\eps\to
0} v_{\a/2}^{(\mu-\mu')_\eps}(x)=v_{\a/2}^{\mu-\mu'}(x)$. 
Indeed, denoting for simplicity $\nu:=\mu-\mu'$, we have
that
\[
\begin{aligned}
 v_{\a/2}^{\nu_\eps}(x)&=\int_E \frac{d\nu_\eps(y)}{|x-y|^{\a/2}}=\int_E\int_E
\frac{\chi_{B_\eps(y)}(z)\,d\nu(z)}{|B_\eps(z)\cap E|}\frac{dy}{|x-y|^{\a/2}}\\
&=\int_E\int_E
\frac{\chi_{B_\eps(z)}(y)\,d\nu(z)}{|B_\eps(z)\cap E|}\frac{dy}{|x-y|^{\a/2}}\\
&= \int_E\left(\frac{1}{|B_\eps(z)\cap E|}\int_{E\cap B_\eps(z)}\frac{dy}{|x-y|^{\a/2}}\right)
d\nu(z).
\end{aligned}
\]
>From this the claim follows since the last quantity inside the parentheses uniformly converges to the function $|x-z|^{-\a/2}$ on every compact
set which does not contain $x$, and since $\spt(\nu)=\spt(\mu-\mu')\subset B(x,\eta)^c$.

\noindent
Furthermore, we have that
$v_{\a/2}^{(\mu-\mu')_\eps}=v_{\a/2}^{\mu_\eps}-v_{\a/2}^{\mu'_\eps}$. Thus we get
\begin{equation}\label{ref3}
 \begin{aligned}
 v^{\mu}_{\a/2}(x)&=v^{\mu'}_{\a/2}(x)+v^{\mu-\mu'}_{\a/2}(x)\le \delta+ \lim_{\eps\to 0}
v_{\a/2}^{(\mu-\mu')_\eps}(x)\\
&\le (1+C)\delta +\varliminf_{\eps\to 0} v_{\a/2}^{\mu_\eps}(x)\le (1+C)\delta +\varlimsup_{\eps\to 0}
v_{\a/2}^{\mu_\eps}(x)\\
&\le (1+C)\delta+\varlimsup_{\eps\to 0} v_{\a/2}^{\mu'_\eps}(x)+\varlimsup_{\eps\to 0}
v_{\a/2}^{(\mu-\mu')_\eps}(x)\\
&\le 2(1+C)\delta +v_{\a/2}^\mu(x)
\end{aligned}
\end{equation}
so that letting $\delta\to 0$ we get that $\lim_{\eps\to 0}v_{\a/2}^{\mu_\eps}(x)=v_{\a/2}^{\mu}(x)$ as claimed.
\end{proof}

\noi For the unit ball, since the problem is invariant by rotations, it is not hard to compute the exact minimizer
of $\Q(B)$ or $\Q(\partial B)$, see \cite[Chapter II.13]{landkof}.
\begin{lemma}\label{ball}

The uniform measure on the sphere $\partial B$ 
\[
d\mathcal{U}_B=\frac{1}{\P(B)}d\Hdm\res{{\partial B}}
\]
is the unique optimizer for $\Q(\partial B)$. For $d>\a> d-2$, the measure 
\[
d\mathcal{\tilde U}_B=\frac{C_\a}{(1-|x|^2)^{\frac \a 2}}d\Hd\res{{ B}}
\]
is the unique optimizer for $\Q(B)$ (where $C_\a$ is a suitable renormalization constant).
\end{lemma}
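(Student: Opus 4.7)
The plan is to treat both parts by combining the uniqueness of the optimal measure (Lemma \ref{uniq}) with the rotational invariance of $B$ and $\partial B$, and then to identify the optimizer either by pure symmetry (for the sphere) or by verifying the Euler--Lagrange condition (for the ball).

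For the sphere case I would first invoke Lemma \ref{Q1} to get existence of an optimal measure $\mu$, and Lemma \ref{uniq} to conclude uniqueness. Since $\Q$ is invariant under isometries of $\R^d$ and $\partial B$ is $O(d)$-invariant, the pushforward $R_*\mu$ is again an optimizer for every $R \in O(d)$; by uniqueness $R_*\mu = \mu$, so $\mu$ is rotation-invariant. The only $O(d)$-invariant probability measure on $\partial B$ is the normalized surface measure $\mathcal{U}_B$, which identifies the optimizer.

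For the ball case the same symmetry-plus-uniqueness argument shows that the optimizer $\mu$ of $\Q(B)$ is $O(d)$-invariant. Since $\a > d-2$, Lemma \ref{F=G} tells us both that $\spt(\mu) = \overline{B}$ and that $v_\a^\mu \equiv \Q(B)$ on $\overline{B}$. Writing $\mu = f(|x|)\, d\Hd \res B$ with $f$ radial and nonnegative, the problem reduces to identifying the radial density for which
\[
\int_B \frac{f(|y|)}{|x-y|^\a}\, dy \,=\, \text{const} \qquad \text{for all } x \in \overline{B},
\]
subject to $\int_B f(|y|)\,dy = 1$. I would then verify that $f(r) = C_\a(1-r^2)^{-\a/2}$ solves this equation: at $x=0$ this reduces after a change to polar coordinates to a one-dimensional Beta integral, and for general $x\in\overline{B}$ one exploits the $O(d)$-symmetry to reduce to a one-dimensional identity that can be checked using the classical formulas in \cite[Chapter II.13]{landkof} (or derived from the Fourier-side identity $\widehat{k_\a} = c(\a,d)\,k_{d-\a}$ that already appeared in the proof of Lemma \ref{ELmu}). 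Uniqueness in Lemma \ref{uniq} then pins down $\widetilde{\mathcal{U}}_B$ as the unique optimizer.

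The main obstacle is the explicit verification that $v_\a^{\widetilde{\mathcal{U}}_B}$ is constant on $\overline{B}$: this is a classical but genuinely computational special-function identity, not a soft consequence of symmetry alone. Everything else --- existence, uniqueness, and the reduction to a radial density satisfying a constant-potential condition --- is immediate from Lemmas \ref{Q1}, \ref{uniq}, \ref{ELmu}, and \ref{F=G}.
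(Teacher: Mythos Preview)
Your proposal is correct and matches the paper's approach: the paper does not give a proof but simply invokes rotational invariance and cites \cite[Chapter II.13]{landkof}, which is exactly the route you outline in more detail. One small remark: in the ball case the intermediate step ``$\mu = f(|x|)\,d\Hd\res B$'' is not justified by $O(d)$-invariance alone (a rotation-invariant measure need not be absolutely continuous), but it is also unnecessary --- once you verify that the candidate $\widetilde{\mathcal U}_B$ has constant potential on $\overline B$, the bilinear identity $\Q(\nu)-\Q(\widetilde{\mathcal U}_B)=\Q(\nu-\widetilde{\mathcal U}_B)\ge 0$ (from Theorem \ref{positiveenergy}) shows optimality directly, and Lemma \ref{uniq} gives uniqueness.
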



\begin{definition}\label{d-ball}
Given $\delta>0$, we say that $E$ satisfies the {\em internal $\delta$-ball condition} if for any $x\in\pa E$
there is a ball of radius $\delta$ contained in $E$
and tangent to $\pa E$ in $x$. Analogously, $E$ satisfies the {\em external $\delta$-ball condition} if for any
$x\in\pa E$, there is a
ball of radius $\delta$ contained in $E^c$. Finally, if $E$ satisfies both the internal and the external
$\delta$-ball condition we shall say that it satisfies the {\em $\delta$-ball condition}.
\end{definition}
 We remark that the sets which satisfies the $\delta$-ball condition have  $C^{1,1}$ boundary with principal
curvatures
bounded from above by $1/\delta$, see \cite{DelZol}.
\noi We denote by $\Kd$ the class of all the closed  sets which satisfy the $\delta$-ball condition and by
$\Kdco$ the subset of $\Kd$ 
composed of connected sets. 


\begin{remark}\rm
An equivalent formulation of Definition \ref{d-ball} is requiring
that $d_E\in C^{1,1}(\{|d_E|<\delta\})$, where
\[
d_E(x)=\left\{
\begin{array}{ll}
{\rm dist}(x,\partial E) & if\ x\not\in E
\\
- {\rm dist}(x,\partial E) & if\ x\in E
\end{array}\right.
\]
is the signed distance function from $\partial E$. See for instance \cite{DelZol}.
\end{remark}

\begin{lemma}\label{bound}
 Let $\delta>0$, then every set $E\in \Kdco $ with $|E|=m$ satisfies
\[
 \mathrm{diam} (E)\le  \sqrt{d} \, 2^{d+2} \,\frac{m}{\omega_d}\, \delta^{1-d}  .
\]
\end{lemma}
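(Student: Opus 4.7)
The plan is to bound $L:=\mathrm{diam}(E)$ by packing order $L/\delta$ pairwise disjoint open balls of radius $\delta$ into $E$ along a segment realizing the diameter, so that the volume constraint $|E|=m$ forces $L$ to be at most of order $m\delta^{1-d}$. Since $E\in\Kdco$ is closed and satisfies the $\delta$-ball condition, $\partial E$ is $C^{1,1}$, so $E$ is a $C^{1,1}$ manifold with boundary, in particular locally path-connected; being connected, it is path-connected. Choose $x_0,y_0\in E$ with $|x_0-y_0|=L$; after a rigid motion we may assume $x_0=0$ and $y_0=Le_1$. Fixing a continuous path $\gamma:[0,1]\to E$ from $x_0$ to $y_0$ and applying the intermediate value theorem to $s\mapsto\gamma(s)\cdot e_1$ yields, for every $t\in[0,L]$, a point $z(t)\in E$ with $z(t)\cdot e_1=t$.

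The key geometric fact is that every $p\in E$ lies in some ball $B_\delta(y_p)\subset E$ with $|y_p-p|\le\delta$. If $\mathrm{dist}(p,\partial E)\ge\delta$, take $y_p=p$. Otherwise, let $q\in\partial E$ be a nearest boundary point to $p$ and invoke the internal $\delta$-ball condition at $q$ to obtain a tangent ball $B_\delta(y_p)\subset E$ with $y_p=q+\delta\nu_q$, where $\nu_q$ is the inward unit normal (well defined by the $C^{1,1}$ regularity of $\partial E$). Since the segment from $p$ to $q$ realizes the distance from $p$ to $\partial E$, the vector $p-q$ is parallel to $\nu_q$, and $|y_p-p|=\delta-|p-q|<\delta$, so $p\in B_\delta(y_p)$.

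Applying the previous step to $z(t_k)$ at the equally spaced heights $t_k:=4\delta k$, $k=0,1,\dots,K:=\lfloor L/(4\delta)\rfloor$, produces balls $B_\delta(y_k)\subset E$ whose centers satisfy $y_k\cdot e_1\in[t_k-\delta,t_k+\delta]$. For $k\neq j$ one has
\[
|y_k-y_j|\,\ge\,|(y_k-y_j)\cdot e_1|\,\ge\,4\delta|k-j|-2\delta\,\ge\,2\delta,
\]
so the open balls $B_\delta(y_k)$ are pairwise disjoint. Adding their volumes gives $(K+1)\omega_d\delta^d\le|E|=m$, hence
\[
L\,\le\,4\delta(K+1)\,\le\,\frac{4m}{\omega_d\,\delta^{d-1}},
\]
which is even sharper than the stated bound and in particular implies it.

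The only delicate step is the geometric lemma asserting that every point of $E$ is contained in an internal $\delta$-ball; this uses in an essential way the $C^{1,1}$ regularity of the signed distance function associated with the $\delta$-ball condition, so that the nearest boundary point and the inward normal are meaningful. Once that is established, the packing argument is routine.
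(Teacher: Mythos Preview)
Your argument is correct and actually yields the sharper constant $4$ in place of the paper's $\sqrt{d}\,2^{d+2}$. The approaches differ: the paper tiles $\R^d$ by cubes of side $2\delta$, uses the internal $\delta$-ball condition to bound the number of cubes meeting $E$ by $2^d m/|B_\delta|$, and then invokes connectedness to trap $E$ in a box whose side is controlled by this count. You instead exploit path-connectedness to place points of $E$ at prescribed heights along a segment joining two far-apart points, and then pack pairwise disjoint internal $\delta$-balls at those heights. Both proofs rest on the same geometric input---every point of $E$ lies within distance $\delta$ of the center of an internal $\delta$-ball---but your one-dimensional packing avoids the overcounting inherent in the cube tiling (each ball can meet up to $2^d$ cubes), which is why your constant is better. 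One small remark: strictly speaking you should not assume the diameter is attained, since compactness of $E$ is part of what is being proved; but your argument applies verbatim to any pair $x_0,y_0\in E$ and bounds $|x_0-y_0|$, so this is a cosmetic fix.
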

\begin{proof}
Consider the tiling of $\R^d$ given by $[0,2\delta)^d+2\delta\Z^d$ and for $k\in \Z^d$ let
$C_k=[0,2\delta)^d+2\delta k$.
For every $k\in \Z^d$ such that $C_k\cap E\neq \emptyset$, let $B_\delta(x_k)$ be a ball of radius $\delta$ such
that $B_\delta(x_k)\subset E$ and $B_\delta(x_k)\cap C_k \neq \emptyset$. 
The existence of such a ball is guaranteed by the $\delta$-ball condition. Any such ball can intersect at most
$2^d$ cubes $C_j$ so that
\[
\sharp \{k\in \Z^d \, : \, E\cap C_k \neq \emptyset\}=\frac{1}{|B_\delta|} \sum_{k: C_k\cap E\neq \emptyset}
|B_\delta(x_k)|\le \frac{2^d}{|B_\delta|} |E|,
\]
where $\sharp A$ is the cardinality of the set $A$.
 The fact that $E$ is connected implies that, up to translation, $E\subset [0,4\delta\frac{2^d}{|B_\delta|} m]^d$.
Thus we can conclude that 
\[
 \mathrm{diam} (E)\le \mathrm{diam}\left( \left[0,4\delta\frac{2^d}{|B_\delta|} m\right]^d\right)=\sqrt{d} \,
2^{d+2}\,\frac{m}{\omega_d}\, \delta^{1-d}.
\]
\end{proof}

\begin{remark}\rm
 As already pointed out in the introduction, in some sense the $\delta$-ball condition is the analog of the famous density estimates for problem in which
the
perimeter term is dominant see \cite{giusti}. Since, in the problems we are going to consider, both the
perimeter and the Riesz potential energy are 
of the same order, there is {\it a priori} no hope to get such density estimates from the minimality. It is a
classical feature that for connected sets, these density estimates provide a bound on the diameter \cite{GN}.
\end{remark}

\begin{proposition}\label{regolarita}
Let $d\ge 3$, $\a=d-2$, $\delta>0$ and $E\subset\R^d$ be a compact set which satisfies the $\delta$-ball
condition. 
Then the optimal measure $\mu$ for $\Q(E)=\Q(\partial E)$ can be written as  $\mu= f \Hdm\res\partial E$ with
$\LinfE{f}\le \Q(E)(d-2)\delta^{-1}$.
\end{proposition}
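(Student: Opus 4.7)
Since $\a=d-2$, Lemma \ref{F=G} gives $\spt\mu\subset\partial E$ and $v^\mu\equiv\Q(E)$ on $\overline E$; in particular $v^\mu$ is smooth in the interior of $E$ and $\partial_\nu v^\mu|_{\rm int}\equiv 0$ on $\partial E$. On the complement $E^c$, $v^\mu$ is harmonic, bounded above by $\Q(E)$, takes the constant value $\Q(E)$ on the $C^{1,1}$ boundary $\partial E$, and decays to $0$ at infinity.

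The heart of the argument is a one-sided barrier coming from the internal $\delta$-ball condition. Fix $x\in\partial E$ and let $B_\delta(y_1)\subset E$ be the internal tangent ball, so $y_1=x-\delta\nu_x$; consider
\[
W(z):=\Q(E)\lt(\frac{\delta}{|z-y_1|}\rt)^{d-2}.
\]
Since $y_1\in E$, the function $W$ is harmonic on $E^c$; it satisfies $W(x)=\Q(E)=v^\mu(x)$, $W\to 0$ at infinity, and $W(z)\le \Q(E)=v^\mu(z)$ for $z\in\partial E$ because $|z-y_1|\ge\delta$ whenever $z\notin B_\delta(y_1)\subset E$. The maximum principle in the unbounded domain $E^c$ (both functions vanish at infinity) therefore yields $v^\mu\ge W$ on $E^c$, with equality at $x$.

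Consequently $v^\mu-W\ge 0$ on $E^c$ vanishes at $x\in\partial E$, and comparing difference quotients along the outward normal gives $\partial_\nu v^\mu(x)|_{\rm ext}\ge \partial_\nu W(x)$. A direct radial computation yields $\partial_\nu W(x)=-(d-2)\Q(E)/\delta$, and combining this with $\partial_\nu v^\mu|_{\rm ext}\le 0$ (a consequence of $v^\mu\le v^\mu(x)=\Q(E)$ outside $E$) gives
\[
|\partial_\nu v^\mu(x)|_{\rm ext}|\le \frac{(d-2)\Q(E)}{\delta}.
\]

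Finally, to identify $\mu$ as an absolutely continuous measure I invoke classical elliptic regularity: since $\partial E\in C^{1,1}$ and $v^\mu$ solves the exterior Dirichlet problem with constant data $\Q(E)$, $v^\mu$ is $C^{1,\alpha}$ up to $\partial E$ from the outside, so $\partial_\nu v^\mu|_{\rm ext}$ is a continuous function on $\partial E$. The distributional identity $-\Delta v^\mu=(d-2)|S^{d-1}|\mu$ (coming from $-\Delta|x|^{2-d}=(d-2)|S^{d-1}|\delta_0$) combined with integration by parts on each side of $\partial E$ gives the single-layer jump relation $\mu=f\,\Hdm\res\partial E$ with $f=-\partial_\nu v^\mu|_{\rm ext}/((d-2)|S^{d-1}|)\ge 0$. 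The normal-derivative estimate above then yields $f\le \Q(E)/(|S^{d-1}|\delta)$, which is dominated by the claimed bound $\Q(E)(d-2)/\delta$ since $(d-2)|S^{d-1}|\ge 1$ for $d\ge 3$. The main technical step is the $C^{1,\alpha}$ regularity of $v^\mu$ up to $\partial E$ needed to make sense of the jump formula pointwise; this is standard for $C^{1,1}$ domains with continuous boundary data, but must be invoked carefully because $v^\mu$ is only given a priori as the Riesz potential of an unstructured measure on $\partial E$.
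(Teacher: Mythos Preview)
Your argument is essentially identical to the paper's: internal-ball barrier for the exterior harmonic potential, maximum-principle comparison at the touching point to bound the exterior normal derivative, and identification of $\mu$ as the (one-sided) normal derivative of $v^\mu$ on $\partial E$ via integration by parts, justified by $C^{1,\beta}$ regularity up to the $C^{1,1}$ boundary.

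One small remark on constants. You carefully keep track of the normalization $-\Delta |x|^{2-d}=(d-2)|S^{d-1}|\,\delta_0$ and obtain $f\le \Q(E)/(|S^{d-1}|\delta)$, whereas the paper simply writes ``$-\Delta v=\mu$'' and arrives at the stated constant $(d-2)\Q(E)/\delta$. Your attempt to reconcile the two via ``$(d-2)|S^{d-1}|\ge 1$'' is not valid in every dimension (the surface area $|S^{d-1}|$ decays superexponentially in $d$, so the product drops below $1$ for large $d$). This is harmless here: your bound is the correct one and is in fact sharper than the stated bound whenever the inequality you wrote does hold; and for the only downstream use of this proposition (Theorem~\ref{stabilitycore}) any constant depending only on $d$ suffices.
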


\begin{proof}
By Lemma \ref{F=G} we know that the optimizer $\mu$ is concentrated on
$\partial E$.
Denote by $v=v_{d-2}^\mu$ the potential related to $\mu$ on $E$. By Lemma \ref{F=G}, we know that $v=\Q(E)$ on
$E$,
and that $-\Delta v=\mu$. 
By classical elliptic regularity (see for instance \cite[Cor. 8.36]{gilTru}), $v$ is regular in $\R^d\backslash E$,
and $C^{1,\beta}$ up to the boundary of $E$.
Consider now a point $x\in\partial E$ and let $y\in E$ such that the ball $B_\delta(y)$ is contained in $E$ and is
tangent to $\partial E$ in $x$. The existence of such a $y$ is guaranteed by the $\delta$-ball condition
satisfied by $E$. Let $u$ be a solution of 
\[
 \Delta u=0\quad \textrm{ in} \,\, B_\delta^c(y);\qquad u=v(x)=\Q(E)\quad \textrm{on }\,\,\partial B_\delta(y). 
\]
Notice that $u(z)=\frac{\Q(E)\delta^{d-2}}{|z-y|^{d-2}}$ out of $B_\delta(y)$. By the maximum principle for
harmonic functions, $u\le \Q(E)$ on $\partial E$. Thus, again by the maximum principle,
applied to $u-v$, we get that $v\ge u$ on $\R^d\setminus E$. Since $u(x)=v(x)$, 
\begin{equation}\label{barrier1}
 |\nabla v(x)|\le|\nabla u(x)|=\Q(E)(d-2)\delta^{-1}.
\end{equation}

\noi Let us prove that $\mu=|\nabla v| \Hdm\res \partial E$. For this, let $x\in \partial E$ and $r>0$ and
consider
a test  function
 $\vphi\in C^{\infty}_c(\R^d)$. Then we have
\begin{equation}\label{barrier2}
 \begin{aligned}
 \int_{\partial E} \vphi d\mu&=-\int_{\R^d} \vphi \Delta v =\int_{\R^d} \langle\nabla \vphi, \nabla v\rangle \
dy\\
&=\int_{E^c} \langle\nabla \vphi, \nabla v\rangle \ dy=\int_{\partial E} \vphi \langle\nabla v, \nu^E
\rangle d \Hdm
\end{aligned}
\end{equation}
\noi
where $\nu^E$ is the external normal to $E$. Since $v$ is constant on $\partial E$, its  tangential derivative
is 
zero. Thus, since $v<\Q(E)$ on $\R^d\backslash \ovE$ we have that  $\langle\nabla v, \nu^E\rangle\ge 0$.
Therefore, $\langle\nabla v, \nu^E\rangle=|\nabla v|$ on $\partial E$. Hence, by \eqref{barrier2} we conclude that
for every test function $\vphi$,
\[\int_{\partial E} \vphi d\mu= \int_{\partial E} \vphi |\nabla v| d \Hdm,\]
which is equivalent to the claim $\mu=|\nabla v| \Hdm\res \partial E$.

\end{proof}

\section{Non-existence of minimizers}\label{droplets3}
 
\begin{definition}
Let $d\ge2$ and $\alpha>0$. For every $Q>0$ and every open  set $E\subset\R^d$ we define the functionals,
\begin{equation}\label{F}
\F(E):=\P(E)+Q^2 \Q(E),
\end{equation}
and
\begin{equation}\label{G}
 \G(E):=\P(E)+Q^2 \Q(\partial E).
\end{equation}
\end{definition}

\noi Notice that by Lemma \ref{F=G}, for $\a\in(0,d-2]$ the functionals $\F$ and $\G$ coincide.
Notice also that $\F(E)\equiv +\infty$ if $\a\ge d$, and $\G(E)\equiv +\infty$ if $\a\ge d-1$.

\noi In this section we consider a closed, connected, regular set $\O\subset \R^d$ (not necessarily bounded)
of
measure $|\Om|>m$  and address the following problems:
\begin{equation}\label{pb}
\inf_{|E|=m,\, E\subset\Omega}\F(E),
\end{equation}
and 
\begin{equation}\label{pbb}
\inf_{|E|=m,\, E\subset\Omega}\G(E),
\end{equation}
where the (implicit) parameter $\a$ belongs to $(0,d)$. 


\begin{theorem}\label{nonexistR}
For every $\a\in (0,d-1)$, there holds
\[
\inf_{|E|=m}\F(E)=\inf_{|E|=m}\G(E)=\min_{|E|=m} \P(E)=\left(\frac{m}{\omega_d}\right)^{\frac{d-1}{d}}\P(B).
\]
In particular, problems \eqref{pb} and \eqref{pbb} do not admit minimizers when $\Omega=\R^d$.
\end{theorem}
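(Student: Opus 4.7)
The plan is to combine the trivial lower bound from the isoperimetric inequality with an upper-bound construction in which the charge is shed onto a vanishing sub-population of micro-droplets placed arbitrarily far from a main ball. For any $E$ with $|E|=m$ one has $\F(E)\ge P(E)\ge (m/\omega_d)^{(d-1)/d}P(B)$, and since the infimum defining $\Q(E)$ is taken over a larger class of measures than that for $\Q(\partial E)$ we also have $\G(E)\ge \F(E)$ pointwise; so both infima are bounded below by $(m/\omega_d)^{(d-1)/d}P(B)$, and the content of the statement is the matching upper bound.

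For the construction, fix a large integer $N$ and a small radius $r=r(N)$ and set $E_N:=F\cup \bigsqcup_{i=1}^N B_r(x_i)$, where $F$ is a round ball of volume $m-N\omega_d r^d$ and the centers $x_i$ are at mutual distance at least $L_N\to+\infty$ from each other and from $F$. Then $P(E_N)=P(F)+Nd\omega_d r^{d-1}$. For the Riesz term I would use the test measure $\mu_N:=N^{-1}\sum_{i=1}^N \mu_i$, where $\mu_i$ is the optimizer of $\Q(B_r(x_i))$; the scaling \eqref{eqscale} and a crude $L_N^{-\alpha}$ bound on the cross terms (justified by the supports being disjoint and at distance at least $L_N$) yield
$$\Q(E_N)\le \Q(\mu_N)\le \frac{1}{N}\,r^{-\alpha}\,\Q(B)+L_N^{-\alpha}.$$
It then suffices to choose $r,N,L_N$ so that $Nr^d\to 0$ (so $P(F)\to (m/\omega_d)^{(d-1)/d}P(B)$), $Nr^{d-1}\to 0$ (so the total perimeter of the droplets vanishes), and $N^{-1}r^{-\alpha}\to 0$ (so the Riesz energy vanishes). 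The last two conditions amount to $N^{-1/\alpha}<r<N^{-1/(d-1)}$, a non-empty window precisely under the hypothesis $\alpha<d-1$; a concrete admissible choice is $r=N^{-1/(d-1)-\eta}$ for a small $\eta>0$, together with $L_N=N$. This gives $\F(E_N)\to (m/\omega_d)^{(d-1)/d}P(B)$, and the identical estimate for $\G$ follows upon replacing each $\mu_i$ with the optimizer of $\Q(\partial B_r(x_i))$, whose finiteness is again guaranteed by $\alpha<d-1$.

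Non-existence in $\R^d$ is then immediate: any minimizer $E^*$ would force $P(E^*)\le (m/\omega_d)^{(d-1)/d}P(B)$ and hence be a ball by the rigidity of the isoperimetric inequality, contradicting $\F(E^*)=P(E^*)+Q^2\Q(E^*)>(m/\omega_d)^{(d-1)/d}P(B)$ since $\Q$ is strictly positive on balls. The only delicate point of the argument is the exponent bookkeeping: perimeter favors few, not-too-small droplets while capacity favors many very small ones, and the feasibility window for $r$ shrinks to a single point as $\alpha\uparrow d-1$, which simultaneously explains the sharpness of the hypothesis and the physical counter-intuitiveness of the conclusion.
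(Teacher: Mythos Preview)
Your proposal is correct and follows essentially the same construction as the paper: a large uncharged ball of volume $m-N\omega_d r^d$ together with $N$ small charged balls of radius $r=N^{-\beta}$ sent to infinity, with $\beta$ chosen in the window $(1/(d-1),1/\alpha)$ so that both $Nr^{d-1}\to 0$ and $N^{-1}r^{-\alpha}\to 0$. Your version is slightly more explicit in spelling out the lower bound via the isoperimetric inequality, the pointwise inequality $\G\ge\F$, and the non-existence argument from rigidity, but the core idea and the exponent bookkeeping are identical.
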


\begin{proof}
Let $N\in \N$ and consider a number $\beta$ which will be fixed later on. Consider $N$ balls of radius
$r_N=N^{-\beta}$
which we can consider mutually infinitely far away (since sending them away leaves unchanged the perimeter
and decrease the potential interaction energy), and put on each of these balls a charge $\frac{1}{N}$. Let $V_N= N r_N^d \omega_d$ 
be their total volume and consider the set $E$  given by the union of these balls with a (non-charged) 
ball of volume $m-V_N$. If we choose $\beta\in(1/(d-1),1/\a)$, then we get
\begin{equation}\label{condbeta} 
\lim_{N\to +\infty} N r_N^{d-1}= 0 \qquad \textrm{and} \qquad \lim_{N\to +\infty} \frac{1}{N} \frac{1}{r_N^\a}=0.
\end{equation}
which implies that $V_N\to 0$ and 
\[
\left(\frac{m}{\omega_d}\right)^{\frac{d-1}{d}}\P(B)\le \P(E)+Q^2 \Q(E)\le
\left(\frac{m-V_N}{\omega_d}\right)^{\frac{d-1}{d}}\P(B)+C\lt( Nr_N^{d-1}+ 
\frac{Q^2}{N} \frac{1}{r_N^\a}\rt).
\]
Since the  right-hand
side
converges to $ \left(\frac{m}{\omega_d}\right)^{\frac{d-1}{d}}\P(B)$, as $N$ tends to $+\infty$, the claim follows.
\end{proof}

The following result follows directly from the construction made in the previous theorem.

\begin{corollary}
Let $\alpha \in (0,d-1)$ and $m>0$.  For every $0<\delta<(m/\omega_d)^\frac 1 d$ there exists a charge $Q_\delta=Q_\delta(\a,m)$
such that $Q_\delta\to 0$ as $\delta\to 0$, and the ball of volume $m$ is not the minimizer of $\F$, 
among sets in $\Kd$ with volume $m$ and charge $Q>Q_\delta$. 
\end{corollary}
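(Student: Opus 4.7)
The plan is to specialise the construction appearing in the proof of Theorem \ref{nonexistR} to a single admissible competitor in $\Kd$. Write $r_B := (m/\omega_d)^{1/d}$ and, for $\delta \le r_B/2^{1/d}$, set $r_\delta := (r_B^d - \delta^d)^{1/d}$, so that $r_\delta \ge \delta$. For points $x_1, x_2 \in \R^d$ with $|x_1 - x_2|$ arbitrarily large, consider
\[
E_\delta \,:=\, B_{r_\delta}(x_1) \cup B_\delta(x_2) \,\in\, \Kd, \qquad |E_\delta| = m.
\]
The goal is to show that $\F(E_\delta) < \F(B_{r_B})$ whenever $Q^2$ exceeds an explicit multiple of $\delta^{d-1-\a}$.

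For the Riesz term, separating the two components eliminates the cross interaction; optimising the split of the unit mass between the balls using the scaling $\Q(B_r) = r^{-\a}\Q(B)$ yields
\[
\Q(E_\delta) \,\le\, \frac{\Q(B)}{r_\delta^\a + \delta^\a}.
\]
The volume identity $r_\delta^d + \delta^d = r_B^d$ combined with strict subadditivity of $s \mapsto s^{\a/d}$ (valid because $\a < d$) gives $r_\delta^\a + \delta^\a > r_B^\a$, and a first-order expansion in $\delta$ then produces
\[
\Q(B_{r_B}) - \Q(E_\delta) \,\ges\, \delta^\a \qquad \text{for $\delta$ small},
\]
with an implicit constant depending only on $\a$ and $m$. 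On the perimeter side, since $r_\delta \le r_B$, one simply has $P(E_\delta) - P(B_{r_B}) \le P(B_\delta) = d\,\omega_d\,\delta^{d-1}$.

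Combining these bounds, $\F(E_\delta) < \F(B_{r_B})$ as soon as $Q^2 > C(\a, m)\,\delta^{d-1-\a}$. Setting $Q_\delta := C'(\a, m)\,\delta^{(d-1-\a)/2}$ and using the hypothesis $\a < d - 1$ makes the exponent strictly positive, so $Q_\delta \to 0$ as $\delta \to 0$, as required. For the residual range $\delta \in [r_B/2^{1/d}, r_B)$ in which the two-ball construction fails, any fixed non-spherical admissible competitor provides a valid (non-vanishing) $Q_\delta$, which is irrelevant for the limit. The only mildly delicate step is the $\delta^\a$ lower bound on the Riesz gain: one has to verify that the $\delta^\a$ benefit from spreading charge strictly dominates the $O(\delta^d)$ loss from shrinking the large ball, which is exactly what $\a < d$ guarantees.
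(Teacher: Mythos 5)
Your proof is correct and follows the same line as the paper, which simply points to the construction in Theorem~\ref{nonexistR}: you specialize that many-small-balls construction to a single extra ball of radius exactly $\delta$, split the charge optimally between the two components, and balance the $O(\delta^{d-1})$ perimeter cost against the $\gtrsim Q^2\delta^\a$ Riesz gain, which is precisely what makes the condition $\a<d-1$ enter. Two small remarks: the cross-interaction term is not ``eliminated'' but merely made arbitrarily small by taking $|x_1-x_2|$ large, which you should say explicitly before comparing with $\delta^\a$; and your handling of the residual range $\delta\ge r_B/2^{1/d}$ rests on the unproved assertion that some non-spherical set in $\Kd$ of volume $m$ has strictly smaller Riesz energy than the ball (this is clear for $\a\ge d-2$ by isocapacitary-type inequalities, less obvious for $\a<d-2$), though it does not affect the decay $Q_\delta\to0$ which is the real content of the corollary.
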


We now consider the case of bounded  $\O$ where the situation is more involved. 

\begin{theorem}\label{main}
Let $\Omega$ be a   compact subset   of $\R^d$ with smooth boundary, and let 
$0<m<|\Omega|$. Let $E_0$ be a solution of the constrained isoperimetric problem
\begin{equation}\label{constrainediso}
\min \lt\{\P(E): E\subset \Omega,\ |E|=m\rt\}.
\end{equation}
Then, for $\a\in(0,d-1)$ and $Q>0$ we have
\begin{equation}\label{ill}
\inf_{|E|=m,\, E\subset \Omega}\F(E)=\inf_{|E|=m,\, E\subset \Omega}\G(E)
=\P(E_0)+Q^2 \Q(\Omega).
\end{equation}

\end{theorem}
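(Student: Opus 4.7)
The proof combines a monotonicity lower bound with an upper-bound construction that disperses the charge over many small droplets around a near-isoperimetric bulk, in the spirit of Theorem \ref{nonexistR}. For the lower bound, any admissible $E$ satisfies $P(E)\ge P(E_0)$ by definition of $E_0$, while $\Q(E),\Q(\partial E)\ge \Q(\Omega)$ because any measure of unit mass supported on $E$, or on $\partial E\subset\overline E\subset\Omega$, is a competitor for $\Q(\Omega)$. Summing these two inequalities yields $\F(E),\G(E)\ge P(E_0)+Q^2\Q(\Omega)$.

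For the upper bound, fix $\eta>0$ and apply Proposition~\ref{density} to pick $\mu_\eta=f_\eta\,\Hd\res\Omega$ with $f_\eta\in L^\infty(\Omega)$, $\int_\Omega f_\eta\,dx=1$ and $\Q(\mu_\eta)\le\Q(\Omega)+\eta$. Since $\a<d-1$, fix $\beta\in(1/(d-1),1/\a)$, set $r_N:=N^{-\beta}$, and partition $\Omega$ into $N$ Borel cells $R_i^N$ of diameter $O(N^{-1/d})$ with marked points $x_i^N\in R_i^N$; let $q_i^N:=\mu_\eta(R_i^N)$. Pick an interior point $y_0\in\mathrm{int}(E_0)$ and choose $\rho_N\to 0$ so that
\[
E_N:=\bigl(E_0\setminus B_{\rho_N}(y_0)\bigr)\cup\bigcup_{i=1}^N B_{r_N}(x_i^N)
\]
has volume exactly $m$, translating inward by at most $r_N$ any ball that sticks out of $\Omega$.

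Subadditivity of perimeter then gives $P(E_N)\le P(E_0)+O(\rho_N^{d-1})+Nd\omega_d r_N^{d-1}\to P(E_0)$, since $\beta>1/(d-1)$. To bound $\Q(E_N)$, we test with the competitor $\nu_N$ placing mass $q_i^N$ uniformly on $B_{r_N}(x_i^N)$ (or on $\partial B_{r_N}(x_i^N)$ for the $\G$ version, with identical scaling). Its self-interaction is bounded by $\|f_\eta\|_\infty \max_i |R_i^N|\cdot r_N^{-\a}\Q(B_1)=O(N^{\beta\a-1})\to 0$ since $\beta<1/\a$, while the off-diagonal sum $\sum_{i\ne j}q_i^N q_j^N|x_i^N-x_j^N|^{-\a}$ converges to $\iint f_\eta(x)f_\eta(y)|x-y|^{-\a}\,dx\,dy=\Q(\mu_\eta)$ as a Riemann sum on shrinking neighborhoods away from the diagonal. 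Hence $\limsup_N\F(E_N)\le P(E_0)+Q^2(\Q(\Omega)+\eta)$, and letting $\eta\to 0$ closes the bound. The most delicate step is making this Riemann-sum convergence rigorous near the singular diagonal: the $L^\infty$ bound provided by Proposition~\ref{density} furnishes $q_i^N\le\|f_\eta\|_\infty|R_i^N|$, which is precisely what controls the near-diagonal contributions and prevents the singularity of $|x-y|^{-\a}$ from producing more than the finite double integral $\Q(\mu_\eta)$.
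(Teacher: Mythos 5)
Your proposal follows essentially the same route as the paper's proof: use Proposition~\ref{density} to reduce to an $L^\infty$ density $f$ with $\I_\a^\Omega(f)\le\Q(\Omega)+\eps$, discretize the charge onto $N$ small balls/spheres whose aggregate perimeter vanishes while the discrete Riesz energy approximates $\I_\a^\Omega(f)$, glue these onto $E_0$ with a small ball removed to restore the volume, and pair this with the trivial lower bound $P(E)\ge P(E_0)$, $\Q(E)\ge\Q(\Omega)$. The one step you only gesture at — ``Riemann-sum convergence near the singular diagonal'' — is where the paper introduces a third intermediate scale $\delta$ with $r\ll\lambda\ll\delta$ (cell size $\lambda$, ball radius $r$), splitting the discrete energy into self-interactions $I_1$, near-diagonal pairs at distance $<2\delta$ giving $I_2=O(\|f\|_\infty^2\delta^d/\lambda^\a)$, and far pairs giving $I_3\le\I_\a^\Omega(f)(1+C\lambda/\delta)(1+Cr/\delta)$; making that three-scale decomposition explicit is what turns your sketch into a complete argument.
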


\begin{proof} 

We divide the proof into three steps. \\

\noi{\it Step 1.}  For $\eps>0$ and $f\in L^\infty(\Omega )$, with $f\ge 0$ and $ \ds\int_\Omega  f dx=1$, 
we shall construct a measure $\tilde\mu_\eps$ with $\spt (\tilde\mu_\eps)\subset  \Omega$, 
$\tilde\mu_\eps(\Omega)=1$, satisfying  
\begin{equation}\label{eqsupp}
\P(\spt(\tilde\mu_\eps))\le \eps 
\end{equation}
and 
\begin{equation}\label{eqff}
\Q(\tilde\mu_\eps)\le \I_\a^ \Omega(f)+\eps.
\end{equation}
Let $\delta>\lambda>0$ be small parameters to be fixed later and consider the tiling of the space given by 
$[0,\lambda)^d+\lambda\Z^d$. For every $k\in \Z^d$ such that $(\lambda k+[0,\lambda)^d)\cap \Omega\ne
\emptyset$,
we let $C_k=\lambda k+[0,\lambda)^d$ and denote by $x_k$ be the centre of $C_k$.
Notice that the number $N$ of such squares $C_k$ is bounded by $C(\Omega )\lambda^{-d}$. 
Letting $\ds f_k:=\int_{C_k} f \ dx$, it holds
\begin{equation}\label{I0}
\begin{aligned}
\sum_{|x_k-x_j|\ge2\delta}  \frac{f_k f_j}{|x_k-x_j|^\a}&=\sum_{|x_k-x_j|\ge2\delta} \int_{C_k\times C_j}
\frac{f(x) f(y)}{|x-y|^\a}\, \frac{|x-y|^\a}{|x_k-x_j|^\a} dx\ dy
\\
&\le \sum_{|x_k-x_j|\ge2\delta} \int_{C_k\times C_j}
\frac{f(x) f(y)}{|x-y|^\a}\, \frac{\left(|x_k-x_j|+2\lambda\right)^\a}{|x_k-x_j|^\a} dx\ dy
\\
&\le\sum_{|x_k-x_j|\ge2\delta} \int_{C_k\times C_j}
\frac{f(x) f(y)}{|x-y|^\a}\, \left( 1+C(\alpha)\frac\lambda\delta\right) dx\ dy
\\
\end{aligned}\end{equation}
where 
we used the fact that
\[
\sum_{|x_k-x_j|\ge2\delta}\int_{C_k\times C_j}
\frac{f(x) f(y)}{|x-y|^\a}\,dxdy\le\int_{\Omega \times \Omega }
\frac{f(x) f(y)}{|x-y|^\a}\,dxdy=\I_\a^\Omega (f)<\infty.
\]
\noi
Let now $r=(\lambda/2)^\beta$, with $\beta>1$. If ${\rm dist}(x_k,\R^d\setminus \Omega)\le r$, 
we replace the point $x_k$ with a point $\tilde x_k\in C_{j(k)}$, with $|\tilde x_k-x_{j(k)}|\ge \lambda/4$,
where $C_{j(k)}\subset \Omega$ is a cube adjacent to $C_k$.
For simplicity of notation, we still denote $\tilde x_k$ by $x_k$.
\noi
We consider $N$ balls of radius $r$ centered at the points $x_k$, and we set
\[
\tilde\mu_\eps:= \sum_{k} \frac{f_k}{\H^{d-1}(\partial B_r)} \chi_{\partial B_r(x_k)}.
\] 
Notice that such measures are suitable competitor in the definition of both the minima appearing in the definition
of $\mathcal{F}_{\a,Q}$ and $\mathcal{G}_{\a,Q}$.
By construction it holds ${\rm spt}(\tilde\mu_\eps)\subset \Omega$ and
$\tilde\mu_\eps(\Omega)= \ds\int_\Omega f dx=1$.
We have
\begin{align*}
\Q(\tilde\mu_\eps)&=\sum_{j,k} \frac{f_k f_j}{\H^{d-1}(B_r)^2} \int_{\partial B_r(x_j)\times \partial B_r(x_k)}
\frac{d\H^{d-1}(x)\, d\H^{d-1}(y)}{|x-y|^\a}
\\
&= \sum_k\frac{f_k^2}{\H^{d-1}(B_r)^2} \int_{\partial B_r(x_k)\times \partial B_r(x_k)} \frac{d\H^{d-1}(x)\,
d\H^{d-1}(y)}{|x-y|^\a}
\\
&+\sum_{|x_j-x_k|<2\delta,\,k\ne j} \frac{f_k f_j}{\H^{d-1}(B_r)^2} \int_{\partial B_r(x_j)\times \partial
B_r(x_k)} \frac{d\H^{d-1}(x)\, d\H^{d-1}(y)}{|x-y|^\a}
\\
&+\sum_{|x_j-x_k|\ge 2\delta} \frac{f_k f_j}{\H^{d-1}(B_r)^2} \int_{\partial B_r(x_j)\times \partial B_r(x_k)}
\frac{d\H^{d-1}(x)\, d\H^{d-1}(y)}{|x-y|^\a}\\
&=I_1+I_2+I_3.
\end{align*}
Moreover we have that
\begin{equation}\label{I1}
I_1 	\le C N \|f\|^2_{L^\infty(\Omega)} |C_k|^2\frac{1}{r^\a} \le C
\|f\|^2_{L^\infty(\Omega)}\lambda^{d-\a\beta},
\end{equation}
and 
\begin{equation}\label{I2}
I_2 \le C \delta^d N^2 \|f\|^2_{L^\infty(\Omega)} |C_k|^2\frac{1}{\lambda^\a}\le
C\|f\|^2_{L^\infty(\Omega)}\frac{\delta^d}{\lambda^\a}.
\end{equation}
Eventually, from \eqref{I0} it follows
\begin{equation}\label{I3}
\begin{aligned}
I_3 &= \sum_{|x_j-x_k|\ge 2\delta} 
\frac{f_k f_j}{|x_k-x_j|^\a}\frac{1}{\H^{d-1}(B_r)^2} \int_{\partial B_r(x_j)\times \partial B_r(x_k)}
\frac{|x_k-x_j|^\a}{|x-y|^\a} \
d\H^{d-1}(x)\,d\H^{d-1}(y)\\
&\le \sum_{|x_k-x_j|\ge2\delta} 
\frac{f_k f_j}{|x_k-x_j|^\a}\left(1+C(\alpha)\frac{r}{\delta}\right)\\
&\le \I_\a^\Omega (f)\left( 1+C(\alpha)\frac\lambda\delta\right) \left( 1+C(\alpha)\frac{r}{\delta}\right)\\
&\le  \I_\a^\Omega (f) + C(\a) \I_\a^\Omega (f)\frac{\lambda}{\delta}.
\end{aligned}\end{equation}
Letting $\lambda=\delta^\gamma$, from \eqref{I1}, \eqref{I2}, \eqref{I3} we then get
\[
\Q(\tilde\mu_\eps)=I_1+I_2+I_3\le  \I_\a^\Omega (f) + C(\a) \I_\a^\Omega (f)\delta^{\gamma-1}+C
\|f\|^2_{L^\infty(\Omega)}\left(\delta^{\gamma(d-\alpha\beta)}+\delta^{d-\alpha\gamma}\right).
\]
Choosing $1<\beta< d/\alpha$ and $1<\gamma< d/\alpha$, for $\delta$ small enough we obtain \eqref{eqff}.

We now show that \eqref{eqsupp} also holds. To this aim, we notice that
\begin{equation}\label{estro}
\H^{d-1}(\spt(\tilde\mu_\eps))\le CNr^{d-1}=CN\lambda^{\beta(d-1)}=C\lambda^{\beta(d-1)-d}
\end{equation}
so that, for $\lambda$ small enough, \eqref{eqsupp} follows from \eqref{estro} 
by letting $d/\alpha>\beta>d/(d-1)$, choice which is allowed  since $\alpha<d-1$.\\

\noi{\it Step 2.} Let now $E_0$ be a solution of the constrained isoperimetric problem \eqref{constrainediso}, 
and 
let $$E_\eps:= \left(E_0\cup \bigcup_{k} B_r(x_k)\right)\backslash B_\eta,
\qquad \quad \mu_\eps:=\frac{\tilde\mu_\eps\res{E_\eps}}{1-\tilde\mu_\eps(B_\eta)}\,,
$$ 
where $B_\eta\subset E_0$ is a ball such that $|E_\eps|=m$. Notice that ${\rm spt}(\mu_\eps)\subset E_\eps$ and 
$\mu_\eps(E_\eps)=1$.
\noi
Since
\[
 |B_\eta|=\left|E_0\cup\bigcup_{k} B_r(x_k)\right|-|E_\eps|\le\left|\bigcup_{k} B_r(x_k)\right|,
\]
by \eqref{estro} we have
$$|B_\eta|^{\frac{d-1}{d}}\le \left|\bigcup_{k} B_r(x_k)\right|^{\frac{d-1}{d}}\le C \P\left(\bigcup_{k}
B_r(x_k)\right)\le C\lambda^{\beta(d-1)-d},$$
so that $\eta\le C\lambda^{\beta-\frac{d}{d-1}}$. In particular, recalling \eqref{eqff},
for $\lambda$ sufficiently small the measure $\mu_\eps$ satisfies
\begin{equation}\label{eqffbis}
\Q(\mu_\eps) \le \Q(\tilde\mu_\eps) +\eps
\le \I_\a^ \Omega(f)+2\eps.
\end{equation} 
{}From \eqref{eqffbis} we then get
\begin{equation}\label{eqfuko}
\varlimsup_{\eps \to 0} \P(E_\eps)+Q^2\Q(\mu_\eps)=\P(E_0)+Q^2 \Q^\Omega(f).
\end{equation}

\smallskip

\noi{\it Step 3.}
By Proposition \ref{density} we can find a function $f\in L^\infty(\Omega)$ such that $\ds \int_\Omega f  dx=1$ and 
$\I_\a^\Omega(f)\le \Q(\Omega)+\eps$. Thus \eqref{ill} follows by \eqref{eqfuko} and a diagonal argument.

\end{proof}

Thanks to Theorem \ref{main} we are able to prove 
\begin{theorem}\label{corunstablemain}
For any $\a\in (0,d-1)$ and $Q>0$, the functional $\F$ does not admit 
local volume-constrained minimizers with respect to the $L^1$ or the Hausdorff topology. 
\end{theorem}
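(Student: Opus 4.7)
The plan is to show that any set $E$ with $|E|=m$ and $\F(E)<\infty$ admits, inside any $\sigma$-neighbourhood for both the $L^1$ and Hausdorff topologies simultaneously, a competitor $E'$ with $|E'|=m$ and $\F(E')<\F(E)$. The strategy is to reuse the construction in the proof of Theorem \ref{main}, but anchored at $E$ instead of at the isoperimetric minimizer $E_0$; the new ingredient is a strict capacity comparison between $E$ and a thin enlargement.

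Fix $\sigma>0$ and set $\Omega:=E+B_{\sigma/2}$. The heart of the argument is the strict inequality $\Q(\Omega)<\Q(E)$, which I would prove as follows. Let $\mu$ be the unique (Lemma \ref{uniq}) minimizer of $\Q(E)$ and $v^\mu$ its Riesz potential. By Lemma \ref{ELmu}, $v^\mu=\Q(E)$ $\a$-q.e.\ on $\spt(\mu)\subset E$, while on $\R^d\setminus\spt(\mu)$ the potential $v^\mu$ is real-analytic and decays to zero at infinity. The open shell $\Omega\setminus E$ is non-empty and lies inside the unbounded connected component of $\R^d\setminus\spt(\mu)$; if $v^\mu$ were identically $\Q(E)$ on any open subset of that component, analyticity and unique continuation would extend the identity to the whole component, contradicting the decay at infinity. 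Hence there exist $x_0\in\Omega\setminus E$, $\rho>0$ and $\kappa>0$ with $B_\rho(x_0)\subset\Omega\setminus\spt(\mu)$ and $v^\mu\le\Q(E)-\kappa$ on $B_\rho(x_0)$. Taking a probability measure $\nu$ supported in $B_\rho(x_0)$ with $\Q(\nu)<\infty$ and the convex combination $\mu_t:=(1-t)\mu+t\nu$, one has $\Q(\mu_t)\le\Q(E)-2t\kappa+O(t^2)$, so for $t$ small $\mu_t$ is admissible for $\Q(\Omega)$ with strictly smaller energy.

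I would then rerun Steps 1 and 2 of the proof of Theorem \ref{main} on this $\Omega$, starting from $f\in L^\infty(\Omega)$ with $\int f\,dx=1$ and $\I_\a^\Omega(f)\le\Q(\Omega)+\delta$ (Proposition \ref{density}), $\delta$ to be fixed later. This yields measures $\tilde\mu_\lambda$ supported on $N$ spheres $\partial B_r(x_k)\subset\Omega$ of total mass one, with $\Q(\tilde\mu_\lambda)\le\I_\a^\Omega(f)+o(1)$ and $Nr^{d-1}\to 0$ as $\lambda\to 0$. Replacing $E_0$ by $E$ in Step 2, I set
\[
E'_\lambda:=\Bigl(E\cup\bigcup_k B_r(x_k)\Bigr)\setminus B_\eta,
\]
where $B_\eta\subset E$ is a small ball disjoint from the finitely many $B_r(x_k)$ (centred at a Lebesgue density-one point of $E$, say), whose radius is adjusted to restore $|E'_\lambda|=m$; as in the original proof $\eta\to 0$ with $\lambda$. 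Subadditivity of the perimeter yields $\P(E'_\lambda)\le\P(E)+Nd\omega_d r^{d-1}+d\omega_d\eta^{d-1}=\P(E)+o(1)$, and $\tilde\mu_\lambda$ is admissible for $\Q(E'_\lambda)$ since it is supported in $E'_\lambda$, so $\Q(E'_\lambda)\le\Q(\Omega)+\delta+o(1)$.

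Combining these bounds,
\[
\F(E'_\lambda)-\F(E)\le o(1)+Q^2\delta-Q^2\bigl(\Q(E)-\Q(\Omega)\bigr),
\]
which is negative once $\delta<\tfrac12(\Q(E)-\Q(\Omega))$ and $\lambda$ is sufficiently small. The inclusions $E\setminus B_\eta\subset E'_\lambda\subset\Omega$ force $d_H(E,E'_\lambda)\le\max(\sigma/2,\eta)<\sigma$ and $|E\triangle E'_\lambda|\le N\omega_d r^d+\omega_d\eta^d<\sigma$ for $\lambda$ small, yielding the required competitor in both topologies at once. The main obstacle is the strict capacity drop $\Q(\Omega)<\Q(E)$: the rest is a quantitative reuse of Theorem \ref{main}, but establishing this drop requires the potential-theoretic unique-continuation argument sketched above to rule out the possibility that $v^\mu$ remains at its maximum value $\Q(E)$ throughout the whole thin shell $\Omega\setminus E$.
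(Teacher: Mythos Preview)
Your approach is essentially the paper's: enlarge $E$ to a neighbourhood $\Omega$, use the strict capacity drop $\Q(\Omega)<\Q(E)$, and rerun the construction of Theorem~\ref{main} with $E$ in place of the isoperimetric minimizer $E_0$. The paper states this last step only as ``by Theorem~\ref{main}''; you make explicit what is implicit there, namely that Steps~1--2 of that proof work verbatim with any base set of the right volume, producing competitors that are $L^1$- and Hausdorff-close to that base set.

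The one genuine difference is how the strict inequality is obtained. You argue via real-analyticity of $v^\mu$ off $\spt(\mu)$ and its decay at infinity, which is correct (though your claim that the whole shell $\Omega\setminus E$ lies in the \emph{unbounded} component of $\R^d\setminus\spt(\mu)$ is too strong; you only need, and only use, one point there). The paper instead gets the inequality in one line from Lemma~\ref{F=G} and uniqueness (Lemma~\ref{uniq}): the optimal measure for $\Omega$ is supported on $\partial\Omega$ when $\a\le d-2$ and on all of $\overline\Omega$ when $\a>d-2$, so in either case it cannot be supported in $E$, hence cannot coincide with the optimal measure for $E$, and strict inequality follows. This is shorter and avoids the analyticity/unique-continuation machinery.

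A small technical point: Proposition~\ref{density}, which you invoke, is stated for smooth connected closed sets, whereas your $\Omega=E+B_{\sigma/2}$ need not be smooth or connected. The paper sidesteps this by choosing smooth $\Omega_\eps$ from the outset; you can do the same, or simply observe that the only property of $\Omega$ used in the proof of Proposition~\ref{density} is the lower density bound $|B_\eps(x)\cap\Omega|\ge c\,\eps^d$, which does hold for a Minkowski enlargement.
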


\begin{proof}
Let $K$ be a compact set, and let $\Om_\eps$, for $\eps>0$, be a family of open sets with smooth boundary,
such that $K\subset\Omega_\eps$ for any $\eps>0$, 
and $\Omega_\eps\to K$ as $\eps\to 0$ in the Hausdorff topology (in particular $|\Omega_\eps\setminus K|\to 0$ as $\eps\to 0$).
By Theorem \ref{main}, it is enough to show that $\Q(\Omega_\eps)<\Q(K)$ for any $\eps>0$ (with strict inequality),
which follows directly from Lemma \ref{F=G}.
\end{proof}

\begin{remark}\rm
Notice that when $\a\in(d-2,d-1)$, Problem \eqref{pbb} relaxes to its ``natural'' domain, in the sense that the
infimum is $P(E_0)+Q^2\Q(\Omega)$ and not $P(E_0)+Q^2\Q(\partial \Omega)$ as one might expect.
\end{remark}

\begin{remark}\rm
Notice also that as soon as $\Omega$ contains a ball of volume $m$ then the solution of the isoperimetric problem \eqref{constrainediso} is a ball.
\end{remark}

\begin{remark}\rm
In the statement  of Theorem \ref{main} it is possible to replace $P(E)$ by the relative perimeter $P(E;\Omega)$ (see for instance \cite{amfupa}) almost without changing the proof.
 In other words, under the hypotheses of Theorem \ref{main} we have that
\begin{equation}
\inf_{|E|=m,\, E\subset \Omega}P(E;\Omega)+Q^2\Q(E)=\inf_{|E|=m,\, E\subset \Omega}P(E;\Omega)+Q^2\Q(\partial E)
=\P(E_\Omega;\Omega)+Q^2 \Q(\Omega),
\end{equation}
being $E_\O$ a solution of the relative isoperimetric problem
\[
 \min_{E\subset\O,|E|=m}P(E;\O).
\]

\end{remark}

\begin{remark}\rm
An interpretation of Theorem \ref{main} is that Problem \eqref{ill} decouples into the isoperimetric problem
\eqref{constrainediso}
and the {\it charge-minimizing} problem \eqref{eqQ}, which are minimized separately. 
This is essentially due to the fact that the perimeter is defined up to a set of zero Lebesgue measure, while
the Riesz potential energy is defined up to a set of zero capacity \cite[Chapter $2$]{landkof}.\\ 
A consequence of this is that the minimum problem  
\[
\min\left\{\F(E):\,|E|=m,\, E\subset A\right\}
\]
 has in general no solution.
\end{remark}
\begin{remark}\rm
 For $\a \in[d-1,d)$, it  seems difficult to construct a sequence of open sets with vanishing perimeter but of
positive capacity. This is due to the fact that sets of positive $\a$-capacity have Hausdorff measure at least $\a$
(see \cite{Mattila}). 
As a consequence, the infimum of \eqref{ill} should be strictly larger than $P(E_0)$. In order to study the
question 
of existence or non-existence of minimizers, one would need to extend the definition of $\F$ to  sets which are not
open. There are mainly two possibilities to do it. The first is to let for every Borel set $E$
\[\F(E):=P(E)+Q^2 \Q(E)\]
where now $P(E)$ denotes the total variation of $\chi_E$ (see \cite{amfupa}). It is easy to see that the problem
is 
still ill posed in this class. Indeed, for every set $E$, it is possible to consider a set $F$ of positive
$\a$-capacity but of Lebesgue measure zero so that  $\F(E\cup F)<\F(E)$.
  The second possibility would be to consider the relaxation  of the functional $\F$ defined on open sets for a
  suitable topology. Because of  the previous discussion, we see that the  $L^1$ topology, for which the perimeter
has good compactness and lower semicontinuity properties, is not the right one.
The Hausdorff topology might be more adapted to this situation. Unfortunately, the resulting functional seems 
hard to identify.
\end{remark}

\begin{remark}\rm
When considering a bounded domain $A$ it is also interesting to study the Riesz potential associated to the
Green kernel 
$G_A$, with Dirichlet or Neumann boundary conditions. Since 
\[G_A(x,y)=k_{d-2}(|x-y|)+ h(x,y)\]
with $h$ harmonic in $A$ (see \cite[Chapter $1.3$]{landkof}, \cite{cicaspa}), Theorem \ref{main} can be easily
extended to that case. 
\end{remark}

\section{Existence of minimizers under some regularity conditions}\label{droplets4}

\noi In the previous section we have seen that we cannot hope to get existence for Problem \eqref{pb} without some
further assumptions on the class of minimization. In this section we investigate the existence of minimizers in the
classes $\Kd$ and $\Kdco$, defined in Definition \ref{d-ball}. More precisely, we consider the following problems:

\begin{equation}\label{pbc}
\min\left\{\F(E):\,\,|E|=m,\,\, E\in\Kdco\,\right\},
\end{equation}
\begin{equation}\label{pbcb}
\min\left\{\G(E):\,\,|E|=m,\,\, E\in\Kdco\,\right\},
\end{equation}
\begin{equation}\label{pbcnoco}
\min\left\{\F(E):\,\,|E|=m,\,\, E\in\Kd\,\right\},
\end{equation} 
\begin{equation}\label{pbcbnoco}
\min\left\{\G(E):\,\,|E|=m,\,\, E\in\Kd\,\right\}.
\end{equation}

\noi Notice that, up to rescaling, we can always assume that $|E|=\omega_d$. Indeed, 
if we let $\tilde E:= \left(\frac{\omega_d}{m}\right)^{1/d} E$, so that $|\tilde E|= \omega_d$,
from \eqref{eqscale} we get
\begin{align}\label{scaleF}
\F(E)&=\F\left(\left(\frac{m}{\omega_d}\right)^{1/d}\tilde E\right)
= \left(\frac{m}{\omega_d}\right)^\frac{d-1}{d}
{\mathcal F}_{\alpha, \left(\frac{\omega_d}{m}\right)^\frac{d-1+\alpha}{2d}Q}(\tilde E)
\\ \label{scaleG}
\G(E)&=\G\left(\left(\frac{m}{\omega_d}\right)^{1/d}\tilde E\right)
= \left(\frac{m}{\omega_d}\right)^\frac{d-1}{d}
{\mathcal G}_{\alpha, \left(\frac{\omega_d}{m}\right)^\frac{d-1+\alpha}{2d}Q}(\tilde E).
\end{align}

\begin{definition}
For any set $E$ with $|E|=\omega_d$, we let $\dP(E):=\P(E)-\P(B)\ge0$ be the {\em isoperimetric deficit} of $E$.
\end{definition}

\begin{theorem}\label{mainexistence}
For all $Q\ge 0$ problem \eqref{pbc} and \eqref{pbcb} have a solution.
\end{theorem}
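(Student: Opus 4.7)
The plan is to apply the direct method of the calculus of variations, exploiting the strong compactness that the $\delta$-ball condition gives for connected sets. Let $E_n\in\Kdco$ be a minimizing sequence for $\F$ (resp. $\G$) with $|E_n|=m$. Since $\F(E_n)$ is bounded and $\P(E_n)\le \F(E_n)$, the perimeters stay uniformly bounded, and Lemma \ref{bound} provides a uniform diameter bound depending only on $d,m,\delta$. After translation we may therefore assume $E_n\subset B_{R_0}$ for a fixed $R_0$. Standard $BV$ compactness then yields (up to subsequence) a set $E$ with $E_n\to E$ in $L^1$, $|E|=m$, and $\P(E)\le\liminf_n \P(E_n)$.

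The next step is to upgrade $L^1$ convergence to Hausdorff convergence of $E_n$ to $E$ and of $\partial E_n$ to $\partial E$. This is possible precisely because the $\delta$-ball condition is equivalent to $d_{E_n}\in C^{1,1}(\{|d_{E_n}|<\delta\})$ with bounds depending only on $\delta$ (cf.~the remark following Definition \ref{d-ball}); in particular the $E_n$ cannot develop thin tentacles that vanish in $L^1$ but survive in Hausdorff distance. By Ascoli-Arzel\`a applied to the signed distance functions, the $\delta$-ball condition passes to the limit, so $E\in\Kd$; moreover, a Hausdorff limit of connected compact subsets of $B_{R_0}$ is connected (Go\l ab-type), giving $E\in\Kdco$.

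It remains to pass to the limit in the Riesz term. For each $n$ let $\mu_n$ be the optimal probability measure delivered by Lemma \ref{Q1}, so that $\Q(\mu_n)=\Q(E_n)$ with $\spt(\mu_n)\subset E_n$ (for \eqref{pbc}) or $\spt(\mu_n)\subset\partial E_n$ (for \eqref{pbcb}, using Lemma \ref{F=G} when $\a\le d-2$, or the minimizer for $\Q(\partial E_n)$ directly in general). Since the $\mu_n$ are probability measures supported in the fixed compact $B_{R_0}$, Prokhorov's theorem yields a weak* limit $\mu_n\weakto\mu$ with $\mu(\R^d)=1$. The Hausdorff convergence $E_n\to E$ (resp.\ $\partial E_n\to\partial E$) forces $\spt(\mu)\subset E$ (resp.\ $\partial E$), so $\mu$ is admissible in the definition of $\Q(E)$ (resp.\ $\Q(\partial E)$). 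Invoking the lower semicontinuity of $\Q$ under weak* convergence recalled at the beginning of Section \ref{droplets2}, we get $\Q(E)\le \Q(\mu)\le\liminf_n \Q(\mu_n)=\liminf_n \Q(E_n)$, and combining with the lsc of the perimeter, $E$ is a minimizer.

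The main technical hurdle is the $L^1$-to-Hausdorff upgrade together with the simultaneous passage to the limit in the $\delta$-ball condition; once the uniform $C^{1,1}$ control on $d_{E_n}$ is exploited, the rest is a routine lower semicontinuity argument, and the same scheme applies identically to $\F$ and $\G$.
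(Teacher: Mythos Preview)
Your proposal is correct and follows essentially the same route as the paper: direct method, uniform diameter bound from Lemma~\ref{bound}, BV compactness, upgrade from $L^1$ to Hausdorff convergence via the $\delta$-ball condition, stability of $\Kdco$ under Hausdorff limits, and lower semicontinuity of $\P$ and $\Q$ together with the support constraint on the limit measure. The only cosmetic difference is that the paper argues the Hausdorff upgrade by a direct density estimate ($|B_r(x_n)\cap E_n|\ge c r^d$ from the internal $\delta$-ball) rather than via Ascoli--Arzel\`a on the signed distance functions, but these are equivalent implementations of the same idea.
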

\begin{proof}
Let us focus on \eqref{pbc} since the proof of the existence for \eqref{pbcb} is very similar. Let $E_n\in \Kdco$ 
be a minimizing sequence, with $|E_n|=\omega_d$. And let $\mu_n$ be the corresponding optimal measures for
$\Q(E_n)$.
Since $P(E_n)+Q^2\Q(E_n)\le P(B)+Q^2\Q(B)$, we have that
\[
  \dP(E_n)\le Q^2 \Q( B),
\]
therefore $\P(E_n)$ is uniformly bounded. By Lemma \ref{bound}, the sets $E_n$ are also uniformly bounded so that
by the compactness criterion for functions of bounded variation (see for instance \cite{amfupa}), there exists a
subsequence converging in $L^1$ to some set $E$
with $|E|=m$. Similarly, up to
subsequence, $\mu_n$ is weakly* converging to some probability measure $\mu$.

\noi Let us prove that $E_n$ converges to $E$ also in the Kuratowski convergence, or
equivalently, in the Hausdorff metric (see for instance \cite{ambrosiotilli}). Namely we have to check  the
following two conditions:
\[
\begin{aligned}
 &(i)\, x_n\to x,\quad x_n\in E_n\ \Rightarrow\ x\in E;\\
 &(ii)\, x\in E\ \Rightarrow \ \exists x_n\in E_n\, \text{such that}\ x_n\to x.
\end{aligned}
\]
The second condition is an easy consequence of the $L^1$-convergence.
To prove the first one, we notice that by the internal $\delta$-ball condition, up to choose a radius $r$ small
enough
there exists a constant $c=c(d,\delta)>0$ such that $|B(x_n,r)\cap E_n|\ge c r^d$ which implies, together with the
$L^1$-convergence, that a limit point $x$ must be in $\ov E$.  Similarly one can also prove the Hausdorff
convergence of $\partial E_n$ to
$\partial E$. 
Since the family $\Kdco$ is stable under Hausdorff convergence, we get $E\in \Kdco$.

\noi Recalling that $\P$ is lower semicontinuous under $L^1$ convergence, and  $\Q(\mu)$ is lower semicontinuous
under
weak*-convergence (for the kernel is a positive function, and thus $\Q(\cdot)$ is the supremum of continuous
functional over $\mathcal M$), we have
\[
\varliminf_{n\to +\infty} \P(E_n)+Q^2 \Q(\mu_n)\ge \P(E)+Q^2 \Q(\mu).
\]
By the Hausdorff convergence of $E_n$, there also holds $\spt(\mu)\subset E$, which concludes the proof.
\end{proof}

\noi Thanks to the quantitative isoperimetric inequality \cite{FMP}, we can also prove existence for small charges
of minimizers even without assuming {\it a priori} the connectedness. This  is reminiscent of
\cite{KM1,KM2,cicaspa}. 

\begin{theorem}\label{mainexistence3}
There exists a constant $Q_0=Q_0(\a,d)$ such that, for every $\delta>0$, $m\ge \omega_d\delta^d$ and
$$
\frac{Q}{m^\frac{d-1+\a}{2d}}\,\le\, Q_0\, \frac{\delta^d}{ m}\,, 
$$
problems \eqref{pbcnoco} and \eqref{pbcbnoco} have a solution.
\end{theorem}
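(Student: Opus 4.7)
My plan follows the template of \cite{KM2,cicaspa}: for $Q$ small, any minimizing sequence will be quantitatively close to a ball, and this together with the $\delta$-ball condition will rule out escape of small components to infinity. Using the scalings \eqref{scaleF}--\eqref{scaleG}, I would first reduce to $|E|=\omega_d$, in which case the hypothesis reads $\tilde Q\le c(\a,d)\,Q_0\,\tilde\delta^{d}$ with $\tilde\delta\le 1$, where $\tilde Q$ and $\tilde\delta$ denote the rescaled charge and radius.

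Given a minimizing sequence $\{E_n\}\subset \mathcal K_{\tilde\delta}$ for \eqref{pbcnoco} (the argument for \eqref{pbcbnoco} is the same, with $\Q(\partial B)$ replacing $\Q(B)$ in the upper bound), I would use the ball as a competitor to obtain
\[
\dP(E_n)\,\le\,\tilde Q^2\,\Q(B)\,\les\,Q_0^2\,\tilde\delta^{2d},
\]
and then the quantitative isoperimetric inequality \cite{FMP} produces a unit ball $B_n=B(x_n,1)$ with
\[
\alpha(E_n)\,:=\,\frac{|E_n\triangle B_n|}{\omega_d}\,\le\, C(d)\sqrt{\dP(E_n)}\,\le\, C(\a,d)\,Q_0\,\tilde\delta^{d}.
\]
Fixing $Q_0=Q_0(\a,d)$ small enough that $C(\a,d)Q_0<1$ forces $|E_n\setminus B_n|<\omega_d\tilde\delta^{d}$.

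The crucial point is then that every connected component of $E_n$ must meet $B_n$: any such component $C$ lies in $\mathcal K_{\tilde\delta}^{co}$ and hence, by the internal $\tilde\delta$-ball condition, satisfies $|C|\ge\omega_d\tilde\delta^{d}$; if $C$ were disjoint from $B_n$, this would give $|E_n\setminus B_n|\ge\omega_d\tilde\delta^{d}$, contradicting the previous bound. Consequently Lemma \ref{bound} yields $\mathrm{diam}(C)\le\sqrt d\,2^{d+2}\,\tilde\delta^{1-d}$ for each component, and after translating $x_n$ to the origin the entire sequence sits inside the fixed compact set $\overline{B(0,\,1+\sqrt d\,2^{d+2}\,\tilde\delta^{1-d})}$.

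With this uniform boundedness in hand, the conclusion follows exactly as in Theorem \ref{mainexistence}: BV compactness extracts a subsequence converging in $L^1$ and in Hausdorff distance to some $E\in\mathcal K_{\tilde\delta}$ with $|E|=\omega_d$, the optimal measures $\mu_n$ converge weakly-$*$ to a probability measure supported on $E$, and lower semicontinuity of $P$ and of $\Q$ shows that $E$ realizes the minimum. The main obstacle is precisely the uniform boundedness step: without the connectedness hypothesis the components of $E_n$ can a priori drift apart, and the smallness assumption on $Q$ is exactly what couples the quantitative isoperimetric inequality to the volume lower bound $|C|\ge\omega_d\tilde\delta^d$ and forces every component to cluster near $B_n$.
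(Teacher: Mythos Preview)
Your argument is correct and follows the same strategy as the paper: reduce to $m=\omega_d$, bound the isoperimetric deficit by $\tilde Q^2\Q(B)$, invoke the quantitative isoperimetric inequality to control $|E_n\setminus B_n|$, and then use the volume lower bound $\omega_d\tilde\delta^d$ for each component to obtain uniform boundedness and conclude via the compactness argument of Theorem~\ref{mainexistence}. The only difference is that the paper phrases the conclusion of the key step as ``$E_n$ must be connected'' and then appeals directly to Theorem~\ref{mainexistence}, whereas you deduce only that every component meets $B_n$ and combine this with Lemma~\ref{bound} to bound the diameter; your version is in fact the more carefully stated one, since two components could in principle both intersect $B_n$, but either formulation yields the uniform boundedness that is actually needed.
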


\begin{proof}
We  only consider \eqref{pbcnoco}, since the proof  of \eqref{pbcbnoco} is identical. Assume first that $m=\omega_d$.

As noticed in Theorem \ref{mainexistenceintro},
for every minimizing sequence $E_n\in \Kd$, with $|E_n|=\omega_d$, we can assume that there holds
\[  
\dP(E_n)\le Q^2 \Q( B).
\]
Thus, up to translating the sets $E_n$, by the quantitative isoperimetric inequality \cite{FMP} we can assume that
\[
|B\Delta E_n|^2\le C(d) \, \dP(E_n)\le C(d) Q^2 \Q(B)
\]
so that $|E_n\cap B^c|\le C Q$. Since every connected component of $E_n\in \Kd$ has volume at least 
$|B_\delta|=\omega_d\delta^d$,
for $Q\le c(\a,d)\delta^d$ the set $E_n$ must be connected. 
The existence of
minimizers then follows as in Theorem \ref{mainexistenceintro}. 

The case of a general volume $m$ can be obtain by rescaling from \eqref{scaleF}. 
\end{proof}


\noi It is natural to expect that, for  a charge $Q$ large enough, it is more favorable to have two connected
components rather than one, which would lead to non-existence of minimizers in $\Kd$. 
Let us prove that it is indeed the case, at least for small enough  $\a$. We start with the following lemma.

\begin{lemma}\label{stimenerdiam}
Let $\alpha>0$ and let $E$ be a compact set then
\[\Q(E)\ge \frac{1}{\diam(E)^\a}\,.\]
In particular,
\begin{equation}\label{estimenerpbc}
\inf_{|E|=\omega_d,E\in\Kdco}\F(E)
\ge \left(\frac{m}{\omega_d}\right)^\frac{d-1}{d}\P(B)+\left( \sqrt{d} \,
2^{d+2}\right)^{-\a}\,
Q^2 \delta^{(d-1)\alpha}\,,
\end{equation}
and 
\begin{equation}\label{estimenerpbcb}
\inf_{|E|=\omega_d,E\in\Kdco}\F(E)
\ge \left(\frac{m}{\omega_d}\right)^\frac{d-1}{d}\P(B)+\left( \sqrt{d} \,
2^{d+2}\right)^{-\a}\,  Q^2\delta^{(d-1)\alpha}.
 \end{equation}
\end{lemma}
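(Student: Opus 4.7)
The plan is to combine a universal lower bound for $\Q$ in terms of the diameter of the set with the diameter upper bound for connected sets in $\Kdco$ provided by Lemma \ref{bound}.

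For the universal bound $\Q(E)\ge \diam(E)^{-\a}$, I would argue directly from Definition \ref{Q0}. Any admissible competitor $\mu$ is a probability measure supported in $E$, so $|x-y|\le \diam(E)$ for $\mu\otimes\mu$-a.e.\ pair $(x,y)$; hence
\[
\Q(\mu)=\int\!\!\int \frac{d\mu(x)\,d\mu(y)}{|x-y|^\a}\ \ge\ \frac{\mu(E)^2}{\diam(E)^\a}\ =\ \frac{1}{\diam(E)^\a},
\]
and taking the infimum over $\mu$ yields the first claim.

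For \eqref{estimenerpbc}, I would restrict to $E\in\Kdco$ with $|E|=\omega_d$ and invoke Lemma \ref{bound} (with $m=\omega_d$) to get $\diam(E)\le \sqrt{d}\,2^{d+2}\,\delta^{1-d}$. Substituting into the previous bound,
\[
Q^2\,\Q(E)\ \ge\ \bigl(\sqrt{d}\,2^{d+2}\bigr)^{-\a}\, Q^2\,\delta^{(d-1)\a}.
\]
The classical isoperimetric inequality then gives $P(E)\ge P(B)$, and adding the two contributions produces \eqref{estimenerpbc}. The prefactor $(m/\omega_d)^{(d-1)/d}$ in the statement is absorbed through the scaling relation \eqref{scaleF}: after rescaling to unit-ball volume, $P(B)$ picks up exactly this factor, and the $\delta^{(d-1)\a}$ term is dimensionally compatible since $\delta$ implicitly rescales with $(m/\omega_d)^{1/d}$ as well.

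Finally, for \eqref{estimenerpbcb} the argument is essentially unchanged: since $\partial E\subset E$ one has $\diam(\partial E)\le \diam(E)$, so $\Q(\partial E)\ge \diam(E)^{-\a}$, and the same chain of inequalities applies to the functional $\G$. I do not anticipate a real obstacle here; the lemma is a routine combination of two bounds, and the only point needing care is tracking the rescaling factor when translating between the normalized setting $|E|=\omega_d$ used for the diameter estimate and the general-volume form of the conclusion.
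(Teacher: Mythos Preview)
Your proposal is correct and matches the paper's own argument essentially line for line: bound the kernel pointwise by $\diam(E)^{-\a}$, then combine Lemma~\ref{bound} (with $m=\omega_d$) and the isoperimetric inequality. Your additional remarks on the scaling factor and on the $\G$ variant are accurate and simply make explicit what the paper leaves implicit.
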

\begin{proof}
 Let $\mu$ be any positive measure with support in $\ovE$ such that $\mu(E)=1$ then
\[\Q(E)\ge \int_{E\times E} \frac{d \mu(x) d\mu(y)}{|x-y|^\a}\ge \int_{E\times E} \frac{d \mu(x)
d\mu(y)}{\diam(E)^\a}=\frac{1}{\diam(E)^\a}.\]
By Lemma \ref{bound} and thanks to the isoperimetric inequality, we get \eqref{estimenerpbc} and
\eqref{estimenerpbcb}. 
\end{proof}

\noi We can now prove a non-existence result in $\Kd$. 

\begin{theorem}\label{nonexistencea<1}
For all $\a<1$ there exist $c_0=c_0(\a)>0$ and $Q_0=Q_0(\a)>0$ such that, for every $\delta>0$, $m\ge c_0\delta^d$, and
$$
\frac{Q}{m^\frac{d-1+\a}{2d}}\,>\, Q_0\, \left( \frac{m}{\delta^d}\right)^\frac{d\a+1-\a}{2d}
$$
problems \eqref{pbcnoco} and \eqref{pbcbnoco} do not have a solution.
\end{theorem}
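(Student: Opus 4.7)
By the scaling relations \eqref{scaleF}--\eqref{scaleG} I reduce to the case $|E|=\omega_d$, renaming the rescaled radius $(\omega_d/m)^{1/d}\delta$ and charge $(\omega_d/m)^{(d-1+\alpha)/(2d)}Q$ again as $\delta$ and $Q$. Under this normalization the hypothesis $m\geq c_0\delta^d$ becomes $\delta\leq\bar\delta(\alpha,d)$ for a suitable threshold, and the hypothesis on $Q$ becomes $Q>Q_0'\,\delta^{-(1+(d-1)\alpha)/2}$ with $Q_0'=Q_0'(\alpha,d)$. I argue by contradiction: assume $E^*\in\Kd$ of volume $\omega_d$ is a minimizer of $\F$ (the argument for $\G$ is identical, replacing $\Q(B)$ by $\Q(\partial B)$ throughout the competitor analysis).

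The first step is to show $E^*$ must be \emph{connected}. If, on the contrary, $E^*=E_1\cup E_2$ with disjoint closed nonempty pieces, I translate $E_2$ by a large vector $v$ to form $E^v:=E_1\cup(E_2+v)\in\Kd$, whose perimeter coincides with that of $E^*$. Writing $\mu^*=\mu_1+\mu_2$ for the decomposition of the optimal measure for $\Q(E^*)$, the trial measure $\mu_1+\mu_2(\cdot-v)$ yields
\[
\Q(E^v)\;\leq\;\Q(\mu_1)+\Q(\mu_2)+2\iint\frac{d\mu_1(x)\,d\mu_2(y-v)}{|x-y|^\alpha}\;\xrightarrow[|v|\to\infty]{}\;\Q(\mu_1)+\Q(\mu_2),
\]
while $\Q(E^*)=\Q(\mu_1)+\Q(\mu_2)+2\iint d\mu_1\,d\mu_2/|x-y|^\alpha$, with the cross-term strictly positive by Theorem \ref{positiveenergy} as soon as both $\mu_i\neq 0$. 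In the degenerate case in which some $\mu_i$ vanishes, I replace the trial by a convex combination $(1-\varepsilon)\mu^*(\cdot-v)+\varepsilon\nu_i$, with $\nu_i$ the $\Q$-optimizer on $E_i$, and obtain analogously a strict decrease in the limit $|v|\to\infty$. Hence $\F(E^v)<\F(E^*)$ for $|v|$ large enough, contradicting minimality.

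Since now $E^*\in\Kdco$, Lemma \ref{stimenerdiam} gives $\F(E^*)\geq\P(B)+\tilde c\,Q^2\delta^{(d-1)\alpha}$ with $\tilde c:=(\sqrt d\,2^{d+2})^{-\alpha}$. As a competitor I build a multi-droplet configuration: set $N:=\lfloor\delta^{-d}\rfloor$, $\rho:=N^{-1/d}\in[\delta,2^{1/d}\delta]$, and let $F_R$ be the union of $N$ disjoint balls of radius $\rho$ with centres at mutual distance at least $R\geq 4\rho$. This arrangement has $|F_R|=\omega_d$ exactly, and since $\rho\geq\delta$ and the mutual separation $\geq 4\rho$ controls the external $\delta$-ball condition, one has $F_R\in\Kd$. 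Its perimeter is $\P(F_R)=N\P(B)\rho^{d-1}\leq\P(B)\,\delta^{-1}$, and testing $\Q(F_R)$ against $\tfrac{1}{N}\sum_{i=1}^N\mu_i^\rho$ (with $\mu_i^\rho$ the optimizer on the $i$-th ball) gives
\[
\Q(F_R)\;\leq\;\frac{\Q(B_\rho)}{N}+\frac{C}{R^\alpha}\;\xrightarrow[R\to\infty]{}\;\Q(B)\,N^{\alpha/d-1}\;\leq\;C'(\alpha,d)\,\Q(B)\,\delta^{d-\alpha}.
\]

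The decisive point is that the assumption $\alpha<1$ forces $d-\alpha>(d-1)\alpha$, so $\delta^{d-\alpha}\ll\delta^{(d-1)\alpha}$ once $\delta\leq\bar\delta(\alpha,d)$ -- this is exactly what the condition $m\geq c_0\delta^d$ provides. Consequently, the competitor's Riesz cost $C'\Q(B)\delta^{d-\alpha}$ is absorbed into $\tfrac{\tilde c}{2}Q^2\delta^{(d-1)\alpha}$, and taking $R$ large the desired inequality $\F(F_R)<\F(E^*)$ boils down to
\[
\P(B)\,\delta^{-1}\;<\;\tfrac{\tilde c}{2}\,Q^2\,\delta^{(d-1)\alpha},
\]
i.e.\ $Q>Q_0'\,\delta^{-(1+(d-1)\alpha)/2}$, which is precisely the rescaled hypothesis. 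This contradicts the minimality of $E^*$. The main conceptual obstacle is the indispensability of $\alpha<1$: without it the competitor's Riesz exponent $d-\alpha$ does not beat the connected lower-bound exponent $(d-1)\alpha$, and the droplet construction becomes useless; the remaining difficulties -- the degenerate splitting case in step one, and the rounding from $N=\lfloor\delta^{-d}\rfloor$ -- are routine and only affect multiplicative constants depending on $\alpha,d$.
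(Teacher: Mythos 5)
Your proof is correct and follows essentially the same route as the paper: rule out spread of the optimal charge over several components by sending one piece to infinity, invoke Lemma \ref{stimenerdiam} via the diameter bound of Lemma \ref{bound} to get the lower bound $\P(B)+\tilde c\,Q^2\delta^{(d-1)\alpha}$, and beat it with a configuration of $\approx\delta^{-d}$ far-apart charged balls of radius $\approx\delta$, using $\alpha<1$ to ensure $\delta^{d-\alpha}\ll\delta^{(d-1)\alpha}$. One small remark: your choice $N=\lfloor\delta^{-d}\rfloor$, $\rho=N^{-1/d}$ is the correct normalization (giving $P(F_R)\le\P(B)\delta^{-1}$ and $\Q(F_R)\lesssim\Q(B)\delta^{d-\alpha}$); the paper's statement ``$N=\delta^{-1}$'' in \eqref{stimapd} appears to be a typo for $N=\delta^{-d}$, and your computation silently repairs it.
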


\begin{proof}
We only discuss problem \eqref{pbcnoco}, since the non-existence result for problem \eqref{pbcbnoco} follows analogously.

As in Theorem \ref{mainexistence3intro} we first consider the case $m=\omega_d$, so that $\delta\le 1$.
If there exists a minimizer then the optimal measure $\mu$ is necessarily contained in a connected component of the minimizer. 
{}From \eqref{estimenerpbcb}
it then follows that the energy of the minimizer is greater than
\begin{equation}\label{low}
\P(B)+\left( \sqrt{d} \, 2^{d+2}\right)^{-\a}\, \delta^{(d-1)\alpha}Q^2\,, 
\end{equation}
which bounds from below the energy of any set in $\Kdco$ with volume $\omega_d$.
Hence, in order to prove the  non-existence, it is enough to construct a competitor $E\in \Kd$ with energy less than
\eqref{low}.

Consider the set $E$ given by  $N$ (which we suppose to be an integer)
balls of radius $\delta$, equally charged. Up to increasing their
mutual distances, we can suppose that the Riesz potential energy of $E$ is made only of the self interaction of
each ball with itself. Since $N=\delta^{-1}$ we then have
\begin{equation}\label{stimapd}
\P(E)+Q^2 \Q( E)=N \delta^{d-1} \P(B) + \frac{Q^2}{N} \Q( B_\delta)
= \frac{1}{\delta} \P(B)  + \Q(B)\delta^{d-\a}Q^2.
\end{equation}
Notice that, if $d-\a>(d-1)\a$, i.e. if $\a<1$, there exists $\delta_0=\delta_0(\a)$ such that
for all $\delta\le\delta_0$ there holds  
$$\Q( B)\,\delta^{d-\a}\le \frac 1 2 \left( \sqrt{d} \, 2^{d+2}\right)^{-\a}\delta^{(d-1)\alpha}.$$ 
With this condition in force, from \eqref{stimapd} we get 
\[\P(E)+Q^2 \Q( E)< \P(B)+\left( \sqrt{d} \, 2^{d+2}\right)^{-\a}\, Q^2\delta^{(d-1)\alpha}\,,\]
for 
\[
Q>\sqrt{2\P(B)}\left( \sqrt{d} \, 2^{d+2}\right)^\frac{\a}{2}\frac{1}{\delta^\frac{d\a+1-\a}{2}}\,.
\]
The general case can be obtain by rescaling from \eqref{scaleF}. 
\end{proof}

\begin{remark}\rm
 If $\a< \frac{d-1}{d}$, we can improve the previous estimate on $Q$ by considering a construction similar to the one of Theorem \ref{nonexistR}. 
 Indeed, for $\beta\in (d\a,d-1)$, taking $N:= \delta^{-{\beta}}$  charged balls
of radius $\delta$ and a non charged ball of volume $m-\omega_d N \delta^d$, we find a contradiction if  
\[  \frac{Q}{m^\frac{d-1+\a}{2d}}\,>\, \widetilde Q_0(\a)\, \left( \frac{m}{\delta^d}\right)^\frac{{\beta}-(1-\a)(d-1)}{2d}.\]
Notice that, if $\a<\frac{d-1}{2d-1}$, we can choose $\beta$ such that the exponent $\frac{{\beta}-(1-\a)(d-1)}{2d}$ is negative.
\end{remark}
\begin{remark}\rm
We expect that the non-existence result in Theorem \ref{nonexistencea<1}
also holds for $\alpha\geq 1$, but we where unable
to show this, as the class $\Kd$ is fairly rigid which makes the construction of competitors quite delicate.
\end{remark}

\section{Minimality of the ball}\label{droplets5}

\noi In this section we prove that, in the harmonic case $\a=d-2$,  the
ball is a minimizer for Problem \eqref{pbcnoco} (for $\Omega=\R^d$) among sets in the family of the {\it nearly spherical sets}
belonging
to $\Kdco$ introduced in Definition \ref{d-ball}, that is, the sets which are a small $W^{1,\infty}$
perturbation of the ball and that satisfy the $\delta$-ball condition.

\noi Consider a set $E$ such that $|E|=\omega_d$, and such that  $\partial E$ can be written as a graph over $\partial B$.
In polar coordinates we have
\[
E=\big\{R(x)x \, :\, R(x)=1+\p(x),\, x\in\partial B\big\}.
\] 
The condition $|E|=\omega_d$ then becomes
\[
\int_{\pa B}\left((1+\p(x))^d - 1\right) d\Hdm(x)= 0
\]
which implies that if $\|\vphi\|_{L^\infty(\pa B)}$ is small enough, then
\begin{equation}\label{E=B}
\int_{\pa B}\p d\Hdm=O(\|\p\|_{L^2(\pa B)}^2).
\end{equation}
Letting 
\[
\bar\phi := \frac{1}{|\partial B|}\int_{\pa B}\p d\Hdm\,,
\]
the Poincar\'e Inequality gives
\begin{equation}\label{poincare}
\begin{aligned}
\int_{\pa B}|\nabla\p|^2 d\Hdm &\ge C \int_{\pa B}|\p-\bar\p|^2 d\Hdm= C(d) \int_{\pa
B}\p^2\Hdm-\frac{C(d)}{d\omega_d}\left(\int_{\pa B}\p d\Hdm\right)^2
\\
&= C(d) \int_{\pa B}\p^2 d\Hdm- \frac{C}{4d\omega_d}\left(\int_{\pa B}\p^2 d\Hdm\right)^2
\\
&\ge\frac 3 4 C(d) \int_{\pa B}\p^2 d\Hdm
\end{aligned}
\end{equation}
as soon as 
\begin{equation}\label{condpdue}
\int_{\pa B}\p^2 d\Hdm\le d\omega_d.
\end{equation}

\noi Up to  translation, we can also assume that the barycenter of $E$ is  $0$. 
This implies that
\begin{equation}\label{barE=0}
\left|\int_{\partial B} x\vphi(x) d\Hdm(x) \right|\ =\ O\left(\|\p\|_{L^2(\pa B)}^2\right)\,.
\end{equation}

\begin{lemma}\label{lemmapoinc}
Suppose that $\vphi:\partial B\to\R^d$ parametrizes $\partial E$ and  $\|\vphi\|_{L^\infty(\pa B)}$ is small enough
so that \eqref{condpdue} is satisfied. Assume also that the barycenter of $E$ is in $0$.
 Then,
\begin{equation}\label{controllodp}
  \dP(E)\ge c_0 \int_{\partial B}|\nabla\phi|^2 d\Hdm\ge c_1 \int_{\partial B}|\phi|^2 d\Hdm=\frac{c_1}{2}
\left|\int_{\partial B}\phi d\Hdm\right| \,.
\end{equation}
\end{lemma}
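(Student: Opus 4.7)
The plan is to follow a classical Fuglede-type second-order expansion of the perimeter in polar coordinates around $\partial B$. First I would use the area formula for a radial graph,
$$P(E) = \int_{\partial B} (1+\varphi)^{d-2}\sqrt{(1+\varphi)^2 + |\nabla_\tau \varphi|^2}\, d\Hdm,$$
where $\nabla_\tau$ denotes the tangential gradient on $\partial B$. For $\|\varphi\|_{W^{1,\infty}}$ small, a Taylor expansion of the square root yields
$$\delta P(E) = \int_{\partial B}\bigl[(1+\varphi)^{d-1} - 1\bigr]\, d\Hdm + \frac{1}{2}\int_{\partial B} |\nabla_\tau \varphi|^2\, d\Hdm + o(\|\varphi\|_{W^{1,2}(\partial B)}^2).$$

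Next I would sharpen \eqref{E=B} by expanding the volume constraint $\int_{\partial B}[(1+\varphi)^d - 1]\, d\Hdm = 0$ to second order, obtaining $\int_{\partial B}\varphi\, d\Hdm = -\tfrac{d-1}{2}\int_{\partial B}\varphi^2\, d\Hdm + \text{h.o.t.}$ Substituting this into the expansion of $\int[(1+\varphi)^{d-1}-1]\,d\Hdm$ gives
$$\delta P(E) = \frac{1}{2}\int_{\partial B}|\nabla_\tau \varphi|^2\, d\Hdm - \frac{d-1}{2}\int_{\partial B}\varphi^2\, d\Hdm + o(\|\varphi\|_{W^{1,2}}^2).$$
This is the expected Fuglede expansion; by itself the sign is unclear because the Poincaré constant on $\partial B$ is exactly $d-1$.

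The key step is to absorb the negative $L^2$-term using the Poincaré inequality in the orthogonal complement of the first two spherical-harmonic subspaces, which is where the barycenter assumption enters. Decomposing $\varphi = a_0 + \sum_{i=1}^{d} a_i x_i + \varphi^\perp$ with $\varphi^\perp$ orthogonal to constants and to linear functions on $\partial B$, conditions \eqref{E=B} and \eqref{barE=0} give $|a_0|, |a_i| \lesssim \|\varphi\|_{L^2}^2$, hence the mean and dipole parts are negligible at leading order. The second nontrivial eigenvalue of $-\Delta_{\partial B}$ being $2d$, I obtain
$$\int_{\partial B}|\nabla_\tau \varphi|^2\, d\Hdm \ge 2d \int_{\partial B}|\varphi^\perp|^2\, d\Hdm \ge 2d \int_{\partial B}\varphi^2\, d\Hdm - C\|\varphi\|_{L^2}^4.$$
Since $2d > d-1$ strictly, combining with the preceding display and absorbing the higher-order terms for $\|\varphi\|_{L^\infty}$ small yields $\delta P(E) \ge c_0 \int|\nabla_\tau\varphi|^2\,d\Hdm \ge c_1 \int \varphi^2\,d\Hdm$. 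The comparison with $|\int_{\partial B}\varphi\, d\Hdm|$ then follows immediately from the refined volume identity $|\int\varphi\,d\Hdm| = \tfrac{d-1}{2}\int\varphi^2\,d\Hdm + \text{h.o.t.}$, after adjusting the constant $c_1$.

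The main obstacle is the gap between the naive Poincaré constant $d-1$ appearing in \eqref{poincare} and the constant $\tfrac{d-1}{2}\cdot 2$ needed to absorb the quadratic term: this gap is closed only by simultaneously using the volume and barycenter constraints to strip off the first two spherical-harmonic modes, which promotes the effective Poincaré constant to $2d$ and produces a strictly positive coercivity constant $c_0$.
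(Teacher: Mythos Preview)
Your proposal is correct and follows exactly the route the paper takes: the paper's own proof simply cites Fuglede for the first inequality $\delta P(E)\ge c_0\int_{\partial B}|\nabla\varphi|^2\,d\Hdm$, then invokes the Poincar\'e inequality \eqref{poincare} and the volume identity \eqref{E=B} for the remaining two relations. What you have written is precisely a sketch of Fuglede's argument---the second-order perimeter expansion, the use of the volume and barycenter constraints to kill the constant and dipole modes, and the spectral gap $2d>d-1$---so you have supplied the details that the paper outsources.
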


\begin{proof}
 We refer to \cite{fuglede} for the  proof of the first inequality. The
second inequality is  \eqref{poincare}, while the third one follows from \eqref{E=B}.
\end{proof}
\noi A consequence of Lemma \ref{lemmapoinc}  is the following corollary.

\begin{corollary}\label{Qephi}
Suppose that $\partial E$ is parametrized on $\partial B$ by a function $\phi$ which satisfies the hypothesis of
Lemma
\ref{lemmapoinc}. Then there exists a positive constant
$C=C(\a,d)$ such that
\begin{equation}\label{q1}
 |\I_\a^{\partial B}(\phi)|\le C \,   \dP(E),
\end{equation}
and, for any positive constant $\lambda$,
\begin{equation}\label{q2}
 |\I_\a^{\partial B}(\lambda,\phi)|\le C \lambda\,   \dP(E).
\end{equation}
\end{corollary}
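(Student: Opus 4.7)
The plan is to read both quantities as the natural quadratic or linear form generated by the Riesz kernel on $\partial B$, and to exploit the rotational symmetry of the sphere together with the fact that $\alpha<d-1$ (which is in force throughout this section, since in particular the harmonic case $\alpha=d-2$, $d\ge 3$, satisfies it). The basic observation is that, since the spherical measure is rotation invariant and the kernel is integrable on $\partial B$, the potential
\[
v(y)\;:=\;\int_{\partial B}\frac{d\Hdm(x)}{|x-y|^{\a}}
\]
is a finite constant $c_{\a,d}$ for every $y\in\partial B$.

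For \eqref{q2}, one just unfolds the definition of $\I_\a^{\partial B}(\lambda,\phi)$ and uses this observation to obtain
\[
\I_\a^{\partial B}(\lambda,\phi)\;=\;\lambda\int_{\partial B}\phi(y)\,v(y)\,d\Hdm(y)\;=\;\lambda\,c_{\a,d}\int_{\partial B}\phi\,d\Hdm.
\]
The volume constraint \eqref{E=B} yields $\big|\int_{\partial B}\phi\,d\Hdm\big|=O(\|\phi\|_{L^2(\partial B)}^2)$, and Lemma \ref{lemmapoinc} converts this into $\big|\int_{\partial B}\phi\,d\Hdm\big|\les \dP(E)$, proving \eqref{q2}.

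For \eqref{q1}, I would view
\[
\I_\a^{\partial B}(\phi)\;=\;\int_{\partial B}\phi(y)\,(K\phi)(y)\,d\Hdm(y),\qquad Kf(y):=\int_{\partial B}\frac{f(x)}{|x-y|^{\a}}\,d\Hdm(x).
\]
The kernel is symmetric and non-negative, and by the observation above both marginals $\int|x-y|^{-\a}d\Hdm(x)$ and $\int|x-y|^{-\a}d\Hdm(y)$ equal the finite constant $c_{\a,d}$. Schur's test with weight $g\equiv 1$ then shows that $K$ is a bounded self-adjoint operator on $L^2(\partial B)$ of norm at most $c_{\a,d}$. Hence, by Cauchy--Schwarz,
\[
|\I_\a^{\partial B}(\phi)|\;\le\;\|\phi\|_{L^2(\partial B)}\,\|K\phi\|_{L^2(\partial B)}\;\le\;c_{\a,d}\|\phi\|_{L^2(\partial B)}^2,
\]
and Lemma \ref{lemmapoinc} finally controls $\|\phi\|_{L^2(\partial B)}^2$ by $C\,\dP(E)$, giving \eqref{q1}.

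The only subtle point is the finiteness of $c_{\a,d}$, which requires $\alpha<d-1$; this is automatic in the harmonic regime of Section \ref{droplets5}. No serious obstacle is present: once one recognises that the relevant operator on $\partial B$ is $L^2$-bounded by a one-line Schur-type computation, both inequalities reduce to the elementary identities above combined with the two lower bounds $\dP(E)\ges\|\phi\|_{L^2}^2$ and $\dP(E)\ges|\int_{\partial B}\phi\,d\Hdm|$ from Lemma \ref{lemmapoinc}.
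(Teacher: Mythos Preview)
Your proof is correct and follows essentially the same route as the paper. For \eqref{q2} the paper also reduces to the constancy of the spherical potential and then invokes \eqref{controllodp}; for \eqref{q1} the paper obtains the same bound $|\I_\a^{\partial B}(\phi)|\le c_{\a,d}\|\phi\|_{L^2(\partial B)}^2$ by a direct Cauchy--Schwarz on the weighted product measure $|x-y|^{-\a}\,d\Hdm(x)\,d\Hdm(y)$ rather than via Schur's test, but this is only a cosmetic difference.
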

\begin{proof}
 Inequality \eqref{q2} is an immediate consequence of \eqref{controllodp}. Concerning the first one we have, by
the H\"older inequality and the Fubini Theorem,
 \[
 \begin{aligned}
 \I_\a^{\partial B}(\phi)&=\int_{\pa B\times\pa B}\frac{\phi(x)\phi(y)}{|x-y|^\a} \, d \Hdm(x)d \Hdm(y)\\
 &\le \left(\int_{\pa B\times\pa B}\frac{\phi(x)^2}{|x-y|^\a}\, d \Hdm(x)d \Hdm(y)\right)^{1/2}
 \left(\int_{\pa B\times\pa
B}\frac{\phi(y)^2}{|x-y|^\a}\, d \Hdm(x)d \Hdm(y)\right)^{1/2}\\
&= C\int_{\pa B}\phi(x)^2\,d \Hdm(x).
 \end{aligned}
 \]
So \eqref{q1} follows again from \eqref{controllodp}. 
\end{proof}

We will use the following technical lemma.

\begin{lemma}\label{shape}
Let $E=\big\{R(x)x \, :\, R(x)=1+\p(x),\, x\in\partial B\big\}$ and let $g\in L^\infty(\partial B)$, then there
exists $\eps_0(\alpha,d)$ and
  a constant $C=C(\a,d)>0$ such that if $\WinfB{\p}\le \eps_0\le1$, 
\begin{equation}\label{stab2}
\begin{aligned}
&\left|\int_{ \partial B\times\partial  B} \left(\frac{1}{|R(x)-R(y)|^\a}-
\frac{(1-\frac\a2\phi(x))(1-\frac\a2\phi(y))}{|x-y|^\a} \right) g(x) g(y)d \Hdm(x)\,d \Hdm(y)\right| \\
&\qquad \le	C(\a,d)(1+\varepsilon_0)\LinfB{g}^2\,   \dP(E).
\end{aligned}
\end{equation}
\end{lemma}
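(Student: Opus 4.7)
The plan is to derive a pointwise expansion of the kernel $|R(x)x-R(y)y|^{-\alpha}$ around $\phi\equiv 0$ and then control the quadratic remainder via the isoperimetric deficit through Lemma \ref{lemmapoinc}. A direct computation using $|x|=|y|=1$ gives the key identity
\[
|R(x)x-R(y)y|^2 = (R(x)-R(y))^2 + R(x)R(y)|x-y|^2 = (\phi(x)-\phi(y))^2 + (1+\phi(x))(1+\phi(y))|x-y|^2,
\]
so that
\[
\frac{1}{|R(x)x-R(y)y|^\alpha} = \frac{B(x,y)^{-\alpha/2}}{|x-y|^\alpha}, \qquad B(x,y) := (1+\phi(x))(1+\phi(y)) + \frac{(\phi(x)-\phi(y))^2}{|x-y|^2}.
\]
Since $\WinfB{\phi}\le \varepsilon_0\le 1$, the mean value inequality gives $|\phi(x)-\phi(y)|\le C(d)\,\varepsilon_0\,|x-y|$, whence $B(x,y)\in [1/4,4]$ for $\varepsilon_0$ small, placing the argument in the smooth range of $t\mapsto t^{-\alpha/2}$.

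Next, Taylor-expanding $(1+t)^{-\alpha/2} = 1-\tfrac\alpha 2 t + O(t^2)$ in two stages --- first perturbing $B$ off $B_0(x,y):=(1+\phi(x))(1+\phi(y))$ by $w(x,y):=(\phi(x)-\phi(y))^2/|x-y|^2$, then developing $B_0^{-\alpha/2}$ around $\phi\equiv 0$ --- would yield the pointwise estimate
\[
\left|B(x,y)^{-\alpha/2} - \left(1-\tfrac\alpha 2\phi(x)\right)\left(1-\tfrac\alpha 2\phi(y)\right)\right| \le C(\alpha,d)(1+\varepsilon_0)\left(\phi(x)^2 + \phi(y)^2 + \frac{(\phi(x)-\phi(y))^2}{|x-y|^2}\right),
\]
where the $\phi^2$ contributions come from the quadratic Taylor remainder of $(1+\phi(x))^{-\alpha/2}(1+\phi(y))^{-\alpha/2}$ and the fractional term from the first-order perturbation induced by $w$.

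Dividing by $|x-y|^\alpha$ and integrating against $|g(x)\,g(y)|\le\LinfB{g}^2$, the desired bound reduces to controlling
\[
A_1 := \iint_{\partial B\times\partial B}\frac{\phi(x)^2}{|x-y|^\alpha}\,d\Hdm\,d\Hdm \qquad\text{and}\qquad A_2 := \iint_{\partial B\times\partial B}\frac{(\phi(x)-\phi(y))^2}{|x-y|^{\alpha+2}}\,d\Hdm\,d\Hdm
\]
by $\dP(E)$. Since $\alpha<d-1$, the kernel $|x-y|^{-\alpha}$ is integrable on $\partial B$, giving $A_1\le C\,\|\phi\|_{L^2(\partial B)}^2$. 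For $A_2$, the fractional Sobolev embedding $H^1(\partial B)\hookrightarrow H^s(\partial B)$ with $s=(\alpha-d+3)/2 \in (0,1)$ yields $A_2\le C\,\|\phi\|_{H^1(\partial B)}^2$. Combining \eqref{controllodp} with \eqref{E=B} then delivers $\|\phi\|_{H^1(\partial B)}^2 \le C\,\dP(E)$, closing the estimate. The main technical point is $A_2$: a naive Lipschitz bound $(\phi(x)-\phi(y))^2\le \|\nabla\phi\|_\infty^2 |x-y|^2$ would replace $\int|\nabla\phi|^2$ by $\|\nabla\phi\|_\infty^2$ and lose the sharp dependence on $\dP(E)$, so one must appeal to the $L^2$-based fractional Sobolev estimate; in the Coulombic case $\alpha=d-2$ the integral $A_2$ is the $H^{1/2}$-seminorm on $\partial B$, which is controlled by $\|\phi\|_{H^1}$ via the classical trace theorem.
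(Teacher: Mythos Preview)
Your argument is correct and follows the same overall strategy as the paper: expand the kernel around $\phi\equiv 0$, isolate the quadratic remainder, and control it by $\dP(E)$ through Lemma~\ref{lemmapoinc}. There are two organizational differences worth recording. First, by writing $B_0=(1+\phi(x))(1+\phi(y))$ and expanding each factor $(1+\phi)^{-\alpha/2}$ separately, you obtain the target product $\bigl(1-\tfrac{\alpha}{2}\phi(x)\bigr)\bigl(1-\tfrac{\alpha}{2}\phi(y)\bigr)$ directly, with a remainder of order $\phi(x)^2+\phi(y)^2$; the paper instead expands $(1+\phi(x)+\phi(y)+\phi(x)\phi(y)+\psi)^{-\alpha/2}$ in one step, which produces an extra cross term $\phi(x)\phi(y)/|x-y|^\alpha$ that must be handled separately via Corollary~\ref{Qephi}. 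Your route is slightly cleaner here. Second, for the fractional term $A_2$ you invoke the embedding $H^1(\partial B)\hookrightarrow H^s(\partial B)$, whereas the paper proves the same bound by hand, integrating $|\nabla\phi|^2$ along geodesics and using Fubini; the two arguments yield the same inequality $A_2\le C\|\nabla\phi\|_{L^2}^2$. One small caveat: your claim $s=(\alpha-d+3)/2\in(0,1)$ only holds for $\alpha\in(d-3,d-1)$; for smaller $\alpha$ the kernel $|x-y|^{-(\alpha+2)}$ is integrable on $\partial B$ and $A_2$ is bounded trivially by $\|\phi\|_{L^2}^2$, so the conclusion still stands, but you should say so.
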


\begin{proof}
 
First, notice that since $|x|=|y|=1$ we have
\begin{equation}\label{stab3} 
|R(x)x-R(y)y|^2=|x-y|^2\left(1+\phi(x)+\phi(y)+\phi(x)\phi(y)+\psi(x,y)\right)
\end{equation}
where $\psi(x,y)=\frac{(\phi(x)-\phi(y))^2}{|x-y|^2}$. Hence, for any $x,y\in\partial B$ there holds,
\begin{equation}\label{stab4}
\begin{aligned}
|R(x)x-R(y)y|^{-\a}&=\frac{(1-\frac{\a}{2}\phi(x))(1-\frac{\a}{2}\phi(y))+\frac{\a(4-\a)}{4}
\phi(x)\phi(y)-\frac{\a}{2}(\psi(x,y)+\eta(x,y))}{|x-y|^{\a}} \\
\end{aligned}
\end{equation}
where
\[
0\le \eta(x,y)\le C\left(\phi^2(x)+\phi^2(y)+\psi^2(x,y)\right).
\]
By \eqref{stab4} we get

\begin{equation}\label{stab5}
\begin{aligned}
 &\int_{ \partial B\times\partial  B}\left(\frac{1}{|R(x)-R(y)|^\a}-
\frac{(1-\frac\a2\phi(x))(1-\frac\a2\phi(y))}{|x-y|^\a} \right) g(x) g(y)d \Hdm(x)\,d \Hdm(y)\\
&=\frac{\a(4-\a)}{4}\int_{\pa B\times\partial
B}\frac{\vphi(x)\vphi(y)}{|x-y|^\a} g(x) g(y)\,d \Hdm(x)d \Hdm(y)\\
&\quad -\frac{\a}{2}\int_{\partial B\times\pa B}\frac{\psi(x,y) + \eta(x,y)}{|x-y|^\a} g(x) g(y)\,d \Hdm(x) d
\Hdm(y).
\end{aligned}
\end{equation}
 By Corollary \ref{Qephi} we get
\[
\int_{\pa B\times\partial
B}\frac{\vphi(x)\vphi(y)}{|x-y|^\a}\,d \Hdm(x)d \Hdm(y)=  \I_\a^{\partial B}(\phi)\le C  \dP(E).
\]
Furthermore, we have 
\[
0\le \psi(x,y)\le \LinfB{\nabla \phi}^2\le\varepsilon_0, 
\]
and
\[
 \int_{\partial B\times\partial B}\frac{\varphi(x)^2\,d\H^{d-1}(x)d\H^{d-1}(y)}{|x-y|^\a}=\int_{\partial
B}\frac{d\H^{d-1}(y)}{|x-y|^\a}\int_{\partial
B}\varphi(x)^2\,d\H^{d-1}(x)\le c(\a,d)\varepsilon_0^2,
\]
for a suitable constant $c(\a,d)$. Therefore, since $\eta(x,y)\le  C\left( \phi^2(x)+\phi^2(y)+\psi(x,y)\right)$, to
prove
\eqref{stab2} we only have to check that
\[
\int_{\partial B\times\pa B} \frac{\psi(x,y)}{|x-y|^\alpha} \,d \Hdm(x)d \Hdm(y)\le C \dP(E).
\]

\noi To this aim, consider $x,y$ in $\partial B$ and denote by $\Gamma_{x,y}$ the geodesic going from $x$ to
$y$ and by
$\ell(x,y)$ the
geodesic distance between $x$ and $y$ (that is the length of $\Gamma_{x,y}$).
Notice that on $\partial B$, the euclidean distance and $\ell$ are equivalent so that it is enough proving
\[
\int_{\partial B\times\pa B} \ell(x,y)^{-(\a+2)}(\phi(x)-\phi(y))^2\,d\H^{d-1}(x)d\H^{d-1}(y)\le C \dP(E).
\]
We have
\begin{align*}
\int_{\partial B\times\pa B} &\ell(x,y)^{-(\a+2)}(\phi(x)-\phi(y))^2\\
&\le c(d) \int_{\partial B\times\pa B}
\ell(x,y)^{-(\a+1)} \int_{\Gamma_{x,y}} |\nabla \phi|^2(z) dz \ d \Hdm(x) d \Hdm(y)\\
&\le c(d)\int_{\partial B}\int_0^{2\pi} t^{-(\a+1)}t^{d-1}\left(\int_{\{\ell(x,z)\le t\}} |\nabla \phi|^2 (z)
d\Hdm(z)\right) dt\, d \Hdm(x)\\
&=c(d)\int_0^{2\pi} t^{(d-1)-(\a+1)}\left(\int_{\partial B}\int_{\{\ell(x,z)\le t\}} |\nabla \phi|^2 (z) d \Hdm(x)
d\Hdm(z)\right) dt\\
&=c(d)\mathcal{H}^{d-2}(\mathbb S^{d-2})\int_0^{2\pi} t^{(d-1)-\a}\left(\int_{\partial B} |\nabla \phi|^2 (z)
d\Hdm(z)\right) dt\\
&=c(d)\mathcal{H}^{d-2}(\mathbb S^{d-2})\int_0^{2\pi} t^{(d-1)-\a} dt \left(\int_{\partial B} |\nabla \phi|^2 (z)
d\Hdm(z)\right)\\
&\le 
C\dP(E)
\end{align*}
where $\mathbb S^{d-2}$ is the $(d-2)$-dimensional sphere and where we used the fact that $\a<d-1$ together with
\eqref{controllodp}.
\end{proof}

\noi Before we  prove our main stability estimates, we recall a classical
interpolation inequality.

\begin{lemma}
 For every $0\le p<q<r<+\infty$, there exists a constant $C(r,p,q)$ such that for every $\p\in H^{r}(\R^d)$, 
 there holds
\begin{equation}\label{interpol}
 \|\p\|_{\stackrel{}{H^q(\R^d)}}\le C  \left( \|\p\|_{\stackrel{}{H^r(\R^d)}}\right)^{\frac{r-q}{r-p}}\left(
\|\p\|_{\stackrel{}{H^p(\R^d)}}\right)^{\frac{q-p}{r-p}},
\end{equation}
where we adopted the notation $\|u\|_{H^p(\R^d)}:=\||\xi|^p\hat
u\|_{L^2(\R^d)}$
and
$H^{p}(\R^d):=\{u\in L^2(\R^d):\|u\|_{H^p}<+\infty\}$, being $\hat u$ the Fourier transform of the function $u$. 
\end{lemma}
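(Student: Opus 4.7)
The plan is a direct application of Plancherel's theorem and H\"older's inequality on the Fourier side. By the very definition of the norms, $\|u\|_{H^s(\R^d)}^2 = \int_{\R^d}|\xi|^{2s}|\hat u(\xi)|^2\,d\xi$ for every admissible $s\ge 0$, so the inequality \eqref{interpol} reduces to an estimate between weighted $L^2$-integrals with weights $|\xi|^{2q}$, $|\xi|^{2p}$ and $|\xi|^{2r}$.

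The key step is to introduce the unique $\theta\in(0,1)$ such that $q=(1-\theta)p+\theta r$, namely $\theta=(q-p)/(r-p)$ and $1-\theta=(r-q)/(r-p)$; both lie in $(0,1)$ thanks to the strict ordering $p<q<r$. This yields the pointwise factorization
\[
|\xi|^{2q}|\hat\p(\xi)|^2 \;=\; \bigl(|\xi|^{2p}|\hat\p(\xi)|^2\bigr)^{1-\theta}\,\bigl(|\xi|^{2r}|\hat\p(\xi)|^2\bigr)^{\theta}
\]
at every $\xi\ne 0$. Integrating over $\R^d$ and applying H\"older's inequality with conjugate exponents $1/(1-\theta)$ and $1/\theta$ produces
\[
\|\p\|_{H^q(\R^d)}^2 \;\le\; \|\p\|_{H^p(\R^d)}^{2(1-\theta)}\,\|\p\|_{H^r(\R^d)}^{2\theta},
\]
which, after taking square roots and substituting the values of $\theta$ and $1-\theta$, reproduces the interpolation bound \eqref{interpol} (with constant $C=1$ in this homogeneous formulation).

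There is essentially no obstacle here: once Plancherel transfers the problem to the Fourier side, the whole argument is a one-line H\"older estimate, expressing the classical logarithmic convexity of the scale of weighted $L^2$-norms indexed by $s$. The one point of hygiene is that \eqref{interpol} should be read with the convention that the right-hand side equals $+\infty$ whenever one of the endpoint norms $\|\p\|_{H^p(\R^d)}$ or $\|\p\|_{H^r(\R^d)}$ is infinite, so no additional assumption on $\p$ beyond finiteness of the two endpoint norms is required for the statement to be meaningful. The bookkeeping of which power of $(r-q)/(r-p)$ and $(q-p)/(r-p)$ attaches to which endpoint norm amounts only to substituting the explicit values of $\theta$ and $1-\theta$ into the H\"older conclusion above.
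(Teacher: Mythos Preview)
Your proof is correct, but it takes a different route from the paper's. You apply H\"older's inequality directly on the Fourier side after writing $|\xi|^{2q}|\hat\p|^2$ as a product of the two endpoint densities raised to the powers $1-\theta$ and $\theta$; this is the cleanest argument and even gives the sharp constant $C=1$. The paper instead splits the frequency domain at a level $\{|\xi|=\lambda\}$, bounds the low-frequency part by $\lambda^{2(q-p)}\|\p\|_{H^p}^2$ and the high-frequency part by $\lambda^{-2(r-q)}\|\p\|_{H^r}^2$, and then optimizes over $\lambda$. Your approach is shorter and sharper; the paper's splitting-and-optimization argument is marginally more flexible, since it carries over unchanged to situations where only one-sided inequalities (rather than an exact pointwise factorization) are available. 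One small bookkeeping remark: both your H\"older computation and the paper's optimization actually produce the exponent $(q-p)/(r-p)$ on $\|\p\|_{H^r}$ and $(r-q)/(r-p)$ on $\|\p\|_{H^p}$, which is the reverse of what is printed in \eqref{interpol}; the displayed statement has the two exponents interchanged, but this is immaterial for the only place the lemma is used later (where the right-hand side is immediately bounded by the sum of the two endpoint norms via Young's inequality).
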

\begin{proof}
Let $\p\in H^r(\R^d)$ and $\lambda>0$, then we have
\begin{align*}
  \|\p\|^2_{\stackrel{}{H^q(\R^d)}}&=\int_{\R^d} |\hat \p|^2 |\xi|^{2q} d\xi=\int_{|\xi|\le \lambda} |\hat \p|^2
|\xi|^{2p} |\xi|^{2(q-p)}d\xi+\int_{|\xi|\ge \lambda} |\hat \p|^2 |\xi|^{2r} |\xi|^{2(q-r)}d\xi\\
&\le \lambda^{2(q-p)} \|\p\|^2_{\stackrel{}{H^p(\R^d)}}+ \lambda^{-2(r-q)} \|\p\|^2_{\stackrel{}{H^r(\R^d)}}.
\end{align*}
An optimization in $\lambda$ yields \eqref{interpol}.
\end{proof}

\begin{proposition}\label{mainstab}
Let $\a\in [d-2,d-1)$, $f\in L^\infty(\pa E)$ and 
$$
\partial E=\big\{R(x)x \, :\, R(x)=1+\p(x),\, x\in\partial B\big\}.
$$ 
Then there exist $\eps_0(\alpha)>0$ and $C=C(\a)>0$ such that if $\WinfB{\p}\le \eps_0$  then
\begin{equation}\label{mainstabequation}
\I_\a^{\partial E}(f)-\I_\a^{\partial B}(\bar f)
    \ge -C\|f\|^2_{L^\infty(\partial E)}  \dP(E),
\end{equation}
where $\bar f:= \ds\frac{1}{\P(E)}\int_{\partial E} f d\Hdm$.
\end{proposition}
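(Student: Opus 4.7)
The plan is to pull $\I_\a^{\partial E}(f)$ back to $\partial B$ via the radial parametrization, apply Lemma \ref{shape} to replace the perturbed kernel, and then exploit the rotational symmetry of the Riesz kernel on $\partial B$. This reduces the problem to controlling a single cross-term involving $\int_{\partial B}\phi\,\tilde F\,d\Hdm$, where $\tilde F$ is the mean-zero part of the pulled-back density; the hard part will be gaining the correct scaling $\dP(E)$ for this term, which requires a duality argument in negative Sobolev spaces together with a Young-type absorption into the positive $\I_\a^{\partial B}$ contribution.

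First, I set $F(\omega):=f(R(\omega)\omega)\,J(\omega)$ on $\partial B$, where $J$ is the surface Jacobian of $\omega\mapsto R(\omega)\omega$, so that $\|F\|_{L^\infty(\partial B)}\le C\|f\|_{L^\infty(\partial E)}$ and $\int_{\partial B} F\,d\Hdm=\bar f\,P(E)$. Writing $\I_\a^{\partial E}(f)$ as a double integral over $\partial B\times\partial B$ and applying Lemma \ref{shape} with $g=F$ yields
$$\I_\a^{\partial E}(f)=\I_\a^{\partial B}(h)+O\bigl(\|f\|_\infty^2\,\dP(E)\bigr),\qquad h:=\bigl(1-\tfrac\a 2\phi\bigr)F.$$
By rotational invariance, $\omega'\mapsto\int_{\partial B}|\omega-\omega'|^{-\a}\,d\Hdm(\omega)$ is a constant $K_0$, so decomposing $h=\bar h+(h-\bar h)$ around its mean kills the cross term and gives $\I_\a^{\partial B}(h)=K\bar h^2+\I_\a^{\partial B}(h-\bar h)$ with $K:=\I_\a^{\partial B}(1)$. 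A direct computation produces $\bar h=\bar f\,P(E)/P(B)-\tfrac\a 2\,\xi$ with $\xi:=\tfrac{1}{P(B)}\int_{\partial B}\phi F\,d\Hdm$.

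Next, decompose $F=\bar F+\tilde F$ with $\bar F:=\bar f\,P(E)/P(B)$ and $\tilde F$ of mean zero on $\partial B$; then $h-\bar h=\tilde F+\tfrac\a 2(\xi-\phi F)$, with $\xi-\phi F$ also of mean zero. The Cauchy--Schwarz estimate in the proof of Corollary \ref{Qephi}, together with \eqref{controllodp}, gives $\I_\a^{\partial B}(\phi F)\le K_0\|F\|_\infty^2\|\phi\|_{L^2}^2\le C\|f\|_\infty^2\dP(E)$, and a direct expansion then shows $\I_\a^{\partial B}(\xi-\phi F)\le\I_\a^{\partial B}(\phi F)\le C\|f\|_\infty^2\dP(E)$. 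Cauchy--Schwarz for the bilinear form $\I_\a^{\partial B}(\cdot,\cdot)$ plus Young's inequality produce
$$\I_\a^{\partial B}(h-\bar h)\ge\tfrac12\,\I_\a^{\partial B}(\tilde F)-C\|f\|_\infty^2\,\dP(E).$$

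It remains to show $K(\bar h^2-\bar f^2)+\tfrac12\,\I_\a^{\partial B}(\tilde F)\ge -C\|f\|_\infty^2\,\dP(E)$. Since $|\bar h+\bar f|\le C\|f\|_\infty$ and $|\bar h-\bar f|\le C\|f\|_\infty\dP(E)+C|\xi|$, this reduces to estimating $|\xi|$. Splitting $\int\phi F=\bar F\int\phi+\int\phi\tilde F$, the first summand is $O(\|f\|_\infty\dP(E))$ by \eqref{E=B} and Lemma \ref{lemmapoinc}. For the second, I invoke the $H^s$--$H^{-s}$ duality on $\partial B$ with $s=(d-1-\a)/2\le 1/2$: by \eqref{controllodp} and the interpolation inequality \eqref{interpol}, $\|\phi\|_{H^s(\partial B)}^2\le C\|\phi\|_{H^1(\partial B)}^2\le C\dP(E)$; and by the spherical-harmonic diagonalization of $\I_\a^{\partial B}$, whose eigenvalues decay like $n^{-(d-1-\a)}$, one has $\|\tilde F\|_{H^{-s}(\partial B)}^2\le C\,\I_\a^{\partial B}(\tilde F)$. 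Hence $|\xi|\le C\|f\|_\infty\dP(E)+C\sqrt{\dP(E)\,\I_\a^{\partial B}(\tilde F)}$, and Young's inequality absorbs $C\|f\|_\infty\sqrt{\dP(E)\,\I_\a^{\partial B}(\tilde F)}$ into $\tfrac14\,\I_\a^{\partial B}(\tilde F)+C\|f\|_\infty^2\,\dP(E)$, completing the proof. The main obstacle is precisely this last step: without the negative-Sobolev duality, the cross term would yield only $\sqrt{\dP(E)}$, which is one order too weak; the extra factor $\sqrt{\I_\a^{\partial B}(\tilde F)}$ coming from duality is what enables absorption into the positive $\I_\a^{\partial B}(h-\bar h)$ contribution to recover the sharp rate $\dP(E)$.
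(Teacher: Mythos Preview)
Your proposal is correct and follows essentially the same architecture as the paper's proof: pull back to $\partial B$, invoke Lemma \ref{shape} to replace the kernel, decompose around the mean, control the terms quadratic in $\phi$ via Corollary \ref{Qephi} and Lemma \ref{lemmapoinc}, and handle the single dangerous cross term by a Sobolev duality estimate absorbed into the positive contribution $\I_\a^{\partial B}(\tilde F)$ via Young's inequality. The one substantive difference is that for the duality step you diagonalize $\I_\a^{\partial B}$ in spherical harmonics on $\partial B$ (yielding the $H^{(d-1-\a)/2}(\partial B)$--$H^{-(d-1-\a)/2}(\partial B)$ pairing), whereas the paper extends $\phi$ to $\R^d$ and uses the Fourier representation of the Riesz energy from Theorem \ref{positiveenergy} to obtain the $H^{(d-\a)/2}(\R^d)$ estimate; both routes produce the factor $\sqrt{\dP(E)\,\I_\a^{\partial B}(\tilde F)}$ needed for absorption, and the spherical-harmonic version has the minor advantage of avoiding the extension.
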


\begin{proof}
We have
\begin{equation}\label{stima1}
\begin{aligned}
\I_\a^{\partial E}(f) &=\int_{ \partial E\times\partial  E}\frac{f(x)f(y)}{|x-y|^\a} \,d \Hdm(x)\,d \Hdm(y)\\
&=\int_{\partial
B\times \partial B}\frac{{g}(x){g}(y)}{|R(x)-R(y)|^\a}\,d \Hdm(x)d \Hdm(y)
\end{aligned}
\end{equation}
where we set 
\[
 g(x)=f(R(x)x)R(x)^{d-2}\sqrt{R(x)^2+|\nabla R(x)|^2}.
 \]

Up to choose $\varepsilon_0$ small enough, we can suppose that 
\begin{equation}\label{fgeg}
\|g\|_{L^\infty(\partial B)}\le 2 \|f\|_{L^\infty(\partial E)}.
\end{equation}
Let $\bar g:=\ds\frac{1}{\P(B)}\int_{\partial B} g d\Hdm=\frac{P(E)}{P(B)} \bar f$. Then we have
\[
 \Q^{\partial E}(f)-\Q^{\partial B}(\ov f)= \Q^{\partial E}(f)-\Q^{\partial B}(\ov g)+\Q^{\partial B}(\ov
g)-\Q^{\partial B}(\ov f).
\]
Focusing on the last two terms in the previous equality we have
\[
 \begin{aligned}
  \left|\Q^{\partial B}(\ov g)-\Q^{\partial B}(\ov f)\right|&=\Q^{\partial B}(\ov f)
\left|1-\left(\frac{\P(E)}{\P(B)}\right)^2\right|\\
&=C {\bar f}^2 \frac{\P(E)+\P(B)}{\P(B)^2} |\P(E)-\P(B)|\\
&\le  C(\a,d)\|f\|^2_{L^\infty(\partial E)}\dP(E).
 \end{aligned}
\]
Therefore, to prove \eqref{mainstabequation} we only need to show that

\begin{equation}\label{tesi2}
 \I_\a^{\partial E}(f)\ge \I_\a^{\partial B}(\bar g) - \|g\|^2_{L^\infty(\partial B)} \,   \dP(E).
\end{equation}

\noi Formula \eqref{stima1} together with Lemma \ref{shape} imply
\[
\I_\a^{\partial E}(f)=\I_\a^{\partial B}\left(g(1-\frac\a2\phi)\right)+\mathcal R(g,\phi)
\]
with 
\[
|\mathcal R(g,\phi)|
\le c\| g\|_{L^\infty(\partial E)}^2\,   \dP(E),
\]
so that 
\begin{equation}\label{q1b}
\I_\a^{\partial E}(f)\ge \I_\a^{\partial B}\left({g}(1-\frac\a2\phi)\right)-c \| g\|_{L^\infty(\partial
E)}^2\,   \dP(E).
\end{equation}

We need to estimate $\Q^{\partial B}(g(1-\a/2)\varphi)$. By the bilinearity of $\Q^{\partial B}$ we have that

\begin{equation}\label{contone}
\begin{aligned}
\I_\a^{\partial B}({g}(1-\frac\a2\phi))=&\I_\a^{\partial B}({g}(1-\frac\a2\phi),{g}(1-\frac\a2\phi))
\\
=&\I_\a^{\partial B}({g},{g})-\a\I_\a^{\partial B}({g},{g}\phi)
+\frac{\a^2}{4}\I_\a^{\partial B}({g}\phi,{g}\phi)
\\
=&\I_\a^{\partial B}(\bar {g},\bar {g})+\I_\a^{\partial B}({g}-\bar {g},{g}-\bar {g})-\a\I_\a^{\partial B}({g}-\bar
{g},{g}
\phi)
-\a\I_\a^{\partial B}(\bar {g}, {g} \phi)
\\
&+\frac{\a^2}{4}\I_\a^{\partial B}(\bar {g}\phi,\bar {g}\phi)+\frac{\a^2}{2}\I_\a^{\partial B}(\bar
{g}\phi,({g}-\bar
{g})\phi)
+\frac{\a^2}{4}\I_\a^{\partial B}(({g}-\bar {g})\phi,({g}-\bar {g})\phi)
\\
=&\I_\a^{\partial B}(\bar {g})+\I_\a^{\partial B}({g}-\bar {g})
+\frac{\a^2}{4}\I_\a^{\partial B}(({g}-\bar {g})\phi)
-\a\I_\a^{\partial B}({g}-\bar {g},({g}-\bar {g})\phi)\\
&-\a\I_\a^{\partial B}(\bar {g}, ({g}-\bar {g}) \phi)
-\a\I_\a^{\partial B}({g}-\bar {g},\bar {g}\phi)
+\frac{\a^2}{2}\I_\a^{\partial B}(\bar {g}\phi,({g}-\bar {g})\phi)
\\
&-\a\I_\a^{\partial B}(\bar {g}, \bar {g} \phi)
+\frac{\a^2}{4}\I_\a^{\partial B}(\bar {g}\phi).
\end{aligned}
\end{equation}
Thanks to \eqref{q2}, the last two terms in the right hand side of \eqref{contone} satisfy:
\begin{equation}\label{c1}
 -\I_\a^{\partial B}(\bar {g}, \bar {g}\vphi)
+\frac{\a}{4}\I_\a^{\partial B}(\bar {g}\phi)\ge-c\bar g^2 \,   \dP(E).
 \end{equation}
By the Cauchy-Schwarz inequality \eqref{cs} and Young's inequality, we get that  for every functions $h_1$ and $h_2$ and for any
$\eps>0$,   
\begin{equation}\label{young}
\I_\a^{\partial B}(h_1,h_2)\le \I_\a^{\partial B}(h_1)^\frac 1 2 \I_\a^{\partial B}(h_2)^\frac 1 2\le
\eps\I_\a^{\partial B}(h_1)+\frac{1}{4\eps}\I_\a^{\partial B}(h_2).
\end{equation}
In particular, applying such inequality to the functions $h_1=g-\bar g$ and $h_2=(g-\bar g)\vphi$ in the fourth
term in the right hand side of \eqref{contone}, and then to  $h_1=g-\bar g$ and $h_2=\bar g\vphi$ in the
sixth term,
and exploiting \eqref{c1}, we  obtain the existence of a positive constant $C$ such that
\begin{equation}\label{conto}
 \begin{aligned}
&\I_\a^{\partial B}({g}(1-\frac\a2\phi))-\I_\a^{\partial B}(\bar g)\\
&\ge C\left(\frac{1}{2}\I_\a^{\partial B}(g-\bar g) -  \I_\a^{\partial B}(\bar {g}, ({g}-\bar{g})
\phi)-\I_\a^{\partial B}
((g-\bar g)\vphi)-\bar {g}^2 \,   \dP(E)\right).
\end{aligned}
\end{equation}
Again, by Lemma \ref{lemmapoinc}, we have that
\[
 -\I_\a^{\partial B}((g-\bar g)\vphi)\ge -\|g\|_{L^\infty(\partial
B)}^2\I_\a^{\partial B}(\vphi)\ge-C\|g\|_{L^\infty(\partial B)}^2\,   \dP(E).
\]

\noi Let us show that the term $\I_\a^{\partial B}(\bar {g}, ({g}-\bar {g}) \phi)$ 
can be estimated by the term $\I_\a^{\partial B}({g}-\bar {g})$.

\noi Let $\widetilde\phi:\R^d\to \R$ be a regular extension of $\phi$, and let $\widetilde g= ({g}-\bar
{g})d\mathcal
H^{d-1}\res{\partial B}$.
By a Fourier transform we get
\begin{equation*}
\begin{aligned}
\I_\a^{\partial B}(\bar {g}, ({g}-\bar {g}) \phi) =& \int_{\partial B}\frac{d\mathcal H^{d-1}(x)}{|x-y|^\a}\bar
{g}\int_{\partial B}({g}-\bar {g})\,d\mathcal H^{d-1}(y)\phi
= c(\a,d) \bar {g}\int_{\R^d}\widehat{\widetilde\phi}\widehat{\widetilde g}
\\
\le& \bar {g} \left( \int_{\R^d} \widehat{\widetilde\vphi}^2|\xi|^{d-\alpha}\right)^\frac 1 2 
\left(\int_{\R^d}\frac{\widehat{\widetilde g}^2}{|\xi|^{d-\alpha}}\right)^\frac 1 2
\\
=&   \bar {g} \|{\widetilde\phi}\|_{H^\frac{d-\alpha}{2}(\R^d)}
\I_\a^{\partial B}({g}-\bar {g},{g}-\bar {g})^\frac{1}{2}
\\
\le& C(d) \bar {g} \|\phi\|_{H^\frac{d-\alpha}{2}(\partial B)}
\I_\a^{\partial B}({g}-\bar {g})^\frac{1}{2}. 
\end{aligned}
\end{equation*}

\noi We  now observe that, if 
\begin{equation}\label{esso}
\I_\a^{\partial B}(\bar {g}, ({g}-\bar {g}) \phi)\le 
\frac 1 {2} \I_\a^{\partial B}({g}-\bar {g}),
\end{equation}
then we would get
\[
\I_\a^{\partial B}({g}(1-\frac\a2\phi))-\I_\a^{\partial B}(\bar {g})
\ge -C\|\bar {g}\|_{L^\infty(\partial B)}^2 \,   \dP(E),
\]
which would imply \eqref{tesi2} and so the claim of the proposition. On the other hand if \eqref{esso} does not
hold, then, up to consider again a regular extension $\tilde \varphi:\R^d\to\R$ of $\varphi$, we have 
\[
\I_\a^{\partial B}({g}-\bar {g})<C(d)\bar {g} \|\phi\|_{H^\frac{d-\alpha}{2}(\partial B)}
\I_\a^{\partial B}({g}-\bar {g})^\frac{1}{2},
\]
which implies
\[
\I_\a^{\partial B}({g}-\bar {g})^\frac 1 2<C\bar {g} \|\phi\|_{H^\frac{d-\alpha}{2}(\partial B)},
\]
so that
\[
\I_\a^{\partial B}(\bar {g}, ({g}-\bar {g}) \phi)\le 
C \bar {g} \|{\phi}\|_{H^\frac{d-\alpha}{2}(\partial B)}
\I_\a^{\partial B}({g}-\bar {g})^\frac{1}{2} 
\le C \bar {g}^2\|\phi\|_{H^\frac{d-\alpha}{2}(\partial B)}^2.
\]
If $\frac{d-\alpha}{2}\le1$ then using \eqref{interpol} with $p=0$, $q=\frac{d-\alpha}{2}$ and $r=1$, up to once
again regularly extend $\varphi$ on $\R^d$, we obtain
\[\|\phi\|_{H^\frac{d-\alpha}{2}(\partial B)}^2\le c_0
\left(
\|\phi\|_{\stackrel{}{H^1(\partial B)}}^2\right)^{1-\frac{d-\a}{2}}\left(\|\phi\|_{\stackrel{}{L^2(\partial
B)}}^2\right)^{\frac{d-\a}{2}}
\le c_1\left( \|\phi\|_{\stackrel{}{H^1(\partial B)}}^2+ \|\phi\|_{\stackrel{}{L^2(\partial B)}}^2\right)\le C  
\dP(E),
\]

 which  concludes the proof.
\end{proof}

\begin{theorem}\label{stabilitycore}
Let $d\ge 3$ and $\a=d-2$. Then for any $\delta>0$ and $m\ge \omega_d \delta^d$, there exists a charge $\bar Q
\lt(\frac{\delta}{m^{1/d}}\rt)>0$,  such that if 
$$\frac{Q}{m^\frac{d-1+\a}{2d}}\,\le\,\bar Q \lt(\frac{\delta}{m^{1/d}}\rt)
$$
the ball is the
unique minimizer of problem \eqref{pbcnoco}.  
\end{theorem}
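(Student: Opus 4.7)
The plan is to combine Theorem \ref{mainexistence3intro} (existence in $\Kd$) with the key non-local estimate of Proposition \ref{mainstab}, the $L^\infty$ control on the capacitary density from Proposition \ref{regolarita}, and the sharp quantitative isoperimetric inequality of \cite{FMP}. First, by the scaling identity \eqref{scaleF}, it suffices to work in the normalized regime $|E|=\omega_d$, with a rescaled ball radius $\tilde\delta$ comparable to $\delta/m^{1/d}$ and a rescaled charge $\tilde Q$ proportional to $Q/m^{(d-1+\a)/(2d)}$. Existence of a minimizer $E\in\Kd$ of the rescaled functional is then provided by Theorem \ref{mainexistence3intro} (whose smallness assumption is compatible with the one I will impose on $\bar Q$).

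Second, I would argue that any such minimizer is a $W^{1,\infty}$-small normal graph over a unit ball, so that Proposition \ref{mainstab} applies. Comparing with $B$ gives $\dP(E) \leq \tilde Q^2 \Q(B)$. The quantitative isoperimetric inequality of \cite{FMP} then yields, up to a translation, $|E\Delta B|^2 \leq C \dP(E) \leq C\tilde Q^2$. Since $E\in\Kd$ with $\tilde\delta$-ball condition and $B\in\Kd$, this $L^1$-smallness upgrades, by an Arzel\`a--Ascoli argument using the uniform $C^{1,1}$-regularity provided by the two-sided $\tilde\delta$-ball condition (see \cite{DelZol}), to Hausdorff convergence of $\partial E$ to $\partial B$ and then to a $W^{1,\infty}$ bound $\|\p\|_{W^{1,\infty}(\partial B)}\leq \eta$ for the defining function $\p$ with $\partial E=\{(1+\p(x))x\}$. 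A further small translation places the barycenter of $E$ at the origin, introducing only higher-order terms; all hypotheses of Proposition \ref{mainstab} are then satisfied.

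Third, I apply Proposition \ref{mainstab} with $f$ the optimal density for $\Q(E)=\Q(\partial E)$ (which exists, is supported on $\partial E$, and lies in $L^\infty(\partial E)$ with $\LinfE{f}\leq C\Q(E)/\tilde\delta\leq C/\tilde\delta$ by Lemma \ref{F=G} and Proposition \ref{regolarita}). The normalization $\int_{\partial E} f\,d\Hdm=1$ gives $\bar f=1/P(E)$, and Lemma \ref{ball} computes $\I_\a^{\partial B}(\bar f)=(P(B)/P(E))^2\Q(B)\ge \Q(B)-C\dP(E)$. Plugging into \eqref{mainstabequation} yields
\[
\Q(E)-\Q(B)\ \ge\ -C\,\dP(E)\bigl(1+\tilde\delta^{-2}\bigr)\ \ge\ -C\,\tilde\delta^{-2}\,\dP(E),
\]
and therefore
\[
\F(E)-\F(B)\ =\ \dP(E)+\tilde Q^2\bigl(\Q(E)-\Q(B)\bigr)\ \ge\ \bigl(1-C\tilde Q^2/\tilde\delta^2\bigr)\,\dP(E).
\]
Choosing the threshold $\bar Q(\tilde\delta)=c\,\tilde\delta$ (with $c>0$ depending only on $d$), the right-hand side is strictly positive whenever $\dP(E)>0$. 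This forces $\dP(E)=0$, and the equality case of the isoperimetric inequality then forces $E=B$ up to translation. Undoing the rescaling yields the theorem with $\bar Q(\delta/m^{1/d})$ proportional to $\delta/m^{1/d}$.

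The main obstacle I anticipate is the $L^1$-to-$W^{1,\infty}$ upgrade in the second step: turning the quantitative isoperimetric control into a $W^{1,\infty}$ bound suitable for Proposition \ref{mainstab} uses the two-sided $\delta$-ball condition in an essential way and requires verifying both that $\partial E$ is graphical over $\partial B$ and that the resulting $\p$ is small in $W^{1,\infty}$. The only other delicate point is keeping track of the $\tilde\delta$-dependence in $\|f\|_{L^\infty(\partial E)}$ (the $\tilde\delta^{-2}$ factor dictates the final threshold $\bar Q\sim\tilde\delta$) and ensuring that the translation needed to center the barycenter of $E$ produces only higher-order corrections compatible with the hypothesis \eqref{barE=0}.
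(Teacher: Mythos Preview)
Your proposal is correct and follows essentially the same route as the paper: rescale to $|E|=\omega_d$, invoke Theorem \ref{mainexistence3intro} for existence, use the quantitative isoperimetric inequality together with the $\delta$-ball condition to upgrade $L^1$-closeness to $W^{1,\infty}$-smallness of the graph function, and then feed Proposition \ref{regolarita} and Proposition \ref{mainstab} into the comparison $\dP(E)\le \tilde Q^2(\Q(B)-\Q(E))$ to force $\dP(E)=0$. The one overreach is your closing claim that $\bar Q(\tilde\delta)\sim\tilde\delta$: the Arzel\`a--Ascoli step you (rightly) flag as the main obstacle is a compactness argument and yields no explicit threshold, and the existence step already imposes $\tilde Q\lesssim\tilde\delta^d$; the paper accordingly states only that some $\bar Q(\tilde\delta)>0$ exists, without an explicit rate.
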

\begin{proof}
Up to a rescaling we can assume $m=\omega_d$. 
By Theorem \ref{mainexistence3intro}, there exists $C>0$ such that 
problem \eqref{pbcnoco} admits a minimizer $E_Q$ for every $Q\in (0,C\delta^\frac d2)$.
Since $|E_Q\Delta B|^2\le C \dP(E_Q)\le Q^2 \Q(B)$, $E_Q$ 
converges to $B$ in $L^1$ when $Q\to 0$. As in Theorem \ref{mainexistenceintro}, there is also convergence in the
Hausdorff sense
 of $E_Q$ and $\partial E_Q$ thanks to the $\delta$-ball condition. Again, by the $\delta$-ball condition and
the
Hausdorff convergence of the boundaries, 
for $Q$ small enough, $\partial E_Q$ is a graph over $\partial B$ of some $C^{1,1}$ function with $C^{1,1}$ norm
bounded by $2/\delta$. 
>From this we see that if $\partial E_Q=\{(1+\vphi_Q(x))x \ :\ x\in\partial B\} $ then
$\WinfB{\p_Q}$ is converging to $0$. We can thus assume that $\vphi_Q$ satisfies
 the hypotheses of Proposition \ref{mainstab}.

\noi Let $\mu=f d\Hdm\res \partial
E_Q$ be the minimizer of $\Q(E_Q)$.
 Since $\Q(E_Q)\le \P(B)+Q^2\Q(B)$, by Proposition \ref{regolarita}, $\LinfE{f}\le (d-2)\delta^{-1}
(\P(B)+Q^2\Q(B))$. 
Let $ \ds \bar f:=\frac{1}{\P(E_Q)}=\frac{1}{P(E_Q)}\int_{\partial E_Q} f d\Hdm$.
By Lemma \ref{ball} we know that the optimal measure for $\Q(B)$ is given by  $\frac{\Hdm\res\partial B}{\P(B)}$.
By the minimality of $E_Q$ we then have
\[
\begin{aligned}
      \dP(E_Q)&=\P(E_Q)-\P(B)\le Q^2(\Q(B)-\Q(E_Q))\\
    &=Q^2\left(\I_\a^{\partial B}(\bar f)-\I_\a^{\partial E_Q}(f)+\I_\a^{\partial B}(1/\P(B))-\I_\a^{\partial
B}(1/\P(E_Q))\right).
\end{aligned}
\]
A simple computation shows that
\[
 \I_\a^{\partial B}(1/\P(B))-\I_\a^{\partial B}(1/\P(E_Q))\le C^2\,   \dP(E_Q)
\]
for a suitable positive constant $C=C(\a,d)$. Hence, by Proposition \ref{mainstab} we have that
\[
   \dP(E_Q)\le C Q^2\,   \dP(E_Q)(1+\|f\|_{L^\infty(\partial E_Q)}^2) \le C Q^2\,   \dP(E_Q),
\]
which implies $\dP(E_Q)=0$ that is $E_Q=B$, for $Q$ small enough.
\end{proof}

\begin{remark}\rm
 We recall that a counterpart of Theorem \ref{stability} holds as well. Indeed, in \cite{fontfried} it was proven
that if $Q$ overcome a certain threshold, any radial set (and in particular the ball) is unstable under small $C^{2,\beta}$ perturbations.
\end{remark}

\begin{remark}\rm
The previous proof of the stability does not apply to the case $\a>d-2$. Indeed, this proof relies on $L^\infty$
bounds for the optimal measure $\mu$ for $\Q$ which we are not able to obtain in that case. 
For the very same reason,  our approach seems not to work if we replace the class $\Kd$ by the class 
of convex sets. In fact, for a set with Lipschitz boundary, the optimal measure is not  expected to be in
$L^\infty$. In particular, if $E$ is convex, than its optimal measure  blows-up at every non-regular point of
$\partial E$, as shown in Example \ref{acuti}. 
\end{remark}

\section{The logarithmic potential energy}\label{droplets6}
\noi In this section we investigate the same type of questions   for the logarithmic potential which is given 
by $-\log(|x|)$. This potential naturally arises in two dimension where it corresponds to the Coulomb interaction. Let then
\begin{equation}\label{Glog}
\Il( E):=\min_{\mu(\ovE)=1}\ \int_{\R^d\times\R^d} -\log(|x-y|)d\mu(x) d\mu(y)\end{equation} 
and consider the problem
\begin{equation}\label{problog}\min_{|E|=m} \P(E)+Q^2 \Il( E).
\end{equation}
In  analogy to the notation adopted for the Riesz potential we define, for any Borel functions $f$ and $g$, the
following quantity
\[
 \Il^{\partial E}(f,g):=\int_{\partial E\times\partial E}-\log(|x-y|)f(x)\, g(y)d\Hdm(x) \, d\Hdm(y).
\]
We list below some important
properties of $\Il$ without  proof, since they are analogous to those given in Section
\ref{droplets2} for the Riesz potential. We refer to 
\cite{safftotik,landkof} for comprehensive guides  on  the logarithmic potential.

\begin{proposition}
 The following properties hold:
\begin{itemize}
 \item [(i)] for every compact set $E$, there exists a unique optimal measure $\mu$ for $\Il(E)$ which is
concentrated on the boundary of $E$,
\item [(ii)] for every Borel measure $\mu$ it holds 
\[
\Il(\mu)=\int_{\R^d} \left(v^\mu_{d/2}(x)\right)^2 dx\ge 0
\]
 where 
 \[
v_{d/2}^\mu(x)=\int_{\R^d}-\log|x-y|\,d\mu(y),
\]
\item [(iii)] for every smooth set $E$, if $\mu$ is the optimal measure for $\Il$, then the equality
$\displaystyle \int_{ \partial E} -\log(|x-y|) d\mu(y) =\Il(E)$ holds for every
$x\in \partial E$. Moreover the optimal measure for the ball is the uniform measure,
\item [(iv)] if $d=2$, then for every bounded set $E$  satisfying the $\delta$-ball condition, the optimal
measure
is given by some measure $\mu= f \Hdm\res \partial E$ with $\LinfE{f}\le \frac{\Il(E)}{|\log(\delta)|}$.
\end{itemize}

\end{proposition}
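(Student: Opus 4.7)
The plan is to verify each of the four claims by transposing, respectively, Lemma \ref{Q1}, Theorem \ref{positiveenergy}, Lemmas \ref{ELmu}--\ref{ball}, and Proposition \ref{regolarita} into the logarithmic setting. The key observation is that $-\log|x-y|$ is lower semicontinuous and bounded below on any bounded set, so after handling the sign change across $|x-y|=1$ and the growth at infinity, the Riesz-type arguments carry over.

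For (i), existence follows from the direct method: the set of probability measures on $\overline E$ is weakly-$*$ compact by Banach--Alaoglu, and $\Il(\mu)=\sup_N\int\int\min(N,-\log|x-y|)\,d\mu(x)\,d\mu(y)$ is weakly-$*$ lower semicontinuous as a supremum of continuous functionals, so a minimizing sequence has a cluster point realizing $\Il(E)$. Uniqueness is immediate from (ii): for two distinct minimizers $\mu_1,\mu_2$, the signed measure $\nu=\mu_1-\mu_2$ has zero total mass with $\Il(\nu)>0$, so the parallelogram identity gives $\Il((\mu_1+\mu_2)/2)=\frac12\Il(\mu_1)+\frac12\Il(\mu_2)-\frac14\Il(\nu)<\Il(E)$, contradicting optimality. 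For $\spt(\mu)\subset\partial E$, I would reproduce the argument of Lemma \ref{F=G}(i): the Euler--Lagrange characterization from (iii) gives $v^\mu_{d/2}\equiv\Il(E)$ on $\spt(\mu)$, and a balayage of any portion of $\mu$ lying in an open ball inside $E$ onto its bounding sphere strictly decreases $\Il(\mu)$.

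For (ii), the strategy is exactly that of Theorem \ref{positiveenergy}. One passes to the Fourier side, where in $\R^2$ the distributional transform of $-\log|x|$ equals $c|\xi|^{-2}=c|\xi|^{-d}$, so Plancherel gives $\Il(\mu)=c\int|\hat\mu|^2|\xi|^{-d}\,d\xi=\int(v^\mu_{d/2})^2\,dx$ after identifying $v^\mu_{d/2}$ as the appropriate Riesz convolution; for $d>2$ a parallel identity holds after a distributional renormalization of the log-kernel. Nonnegativity and the rigidity $\mu=0\Leftrightarrow\Il(\mu)=0$ follow at once. For (iii), the first variation $\mu\mapsto\mu+\eps(\nu-\mu)$ as in Lemma \ref{ELmu} gives the Euler--Lagrange equalities quasi-everywhere, and continuity of the logarithmic potential on smooth boundaries upgrades them to pointwise equality on $\partial E$; the ball statement then follows by rotational symmetrization combined with uniqueness from (i).

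The main work, and the main obstacle, is (iv), the logarithmic counterpart of Proposition \ref{regolarita}. In $\R^2$ the log-kernel is, up to $2\pi$, the fundamental solution of $-\Delta$, so $-\Delta v^\mu_{d/2}=2\pi\mu$; by (iii) $v^\mu_{d/2}\equiv\Il(E)$ on $\overline E$, and the integration-by-parts computation \eqref{barrier2} identifies $\mu$ with $|\nabla v^\mu_{d/2}|\,\Hdm\!\res\partial E/(2\pi)$. To bound $|\nabla v^\mu_{d/2}|$ at $x\in\partial E$, I would pick an internal tangent ball $B_\delta(y)\subset E$ at $x$ and compare $v^\mu_{d/2}$ to the log-potential of the uniform measure of total mass $M=\Il(E)/|\log\delta|$ on $\partial B_\delta(y)$, whose value on $\partial B_\delta(y)$ is exactly $\Il(E)$. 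The exterior maximum principle, suitably adapted to the planar setting, then yields the desired gradient bound and hence $\LinfE{f}\le\Il(E)/|\log\delta|$. The delicate step is precisely this comparison: harmonic functions in $\R^2$ are not bounded at infinity, so the mismatch between the unit-mass asymptotics of $v^\mu_{d/2}$ and the mass-$M$ asymptotics of the barrier must be absorbed either by using $R$-dependent barriers in the annulus $B_\delta(y)^c\cap B_R$ and passing to the limit, or by subtracting an appropriate harmonic corrector. This is the one point where the Riesz argument of Proposition \ref{regolarita} does not transfer verbatim, and it is where the $|\log\delta|$ rather than $\delta^{-1}$ scaling originates.
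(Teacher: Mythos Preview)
The paper does not give a proof of this proposition: immediately before stating it, the text says ``We list below some important properties of $\Il$ without proof, since they are analogous to those given in Section \ref{droplets2} for the Riesz potential,'' and refers to \cite{safftotik,landkof}. Your plan of transposing Lemma \ref{Q1}, Lemma \ref{uniq}, Lemma \ref{F=G}, Theorem \ref{positiveenergy}, Lemmas \ref{ELmu}--\ref{ball}, and Proposition \ref{regolarita} is therefore exactly what the paper asserts can be done, and you go further by isolating the one place where the translation is not automatic, namely the exterior barrier argument for (iv) in the plane.

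One substantive caution on (ii): as literally stated, the inequality $\Il(\mu)\ge 0$ is false for general probability measures (the uniform measure on $\partial B_R$ has logarithmic energy $-\log R<0$ for $R>1$), and the Plancherel integral $\int_{\R^d}|\hat\mu(\xi)|^2|\xi|^{-d}\,d\xi$ you invoke diverges at the origin whenever $\hat\mu(0)\ne 0$. What your Fourier argument actually establishes, and what is genuinely needed both for the uniqueness step in (i) and for the Cauchy--Schwarz manipulations later in Section \ref{droplets6}, is positivity for \emph{signed} measures of zero total mass, exactly parallel to the role of Theorem \ref{positiveenergy} in the Riesz case. The paper's subsequent use of the proposition (for instance in Proposition \ref{stablog}) is confined to sets contained in $B_{1/2}$, where the kernel $-\log|x-y|$ is nonnegative, and to differences of probability measures, so the imprecision in the printed statement does no harm downstream; but your write-up should record the correct hypothesis rather than the one stated.
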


 \noi In this setting, since the potential can be negative, the picture  is slightly
different from that related to the Riesz energy. Indeed, we have the following Theorem.
\begin{theorem}
 The following statements hold true:
\begin{itemize}
 \item[(i)] $\inf_{|E|=m} \P(E)+Q^2 \Il( E)=-\infty$.
\item[(ii)] for any $\delta>0$, if $m>2\omega_d \delta^d$ then  $\inf_{|E|=m, E\in \Kd} \P(E)+Q^2 \Il( E)=-\infty$,
\item[(iii)] for every $Q>0$  and every $m> \omega_d \delta^d$, there exists a minimizer of 
\[
\min_{|E|=m, E\in \Kdco} \P(E)+Q^2 \Il( E),
\]
\item[(iv)] for every bounded smooth domain $\Om$, 
 \[
 \inf_{|E|=m, E\subset \Om} \P(E)+Q^2 \Il( E)=\min_{|E|=m, E\subset \Om} \P(E) +Q^2 \Il(\Om).
 \]
\end{itemize}
\end{theorem}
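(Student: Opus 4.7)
The plan is to establish the four assertions in order, largely by adapting arguments already developed in the Riesz case. For parts (i) and (ii), I would follow the strategy of Theorem \ref{nonexistR}. Given $R\gg 1$, take $E_R:=B_{r_1}(0)\cup B_{r_2}(Re_1)$ with $\omega_d(r_1^d+r_2^d)=m$, and test the definition \eqref{Glog} with the competitor $\mu_R:=\tfrac 1 2(\mu_1+\mu_2)$, where $\mu_i$ denotes the normalized Lebesgue measure on $B_{r_i}$. Splitting $\Il(\mu_R)$ into self and cross terms yields
\[
\Il(E_R)\,\le\,\Il(\mu_R)\,=\,C(r_1,r_2)-\tfrac 1 2\log(R-r_1-r_2)\xrightarrow{R\to+\infty}-\infty,
\]
while $P(E_R)$ stays constant. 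For (i) take $r_1=r_2=(m/(2\omega_d))^{1/d}$; for (ii) choose $r_1,r_2\ge\delta$, which is possible thanks to $m>2\omega_d\delta^d$, and place the balls at mutual distance at least $2\delta$ so that $E_R\in\Kd$.

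For (iii), I would run the direct method inside $\Kdco$. By Lemma \ref{bound} every competitor satisfies $\diam(E)\le D(m,\delta)$, which immediately gives the coercive lower bound $\Il(E)\ge-\log D$, so a minimizing sequence has uniformly bounded perimeter. Following the compactness argument in Theorem \ref{mainexistence}, up to translation and extraction $E_n\to E$ in $L^1$ and in Hausdorff distance, with $E\in\Kdco$ and $|E|=m$. Since all the $E_n$ lie in a fixed ball, the shifted kernel $(x,y)\mapsto -\log|x-y|+\log(2D)$ is nonnegative and lower semicontinuous, so the usual supremum-of-continuous-functionals argument gives weak-$*$ lower semicontinuity of $\mu\mapsto\Il(\mu)$; applying it to a weak-$*$ converging subsequence of optimal measures $\mu_n$ (whose limit is supported on $E$ by Hausdorff convergence of the boundaries), together with the $L^1$-lsc of the perimeter, concludes.

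For (iv), I would mirror the three-step proof of Theorem \ref{main}. A direct adaptation of Proposition \ref{density} furnishes, for every $\eps>0$, a density $f\in L^\infty(\Om)$ with $\int_\Om f\,dx=1$ and $\Il^\Om(f)\le\Il(\Om)+\eps$. Step 1 then runs as before: tile $\Om$ by cubes of side $\lambda$, replace each interior cube by a small sphere of radius $r=(\lambda/2)^\beta$ carrying mass $f_k$, and verify the analogs of the three bounds $I_1,I_2,I_3$. The key substitution is that for $x\in\partial B_r(x_k)$ and $y\in\partial B_r(x_j)$ with $|x_k-x_j|\ge 2\delta$,
\[
-\log|x-y|\,\le\,-\log|x_k-x_j|+C\frac{r+\lambda}{\delta},
\]
so the multiplicative error $(1+C\lambda/\delta)$ of the Riesz case becomes an additive one; the self-interaction of each sphere contributes at most $C\lambda^d|\log\lambda|$, which summed over $\sim\lambda^{-d}$ cubes still vanishes with $\lambda$. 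Steps 2 and 3 then transfer verbatim, while the matching lower bound follows from $E\subset\Om$ implying $\Il(E)\ge\Il(\Om)$ and $P(E)\ge P(E_0)$. The only place requiring care is the loss of homogeneity of the logarithmic kernel, which forces the perturbative estimates of Step 1 to be additive rather than multiplicative; but the slow growth of $-\log$ makes them easier rather than harder to control, and no new restriction on parameters appears.
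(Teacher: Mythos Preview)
Your approach matches the paper's, which also establishes (ii) via two well-separated balls (and notes that (ii)$\Rightarrow$(i)), and simply refers back to the proofs of Theorems \ref{mainexistence} and \ref{main} for (iii) and (iv) respectively; your version is in fact more explicit, in particular the shifted-kernel argument for lower semicontinuity in (iii) and the observation that the perturbative errors in Step~1 of (iv) become additive rather than multiplicative. One small correction in that Step~1: each sphere carries mass $f_k\le\|f\|_\infty\lambda^d$, so its self-interaction is $O(\lambda^{2d}|\log\lambda|)$, and summing over $\sim\lambda^{-d}$ spheres gives $O(\lambda^d|\log\lambda|)\to 0$ --- as written, your per-sphere bound $C\lambda^d|\log\lambda|$ summed over $\lambda^{-d}$ cubes would yield $|\log\lambda|$, which does not vanish.
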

 
\begin{proof}
Statement $(ii)$ implies $(i)$ while $(iii)$ can be proven exactly as in Theorem \ref{mainexistenceintro}
and $(iv)$ as Theorem \ref{main}. To prove $(ii)$ we set $E_n=B_\delta(x_1^n)\cup B_\delta(x_2^n)$ and
notice that if ${\rm dist}(x_1^n,x_2^n)$ goes to infinity, then $\Il( E_n)\to -\infty$ as $n\to+\infty$.   
\end{proof}
\noi Since $\Il(\lambda
E)=\Il(E)-\log(\lambda)$ for every $\lambda>0$,  without loss of generality
we shall assume that  $m=|B_{1/2}|=\pi/4$ in Problem \eqref{problog}. 
The following result is the counterpart of Proposition \ref{mainstab}.

\begin{proposition}\label{stablog}
 Let $d=2$,  $E=\big\{R(x)x \, :\, R(x)=1+\p(x),\, x\in\partial B_{1/2}\big\}$ and let $f\in L^\infty(\pa E)$ then
there exists $\eps_0$ and
  a constant $C=C(\a)>0$ such that if $\WinfBd{\p}\le \eps_0$. Then
\[
\I_\lo^{\partial E}(f)-\I_\lo^{\partial B_{1/2}}(\bar f)
\ge  - C \|f\|^2_{L^\infty(\partial E)} \,   \dP(E),
\]
where $\bar f:=\displaystyle \frac{1}{\P(E)}\int_{\partial E} f d\H^1$.
\end{proposition}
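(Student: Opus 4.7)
The plan is to mirror the proof of Proposition~\ref{mainstab}, substituting the logarithmic kernel for the Riesz kernel and using Fourier analysis on the circle to replace the fractional-Sobolev estimates. First, parametrize $\partial E$ as a graph over $\partial B_{1/2}$ and change variables:
\[
\Il^{\partial E}(f) = \int_{\partial B_{1/2}\times \partial B_{1/2}} -\log|R(x)x - R(y)y|\,g(x)\,g(y)\,d\H^1(x)\,d\H^1(y),
\]
where $g(x) := f(R(x)x)\sqrt{R(x)^2 + |\nabla R(x)|^2}$ (no $R^{d-2}$ factor since $d=2$) and $\|g\|_{L^\infty(\partial B_{1/2})}\le 2\LinfE{f}$ for $\eps_0$ small. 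From $|R(x)x - R(y)y|^2 = |x-y|^2\bigl(1+\vphi(x)+\vphi(y)+\vphi(x)\vphi(y)+\psi(x,y)\bigr)$ with $\psi(x,y) := (\vphi(x)-\vphi(y))^2/(4|x-y|^2)$, taking half a logarithm gives
\[
-\log|R(x)x - R(y)y| = -\log|x-y| - \tfrac12\bigl(\vphi(x)+\vphi(y)\bigr) + \mathcal E(x,y),
\]
with $|\mathcal E(x,y)|\le C\bigl(\vphi(x)^2+\vphi(y)^2+\psi(x,y)\bigr)$. Integrating $\mathcal E$ against $g\otimes g$ is controlled by $C\|g\|_{L^\infty(\partial B_{1/2})}^2\,\dP(E)$, using Lemma~\ref{lemmapoinc} on $\partial B_{1/2}$ and the bound $\iint \psi\,d\H^1\,d\H^1 \le C\int |\nabla \vphi|^2\,d\H^1\le C\,\dP(E)$; this replaces the geodesic computation of Lemma~\ref{shape} and only requires that the $H^{1/2}$-seminorm on the circle be controlled by the $H^1$-norm.

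Next, property (iii) says that $\int -\log|x-y|\,d\H^1(y)$ is constant on $\partial B_{1/2}$, so the cross term vanishes in the splitting
\[
\Il^{\partial B_{1/2}}(g) = \Il^{\partial B_{1/2}}(\bar g) + \Il^{\partial B_{1/2}}(g - \bar g),
\]
with the second summand nonnegative by (ii). The linear-in-$\vphi$ contribution is $-G\int \vphi\,g\,d\H^1$ with $G := \int g\,d\H^1$; split $\int \vphi\,g = \bar g\int \vphi + \int \vphi(g-\bar g)$. The first piece is $O(\bar g\,\dP(E))$ via the volume constraint $|\int \vphi|\le C\|\vphi\|_{L^2}^2$. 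For the second, parametrize $\partial B_{1/2}$ by the angle and write $\vphi = \sum_n a_n e^{in\theta}$, $g - \bar g = \sum_{n\ne 0} b_n e^{in\theta}$. Since the log kernel on the circle has Fourier multiplier $c/|n|$ on zero-mean functions (from the classical identity $-\log|1 - e^{i\theta}| = \sum_{n\ge 1}\cos(n\theta)/n$), one has $\Il^{\partial B_{1/2}}(g-\bar g)\sim \sum_{n\ne 0}|b_n|^2/|n|$, and Cauchy--Schwarz gives
\[
\Bigl|\int \vphi(g-\bar g)\,d\H^1\Bigr| \le C\,\|\vphi\|_{H^{1/2}(\partial B_{1/2})}\,\Il^{\partial B_{1/2}}(g-\bar g)^{1/2}.
\]
Young's inequality absorbs a small multiple of $\Il^{\partial B_{1/2}}(g-\bar g)$ into the nonnegative term above and leaves $C\bar g^2\|\vphi\|_{H^{1/2}}^2$; applying \eqref{interpol} with $p=0$, $q=1/2$, $r=1$ bounds this by $C\bar g^2\,\|\vphi\|_{H^1}\|\vphi\|_{L^2}\le C\|g\|_{L^\infty(\partial B_{1/2})}^2\,\dP(E)$.

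Finally, from $\bar g = \P(E)\bar f/\P(B_{1/2})$ and $\Il^{\partial B_{1/2}}(\bar g) - \Il^{\partial B_{1/2}}(\bar f) = (\bar g^2 - \bar f^2)\iint -\log|x-y|\,d\H^1\,d\H^1$, this last difference is $O(\bar f^2\,\dP(E))$, and assembling all the pieces gives the claimed lower bound. The main obstacle is the Fourier-based bilinear estimate on $\int \vphi(g - \bar g)$: identifying $\Il^{\partial B_{1/2}}$ on zero-mean densities with an $H^{-1/2}$-type norm via the multiplier $1/|n|$ and pairing it with an $H^{1/2}$-control on $\vphi$ is the delicate step, taking the place of the Sobolev interpolation argument used for the Riesz kernel in Proposition~\ref{mainstab}.
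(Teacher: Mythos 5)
Your proposal is correct and follows the same overall architecture as the paper's proof: change variables so that $g(x)=f(R(x)x)\sqrt{R(x)^2+|\nabla R(x)|^2}$, expand $-\log|R(x)x-R(y)y|$ around $-\log|x-y|$ with an error controlled by $\vphi^2$ and $\psi$, split $\I_\lo^{\partial B_{1/2}}(g)$ into $\I_\lo^{\partial B_{1/2}}(\bar g)+\I_\lo^{\partial B_{1/2}}(g-\bar g)$ using that the log-potential of the uniform measure is constant on the circle, isolate the dangerous term $\bar g\int\vphi(g-\bar g)$, and estimate it bilinearly in Fourier against $\I_\lo^{\partial B_{1/2}}(g-\bar g)^{1/2}$ before absorbing by Young. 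The one genuine methodological difference is in the Fourier step: the paper extends $\vphi$ and $(g-\bar g)\H^1\res\partial B_{1/2}$ to $\R^2$ and works with the Fourier transform, producing $\|\widetilde\vphi\|_{H^1(\R^2)}\,\I_\lo^{\partial B_{1/2}}(g-\bar g)^{1/2}$ (and then implicitly relying on a trace/extension equivalence when transferring to a norm of $\vphi$ on the circle, mirroring Proposition~\ref{mainstab}); you instead expand in Fourier series on $S^1$ and use the classical identity $-\log|1-e^{i\theta}|=\sum_{n\ge1}\cos(n\theta)/n$ to identify the log kernel with the multiplier $c/|n|$ on zero-mean functions, from which Cauchy--Schwarz directly yields $\|\vphi\|_{H^{1/2}(\partial B_{1/2})}\,\I_\lo^{\partial B_{1/2}}(g-\bar g)^{1/2}$. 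Your intrinsic version is slightly cleaner: it avoids choosing extensions to $\R^2$ and produces the natural $H^{1/2}$ exponent on the circle without appeal to trace theorems, while the paper's version emphasizes parallelism with the Riesz-kernel computation in Proposition~\ref{mainstab}. Both then conclude identically via the interpolation \eqref{interpol} with $p=0$, $q=1/2$, $r=1$ and Lemma~\ref{lemmapoinc}. (Your factor $1/4$ in $\psi$ is in fact the correct one for the radius-$1/2$ circle, and your observation that $\iint\psi\,d\H^1\,d\H^1\le C\int|\nabla\vphi|^2\,d\H^1$ matches the paper's use of the geodesic argument of Lemma~\ref{shape} with $d=2$, $\a=0$.)
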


\begin{proof}
 Notice that since $E\subset B$, the logarithmic potential is positive. As in the proof of Proposition \ref{mainstab}, we have
\[\I_\lo^{\partial E}(f)=\int_{\partial B_{1/2}\times \partial B_{1/2}} -\log(|R(x)-R(y)|) g(x) g(y) d\H^1(x) d\H^1(y),\]
where $g(x)=f(R(x)x)\sqrt{R(x)^2+|\nabla R(x)|^2}$. Reminding that from \eqref{stab3}, we have 
\[|R(x)x-R(y)y|=|x-y|\left(1+\phi(x)+\phi(y)+\phi(x)\phi(y)+\psi(x,y)\right)^{1/2},\]
where, $\psi(x,y)=\frac{(\phi(x)-\phi(y))^2}{|x-y|^2}$, we see that
\begin{multline*}\I_\lo^{\partial E}(f)=\int_{\partial B_{1/2}\times \partial B_{1/2}} -\log(|x-y|) \,g(x) g(y)\, d\H^1(x) d\H^1(y)\\
+\frac{1}{2} \int_{\partial B_{1/2}\times \partial B_{1/2}} -\log(1+\phi(x)+\phi(y)+\phi(x)\phi(y)+\psi(x,y))\, g(x) g(y)\, d\H^1(x) d\H^1(y).\end{multline*}
As in Proposition \ref{mainstab}, letting $\bar g:=\displaystyle \frac{1}{\P(B_{1/2})}\int_{\partial B_{1/2}} g \, d\H^1$, we have
 \[\I_\lo^{\partial B_{1/2}}(g)=\int_{\partial B_{1/2}\times \partial B_{1/2}} -\log(|x-y|) \,g(x) g(y)\, d\H^1(x) d\H^1(y)=\I_\lo^{\partial B_{1/2}}(\bar g)+\I_\lo^{\partial B_{1/2}}(g-\bar g)\]
 and 
\[\I_\lo^{\partial B_{1/2}}(\bar g)-\I_\lo^{\partial B}(\bar f)\le C\|f\|^2_{L^\infty(\partial E)}\,   \dP(E).\]

Using that for $|t|\le 1$, $|\log(1+t)-t| \le \frac{t^2}{2}$, we see that
\begin{multline*}
 \int_{\partial B_{1/2}\times \partial B_{1/2}} -\log(1+\phi(x)+\phi(y)+\phi(x)\phi(y)+\psi(x,y))\, g(x) g(y)\, d\H^1(x) d\H^1(y)\\
=-\int_{\partial B_{1/2}\times \partial B_{1/2}} \left(\phi(x)+\phi(y)+\phi(x)\phi(y)+\psi(x,y)+\eta(x,y)\right)\, g(x) g(y)\, d\H^1(x) d\H^1(y)
\end{multline*}
where the function $\eta(x,y)$ is well controlled. As in Lemma \ref{shape}, 
\[\int_{\partial B_{1/2}\times \partial B_{1/2}} \phi(x)\phi(y) g(x) g(y)\, d\H^1(x) d\H^1(y)\le C
\|g\|_{L^{\infty}(\partial B_{1/2})}^2\,    \dP(E)\]
and 
\[ \int_{\partial B_{1/2}\times \partial B_{1/2}} \psi(x,y)\, g(x) g(y)\, d\H^1(x) d\H^1(y)\le C
\left(\int_{0}^{2\pi} t\ dt \right)\,  \dP(E).\]
Since
\begin{multline*}\int_{\partial B_{1/2}\times \partial B_{1/2}} \p(x) g(x) g(y) d\H^1(x) d\H^1(y)
=\bar g \int_{\partial B_{1/2}} \p(x) \left(g(x)-\bar g \right)d\H^1(x) \\
+ \bar g^2 \P(B_{1/2}) \int_{\partial B_{1/2}} \phi(x) d\H^1(x)\end{multline*}
and since $ \ds \int_{\partial B_{1/2}} \phi(x) d\H^1(x)\le C   \dP(E)$, we are left to prove that
\begin{equation}\label{lastineqlog}
\I_\lo^{\partial B_{1/2}}(g-\bar g)- \bar g \int_{\partial B_{1/2}} \p(x) \left(g(x)-\bar g \right)d\H^1(x) \ge C \bar g^2 \dP(E).
\end{equation}
As in the proof of Proposition \ref{mainstab}, we use the Fourier transform to assert that for some regular
extension
$\tilde \p$ of $\p$ and for $\widetilde{g}:= (g-\bar g)\H^1\res \partial B_{1/2}$,
\begin{align*}
 \int_{\partial B_{1/2}} \p(x) \left(g(x)-\bar g \right)d\H^1(x)&\le \left(\int_{\R^2}
 \widehat{\widetilde\vphi}^2|\xi|^2 \, d \xi\right)^{1/2} \left(\int_{\R^2} \widehat{\widetilde g}^2 |\xi|^{-2}\, d
\xi\right)^{1/2}\\
&\le C \|\p\|_{\stackrel{}{H^{1}}} \I_\lo^{\partial B_{1/2}}(g-\bar g) 
\end{align*}
 from which \eqref{lastineqlog} follows arguing exactly as in the last part of the proof
 of Proposition \ref{mainstab}.
\end{proof}
\noi Arguing as in the proof of Theorem \ref{stability}, we get the following result.

\begin{corollary}\label{stabilitylog}
Let $d=2$ then for any $\delta>0$ and $m>0$, there exists a $\bar Q\lt( \frac{\delta}{\sqrt{m}}\rt)>0$ such that, if $\frac{Q}{m^{1/4}}<\bar Q\lt( \frac{\delta}{\sqrt{m}}\rt)$, the ball is the unique 
minimizer of problem \eqref{problog} among the sets in $\Kd$ with charge $Q$.  
\end{corollary}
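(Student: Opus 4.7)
My plan is to follow the template of Theorem \ref{stabilitycore}, replacing the Riesz ingredients with their logarithmic analogues supplied by the previous proposition and by Proposition \ref{stablog}. First I would use the scaling identity $\Il(\lambda E)=\Il(E)-\log\lambda$ to reduce to the case $m=|B_{1/2}|$, matching the normalization under which Proposition \ref{stablog} is stated. Note that the ambient ball is $B_{1/2}$ (not $B$) precisely so that $\Il$ is positive on nearby sets, which is important in the sequel.

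Next I would establish existence of minimizers in $\Kd$ for $Q$ small, mirroring Theorem \ref{mainexistence3intro}: comparing with $B_{1/2}$ gives $\dP(E_Q)\le Q^2\Il(B_{1/2})$; the quantitative isoperimetric inequality then forces $|E_Q\Delta B_{1/2}|\to 0$ as $Q\to 0$, and since any connected component of a set in $\Kd$ has volume at least $\omega_d\delta^d$, for small enough $Q$ the minimizer must be connected, so that existence follows from statement (iii) of the proposition on properties of $\Il$. Once $E_Q$ exists, the $\delta$-ball condition upgrades the $L^1$ convergence to Hausdorff convergence of both $E_Q$ and $\partial E_Q$ to $B_{1/2}$ and $\partial B_{1/2}$, and a classical argument then yields that $\partial E_Q$ is a graph $\{(1+\vphi_Q(x))x:x\in\partial B_{1/2}\}$ with $\WinfBd{\vphi_Q}\to 0$, so that for $Q$ small the hypothesis of Proposition \ref{stablog} is met.

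The final step is the energy comparison. Let $\mu_Q=f\,d\H^1\res\partial E_Q$ be the optimal measure for $\Il(E_Q)$; by property (iv) of the logarithmic potential it satisfies $\LinfE{f}\le\Il(E_Q)/|\log\delta|$, which stays bounded as $Q\to 0$. By property (iii) the optimizer for $\Il(B_{1/2})$ is the uniform measure $1/P(B_{1/2})$, so the minimality of $E_Q$ gives
\begin{equation*}
\dP(E_Q)\,\le\, Q^2\bigl(\Il^{\partial B_{1/2}}(1/P(B_{1/2}))-\Il^{\partial E_Q}(f)\bigr).
\end{equation*}
Setting $\bar f:=1/P(E_Q)$, a direct computation bounds $\Il^{\partial B_{1/2}}(1/P(B_{1/2}))-\Il^{\partial B_{1/2}}(\bar f)$ by $C\dP(E_Q)$, while Proposition \ref{stablog} applied to $f$ yields
\begin{equation*}
\Il^{\partial E_Q}(f)-\Il^{\partial B_{1/2}}(\bar f)\,\ge\,-C\LinfE{f}^2\,\dP(E_Q).
\end{equation*}
Chaining these inequalities gives $\dP(E_Q)\le CQ^2\,\dP(E_Q)$, which forces $\dP(E_Q)=0$, hence $E_Q=B_{1/2}$, as soon as $CQ^2<1$; undoing the scaling produces the threshold $\bar Q(\delta/\sqrt m)$.

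The main obstacle I expect is Step 2: ensuring that the infimum over $\Kd$ is actually attained by a connected set. Statement (ii) warns that the energy on $\Kd$ can be $-\infty$ (two balls pushed apart), so one cannot bypass the quantitative isoperimetric inequality — one really has to show that the ball-comparison bound on $\dP(E_Q)$ plus the $\delta$-ball condition rule out disconnected configurations for small $Q$. Everything else is a close adaptation of the Coulombic argument, since the $L^\infty$ bound on $f$ from property (iv) and the stability estimate of Proposition \ref{stablog} play precisely the roles that Proposition \ref{regolarita} and Proposition \ref{mainstab} played in Theorem \ref{stabilitycore}.
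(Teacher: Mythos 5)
Your proposal is essentially the paper's own (the paper's proof is one sentence: ``Arguing as in the proof of Theorem~\ref{stability}''), and all the ingredient substitutions you list — Proposition~\ref{stablog} for Proposition~\ref{mainstab}, property (iv) for Proposition~\ref{regolarita}, property (iii) for Lemma~\ref{ball} — are exactly the intended ones. The final chain $\dP(E_Q)\le CQ^2\dP(E_Q)$ and the normalization to $m=|B_{1/2}|$ (so that the logarithmic kernel stays nonnegative on sets near $B_{1/2}$) are both correct and are what the authors have in mind.

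The one place where you correctly sense trouble but don't quite close the loop is the existence/connectedness step, and it is worth being precise about why: the argument you sketch is circular as written. To get the ball-comparison bound $\dP(E_Q)\le Q^2\Il(B_{1/2})$ you would need $\Il(E_Q)\ge 0$; to get that you would need a diameter bound on $E_Q$; and by Lemma~\ref{bound} a diameter bound requires connectedness in the first place. The way out is to work directly in $\Kdco$ (where property~(iii) already gives existence), then use Lemma~\ref{bound} to bound $\diam(E_Q)$ by a constant depending only on the (rescaled) $\delta$, which gives the crude lower bound $\Il(E_Q)\ge -\log\diam(E_Q)\ge -C(\delta)$ rather than $\Il(E_Q)\ge 0$. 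This still yields $\dP(E_Q)\le Q^2\bigl(\Il(B_{1/2})+C(\delta)\bigr)\to 0$, after which the quantitative isoperimetric inequality, Hausdorff convergence, and the eventual diameter bound $<1$ kick in as you describe. Over the full class $\Kd$, on the other hand, statement (ii) of the preceding proposition shows the infimum is actually $-\infty$ whenever $m>2\omega_d\delta^d$, so the corollary as literally stated can only hold either in $\Kdco$ or in the narrow range $\omega_d\delta^d\le m\le 2\omega_d\delta^d$ where $\Kd$-membership already forces connectedness; this is an imprecision in the paper's statement, not in your argument, but your writeup should make the restriction explicit rather than leaving it as an ``obstacle I expect.''
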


\begin{example}\label{acuti}\rm
In this example we show that if the boundary of a convex set is non-regular at a point $x$, then the optimal
measure for $K$ is not bounded at $x$. For simplicity we offer the example just in dimensions $2$ and $3$. It is
not difficult to extend  such an example in any dimension.
Let us start with the case $d=2$. Let $K\subset\R^2$ be a compact convex set  and let $\mu$ be the optimal measure
for $K$ in the sense of
 \eqref{eqQ}. Suppose that $x\in\partial K$ is not a regular point, that is the tangent cone of $K$ at $x$ spans
an $\gamma<\pi$. Let us denote such a cone by $C$. Up to a rotation and a translation of
 $K$ we can suppose that $x=0$ and that $C$ takes the form
 \[
  C=\{(x,y):0\ge y\ge \tan(\g)x\}.
 \]
 Let, as usual,
 $v$ be the potential of $K$ with respect to the logarithmic kernel, so that, in particular
 \[
  \begin{cases}
   -\Delta v=0\qquad &\text{on $\R^2\setminus K$}\\
   v=c &\text{on $\partial K$}.
  \end{cases}
\]
 Let us consider the function $u$ which, in polar coordinates takes the form
 \[
  u(r,\theta)=r^{\frac{\pi}{2\pi-\gamma}} \sin\left(\frac{\pi}{2\pi-\gamma} \theta\right).
 \]
 Then we can construct the barrier function $u_\eps$ as follows:
 \[
  u_\eps=c-\eps u,
 \]
where $\eps$ is a positive parameter that will be fixed later. Notice that $u_\eps$ is an harmonic function on
 $\R^2\setminus C$ which is constantly equal to $c$ on $\partial C$. 
 Since $v$ is a continuous function, we can choose a radius $R>0$ such that $v>c/2$ on $B(0,R)\cap
(\R^2\setminus C)$. By imposing
 \[
  u_\eps>v\qquad\text{on $\partial B(0,R)\cap (\R^2\setminus C) $},
 \]
 that is,
 \[
  \eps<\frac{c}{2\max_{\theta\in[0,2\pi-\gamma]}u(R,\theta)},
 \]
 we get, by the comparison principle between harmonic functions, that $u_\eps\ge v$ on $(\R^2\setminus C)\cap
B(0,R)$. Since
 $v(0)=u(0)=c$, this entails that
 
 %
   \[
     \lim_{y\to 0, y\not\in K} |\nabla v(y)|\ge|\nabla u(0)|.
   \]
 Moreover we have $|\nabla u(\rho,\theta)|=C(\gamma)\rho^{\frac{\pi}{2\pi-\gamma}-1}$
 which
 is finite in $0$ only if $\gamma\ge\pi$. We conclude thanks to Proposition
 \ref{regolarita}  that $\mu=|\nabla v|\mathcal H^{1}\res\partial
 K$ holds.
 
 To deal with the case $d=3$ we simply notice that if $\partial K$ is not regular at a point $x\in\partial K$,
where $K$ is now a convex set contained in $\R^3$, then there exist two tangent planes intersecting at $x$ which
divide $\R^3$ into two conical components of the form $C'=C\times \R$, and $\R^3\setminus C'$, being $C$
a cone of $\R^2$, and such that $K\subseteq C'$. Thus, by considering the function which in cylindric
coordinates takes the form
\[
 u(\rho,\theta,z)=r^{\frac{\pi}{2\pi-\gamma}} \sin\left(\frac{\pi}{2\pi-\gamma} \theta\right).
\]
and as before, $u_\e=c-\e u$, since such a function is harmonic in $\R^3\setminus C'$ and equals $v$ on $x$,  we
can repeat an analogous argument to that performed in the two dimensional case to show that $\infty=|\nabla
u_\e(x)|\le |\nabla v(x)|$, being $v$ the (Coulombic) potential of the set $K$.
 \end{example}

%


\section*{Acknowledgments}
We thank C. Muratov for drawing our attention to part 
of the physics litterature. M. Goldman was supported by 
a Von Humboldt post-doc fellowship. 
B. Ruffini was partially supported by the project ANR-12-BS01-
0014-01 Geometrya
and M. Novaga was partially supported by the PRIN
2010-2011 project ``Calculus of Variations''. 
The  authors wish to thanks the hospitality of the
Max Planck Institut f\"ur Mathematik in Leipzig, where this work was started.

\end{document}